\newtheorem{theorem}{Theorem}[section]
\newtheorem{lemma}[theorem]{Lemma}
\newtheorem{proposition}[theorem]{Proposition}
\newtheorem{corollary}[theorem]{Corollary}
\def\bb #1{ {\mathbb #1} }
\def\c #1{ {\mathcal #1} }
\def\f #1{ {\mathfrak #1} }
\def\b #1{ {\bf #1} }
\newcommand\SkH { {\c S_k} }
\newcommand\oSkH { {\overline{\c S}_k} }
\newcommand{\uu}{ {\underline{u}} }
\newcommand{\ui}{ {\underline i} }
\newcommand{\uj}{ {\underline j} }
\newcommand{\bj}{ {\bf{j}} }
\newcommand{\oO}{ {\overline{\c O}} }
\newcommand{\oC}{ {\overline{\c C}} }
\newcommand{\oD}{ {\overline{D}} }
\newcommand{\oG}{ {\overline{G}} }
\newcommand{\tG}{ {\widetilde{G}} }
\DeclareMathOperator {\Teich}{Teich}
\DeclareMathOperator {\Sym}{Sym}
\DeclareMathOperator {\ord}{ord}
\DeclareMathOperator {\Fil}{Fil}
\DeclareMathOperator {\Frob}{Frob}
\DeclareMathOperator {\Tr}{Tr}
\DeclareMathOperator {\Kl}{Kl}
\begin{document}
\title{Symmetric Power $L$-functions of the hyper-Kloosterman Family}

\author{C. Douglas Haessig\footnote{This work was partially supported by a grant from the Simons Foundation \#314961.} \  and Steven Sperber}

\date{\today}
\maketitle
\begin{abstract}
The symmetric power $L$-function of the hyper-Kloosterman family is a rational function over the integers. Its degree and complex absolute values of its zeros and poles are now known through the work of Fu and Wan. The purpose of this paper is to study the $p$-adic absolute value of these zeros and poles. In particular, we give a uniform lower bound, independent of the symmetric power, of the $q$-adic Newton polygon of this $L$-function under suitable conditions. We also give similar results for any other linear algebra operation of the hyper-Kloosterman family, such as tensor, exterior, symmetric powers, or combinations thereof.
\end{abstract}

\tableofcontents

\section{Introduction}

Hyper-Kloosterman sums are exponential sums associated to the $n$-variable Laurent polynomial $x_1 + \cdots + x_n + \frac{t}{x_1 \cdots x_n}$. Here, $t$ is a parameter, and as it varies over different values gives us a family of exponential sums. There are various ways to study this family as a whole. One such way is the symmetric power $L$-function, a study initiated by Robba \cite{Robba-SymmetricPowersof-1986} in the one-variable Kloosterman case ($n = 1$) using $p$-adic methods, and later by Fu and Wan \cite{MR2194679, FuWan-$L$-functionssymmetricproducts-2005, FuWan-TrivialfactorsL-functions-} for general $n$ using $\ell$-adic methods. The authors of the present paper studied generalized Kloosterman sums \cite{MR3572279}, which includes the hyper-Kloosterman family as a special case, and $L$-functions associated to linear algebraic operations such as tensor, exterior, and symmetric powers. Missing from this body of work is (1) a $p$-adic cohomological description of the symmetric power $L$-function of the hyper-Kloosterman family, and (2) $p$-adic estimates of its roots. This paper answers the first, and under a suitable  hypothesis on the connection also gives $p$-adic estimates. The latter generalizes the first author's $p$-adic estimates  \cite{MR3694643, MR4314031} for the one-variable Kloosterman sum. Let us describe this more precisely.

Throughout the paper we fix a non-trivial additive character $\Theta$ on the finite field $\bb F_q$, and let $q = p^a$ with $p$ any prime number. Let $\bar t \in \overline{\bb F}_q^*$ and denote its degree by $\deg(\bar t) := [\bb F_q(\bar t) : \bb F_q]$. Set $q_{\bar t} := q^{\deg(\bar t)}$ and write $\bb F_{q_{\bar t}} := \bb F_q(\bar t)$. We denote by $\bb F_{q_{\bar t}^m}$ the degree $m$ extension field of $\bb F_{q_{\bar t}}$. We extend the additive character to $\bb F_{q_{\bar t}}$ by setting $\Theta_{\bar t} := \Theta \circ \Tr_{\bb F_{q_{\bar t}} / \bb F_q}$. For each positive integer $m$, define the hyper-Kloosterman  exponential sum by
\[
\Kl_n(\bar t, m) := \sum_{\bar x \in (\bb F_{q_{\bar t}^m} )^n} \Theta_{\bar t} \circ \Tr_{\bb F_{q_{\bar t}^m} / \bb F_{q_{\bar t}}}( \bar x_1 + \cdots + \bar x_n + \frac{\bar t}{\bar x_1 \cdots \bar x_n}).
\]
It is well-known that its associated $L$-function
\[
L(\Kl_n / \bb F_q, \bar t, T) := \exp\left( \sum_{m=1}^\infty \Kl_n(\bar t, m) \frac{T^m}{m} \right)
\]
can be written in the form
\[
L(\Kl_n / \bb F_q, \bar t, T)^{(-1)^{n+1}} =  (1 - \pi_0(\bar t) T) \cdots (1 - \pi_n(\bar t) T) \in \bb Z[\zeta_p][T].
\]
A lot is known about these roots \cite{AS-hyperKloo-revisited, MR463174, MR558257}. For example, they are all $q_{\bar{t}}$-Weil numbers meaning their complex absolute values, as well as all of their Galois conjugates, equal $q_{\bar{t}}^{n/2}$. Also, if $\ell \neq p$ is a prime number then each $\pi_i(\bar{t})$ is an $\ell$-adic unit, and $p$-adically they may be ordered so that  $\ord_{q_{\bar{t}}} \pi_i(\bar t) = i$ for each $i = 0, 1, \ldots, n$. 

For $k$ a positive integer, define the $k$-th symmetric power $L$-function of the hyper-Kloosterman family by 
\[
L(\Sym^k \Kl_n / \bb F_q, T) := \prod_{\bar t \in | \bb G_m^n / \bb F_q |} \prod_{i_0 + \cdots + i_n = k} \frac{1}{1 - \pi_0(\bar t)^{i_0} \cdots \pi_n(\bar t)^{i_n} T^{\deg(\bar t)}},
\]
where the first product runs over all closed points of the algebraic torus defined over $\bb F_q$ and the second product has each $i_j \geq 0$. Using $\ell$-adic techniques, Fu-Wan \cite{MR2194679, FuWan-TrivialfactorsL-functions-} proved that this $L$-function is a rational function over $\bb Z$ which takes the form
\[
L(\Sym^k \Kl_n / \bb F_q, T) = P(n, k, T) \cdot M(n, k, T),
\]
where $P(n, k, T)$ is a rational function over $\bb Z$, the so-called trivial factor, whose reciprocal zeros and poles are all powers of $q$ (see Section \ref{S: trivial factor} below for its precise description), and $M(n, k, T)$ is a polynomial with $\bb Z$-coefficients whose reciprocal roots are $q^{kn+1}$-Weil numbers; that is, algebraic integers all of whose Galois conjugates have complex absolute value $q^{(kn+1)/2}$. In most cases, the $L$-function is a polynomial. In fact, only when $p$, $n$ and $k$ are all even will the $L$-function in reduced form have a denominator that is precisely $1 - q^{(kn+2)/2} T$.

The main results of this paper are to compute the associated $p$-adic cohomology spaces of $L(\Sym^k \Kl_n / \bb F_q, T)$, demonstrate that cohomology is acyclic under suitable conditions, and study the action of Frobenius on cohomology to obtain $p$-adic estimates of the roots of the $L$-function. To summarize these results, suppose $\gcd(n+1, p) = 1$, let $\zeta_{n+1}$ be a primitive $(n+1)$-root of unity in $\overline{\bb F}_p$, and define
\[
d_k(n, p) := \# \{ (i_0, \ldots, i_n) \in \bb Z_{\geq 0}^{n+1} \mid i_0 + \cdots + i_n = k \text{ and }  \sum_{j=0}^n i_j \zeta_{n+1}^j \equiv 0 \text{ in } \overline{\bb F}_p \}.
\]
Next, set $R(T) := \sum T^{i_1 + 2 i_2 + \cdots + n i_n}$ where the sum runs over all $(i_0, \ldots, i_n) \in \bb Z_{\geq 0}^{n+1}$ satisfying $i_0 + \cdots + i_n = k$. Write $R(T) / (1 + T + \cdots + T^n) = \sum_{i=0}^\infty h_i(n, k) T^i$. 

\begin{theorem}\label{T: main thm}
Suppose $d_k(n, p) = 0$. Then $L(\Sym^k \Kl_n / \bb F_q, T)$ is a polynomial in $1 + T \bb Z[T]$ of degree $\frac{1}{n+1} \binom{n+k}{n}$ whose $q$-adic Newton polygon lies on or above the $q$-adic Newton polygon of $\prod (1 - q^i T)^{h_i(n, k)}$.
\end{theorem}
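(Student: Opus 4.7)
The strategy is to realize $L(\Sym^k\Kl_n/\bb F_q, T)$ via $p$-adic Dwork cohomology on $\bb G_m$, use the hypothesis $d_k(n,p)=0$ to establish acyclicity of the relevant complex, and then read off both the dimension of the surviving cohomology and the Newton polygon bound from that single space. To begin, attach to the hyper-Kloosterman family a rank $(n+1)$ overconvergent $F$-isocrystal $\c K$ on $\bb G_m/\bb F_q$ whose fiber at $\bar t$ has reciprocal characteristic polynomial $\prod_{i=0}^n(1-\pi_i(\bar t)T)$; this $\c K$ arises from Dwork's construction applied to the Laurent polynomial $x_1+\cdots+x_n+t/(x_1\cdots x_n)$ together with a fixed splitting function for $\Theta$. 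By definition of $L(\Sym^k\Kl_n/\bb F_q, T)$ and the Lefschetz trace formula,
\[
L(\Sym^k\Kl_n/\bb F_q, T) = \prod_{i=0}^{2}\det\bigl(1-T\Frob \mid H^i_c(\bb G_m, \Sym^k \c K)\bigr)^{(-1)^{i+1}},
\]
so the problem reduces to an explicit description of the three cohomology spaces on the right through an overconvergent Koszul complex.

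Next, build the Dwork complex for $\Sym^k\c K$. The standard device realizes $\Sym^k \c K$ as the $S_k$-invariant part of the toric exponential sum attached to $k$ disjoint copies of the Laurent polynomial above sharing the common parameter $t$; the associated twisted de Rham complex is a Koszul-type complex whose differentials are logarithmic derivations. Under the hypothesis $d_k(n,p)=0$ this Koszul complex is acyclic outside the middle degree, so $H^0_c=H^2_c=0$ and $L(\Sym^k\Kl_n/\bb F_q, T)=\det(1-T\Frob\mid H^1_c)$ is a polynomial with constant term $1$; integrality of coefficients follows from Fu--Wan. The quantity $d_k(n,p)$ appears naturally as the dimension over $\overline{\bb F}_p$ of the kernel of the multiplication-by-$\sum_j \zeta_{n+1}^j i_j$ operator on the monomial basis indexed by tuples $(i_0,\ldots,i_n)$ with $\sum_j i_j=k$; its vanishing is precisely what makes the Koszul complex split cleanly. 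Counting the surviving monomials modulo the $t$-derivative relation, which corresponds to the $\bb Z/(n+1)$-cyclic shift $\pi_i\mapsto \pi_{i+1}$ on the roots, then gives $\dim H^1_c = \frac{1}{n+1}\binom{n+k}{n}$, the claimed degree.

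For the Newton polygon bound, filter the middle cohomology by the weight $\sum_j j\cdot i_j$ on monomials. This is exactly the grading whose generating function is $R(T)$, and quotienting by the $\bb Z/(n+1)$-cyclic action corresponds to dividing by $1+T+\cdots+T^n$, so the graded dimensions are the $h_i(n,k)$. The Hodge--Newton inequality of Adolphson--Sperber, applied to the explicit splitting function for $\Theta$, then forces Frobenius on the $i$-th graded piece to be divisible by $q^i$. Hence the $q$-adic Newton polygon of $\det(1-T\Frob\mid H^1_c)$ lies on or above the $q$-adic Newton polygon of $\prod_i (1-q^iT)^{h_i(n,k)}$, as asserted.

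The main obstacle is the acyclicity step: producing the correct Dwork complex for $\Sym^k\c K$ and proving that its cohomology vanishes outside the middle degree exactly when $d_k(n,p)=0$. The naive $nk$-variable Dwork complex is not non-degenerate for all $(n,k,p)$, and one must keep careful track simultaneously of the $S_k$-invariants coming from the symmetric power and of the $\bb Z/(n+1)$-Galois symmetry that permutes the roots $\pi_i(\bar t)$; identifying the precise obstruction to acyclicity with the arithmetic quantity $d_k(n,p)$ is the heart of the argument.
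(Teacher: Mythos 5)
Your skeleton matches the paper's in outline (a $p$-adic cohomological formula over $\bb G_m$, vanishing of the denominator term under $d_k(n,p)=0$ via the quantities $\sum_j i_j\zeta_{n+1}^j$, dimension $\frac{1}{n+1}\binom{n+k}{n}$, and a weight filtration with generating function $R(T)/(1+T+\cdots+T^n)$), but both load-bearing steps are missing. First, acyclicity: you propose to realize $\Sym^k\c K$ as the $S_k$-invariant part of a $k$-fold ($nk$-variable) fiber-product Dwork complex, note yourself that this complex is degenerate in general, and then simply assert that $d_k(n,p)=0$ ``makes the Koszul complex split cleanly.'' That is the step you would actually have to prove, and the paper avoids the fiber-product construction altogether: it takes $\Sym^k$ of the rank-$(n+1)$ relative cohomology module with its Gauss--Manin connection, reduces modulo the uniformizer, and observes that the reduced connection matrix $\oG$ is a companion-type matrix with eigenvalues $\zeta_{n+1}^j t^{1/(n+1)}$, so that on $\Sym^k$ the eigenvalues are $\bigl(\sum_j i_j\zeta_{n+1}^j\bigr)t^{1/(n+1)}$; the hypothesis $d_k(n,p)=0$ is precisely injectivity of $\nabla_{\oG}$, hence $H^0=0$, and the $H^1$ dimension then comes from the Poincar\'e-series identity $Q(T)=(1-T)R(T)/(1-T^{n+1})$ evaluated at $T=1$. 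Your heuristic that the division by $1+T+\cdots+T^n$ is ``quotienting by the $\bb Z/(n+1)$-cyclic shift on the roots'' is not what happens (the factor arises as $(1-T^{n+1})/(1-T)$ from the Euler characteristic of an injective weight-one map), and ``counting surviving monomials modulo the $t$-derivative relation'' is not a proof of the degree formula.

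Second, and more seriously, the Newton-polygon bound. You claim the Adolphson--Sperber Hodge--Newton inequality, ``applied to the explicit splitting function,'' forces Frobenius on the $i$-th graded piece to be divisible by $q^i$. There is no such off-the-shelf statement for $H^1$ of $\bb G_m$ with coefficients in the symmetric power of a family: Frobenius does not preserve the weight filtration, and the Adolphson--Sperber results concern toric sums for a single Laurent polynomial, not the parameter cohomology of $\Sym^k$ of a local system twisted by $\psi_t$. Establishing the needed estimate is the technical core of the paper: one proves matrix estimates for the relative Frobenius ($\tilde A(j,i)\in\c O_p(\frac{n+1}{p};j)$), propagates them to $\Sym^k$, and runs a filtration-compatible reduction algorithm in symmetric-power cohomology that carefully tracks the correction terms of order $p^{p^j-1}$ separating the truncated operators $\partial^{(1)}, D^{(1)}$ from $\partial, D$; only then does one get $\ord_p B(\uj,\ui)\ge w(\uj)$ for the Frobenius matrix in the constant basis, after which Dwork's classical minor argument yields the polygon bound. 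As written, your final paragraph asserts the conclusion rather than deriving it, so the proposal has genuine gaps at both the acyclicity step (which you acknowledge) and the Frobenius-estimate step (which you treat as routine but is not).
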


In the case $n = 1$, the classical 1-variable Kloosterman family, Fres\'an-Sabbah-Yu \cite{Fresan2018HodgeTO} proved that the $h_i$ above equal Hodge numbers coming from a related exponential mixed Hodge structure. We expect that the $h_i$ for general $n$ will have a similar interpretation, and thus Theorem \ref{T: main thm} may be thought of as a continuation of the mantra ``Newton over Hodge''. 

Symmetric power $L$-functions of the hyper-Kloosterman family were first studied in a series of papers by Fu and Wan \cite{MR2194679, FuWan-$L$-functionssymmetricproducts-2005, MR2425148, FuWan-TrivialfactorsL-functions-}, where they calculate its degree, compute its trivial factors, and give a motivic interpretation. The current authors studied in \cite{MR3572279} such $L$-functions but for more general families which include the hyper-Kloosterman as a special case.

For symmetric powers of the classical 1-variable Kloosterman sums, more is known. These were first studied by Robba \cite{Robba-SymmetricPowersof-1986} who proved among other things that these are always polynomials, and gave a conjectural degree. Fu-Wan \cite{MR2194679} proved a corrected formula for the degree and asked whether there exists a uniform quadratic lower bound for the Newton polygon of the $L$-function. This was shown to be true independently by \cite{Fresan2018HodgeTO} and \cite{MR3694643, MR4314031}. For $k$ small, Yun gave an automorphic interpretation of $L(\Sym^k \Kl_1 / \bb F_q, T)$. In the other direction, when $p$ is small, the following modular interpretation \cite{MR4314031} holds for all $k$: for $p = 2$ let $\Gamma$ to be the congruence subgroup $\Gamma := \Gamma_1(4)$, and for $p = 3$ set $\Gamma := \Gamma_1(3)$. Then for every $k \geq 1$,
\[
L(\Sym^k \Kl_1, s) = (1 - s) \det(1 - s U_p \mid S_{k+2}(\Gamma)),
\]
where $U_p$ is the Atkin $U$-operator acting on cusp forms $S_{k+2}(\Gamma)$ of weight $k+2$. It was also shown that, for $p = 2$ and $k$ odd, the Newton polygon equals the lower bound in Theorem \ref{T: main thm}. However, equality is not expected in general. 

The methods in this paper are not limited to symmetric products, and may be easily adjusted to suit other linear algebra operations such as tensor, symmetric, and exterior products, or combinations thereof. Let us describe how to obtain such a result. Let $V$ be a vector space of dimension $n+1$ with basis $\{ v_0, \ldots, v_n\}$. Let $\c L$ be a linear algebra operation. This means $\c L V$ is a quotient space of $V^{\otimes m}$ for some $m$. When needed for clarity, we will write $m= m_{\c L}$ and refer to $m_{\c L}$ as the total degree of $\c L V$. For instance, if $\c L = \Sym^k \otimes \wedge^\ell$, then $\c L V = \Sym^k V \otimes \wedge^\ell V$ is a quotient space of $V^{\otimes m_{\c L}}$ with $m_{\c L} = k + \ell$. For $\c L$ a linear algebra operation, $\c LV$ will be a vector space with basis $\c B_{\c L} = \{ v_{j_1} \otimes \cdots \otimes v_{j_m} \in \c L V \mid (j_1, \ldots, j_m) \in J \}$ for some index set $J \subseteq \bb Z_{\geq 0}^m$. We define the $\c L$-product $L$-function of the hyper-Kloosterman family over $\bb F_q$ by
\[
L(\c L \Kl_n / \bb F_q, T) := \prod_{\bar t \in | \bb G_m^n / \bb F_q |} \prod_{(j_1, \ldots, j_m) \in J} \frac{1}{1 - \pi_{j_1}(\bar t) \cdots \pi_{j_m}(\bar t) T^{\deg(\bar t)}}.
\]
Define
\[
d(\c L, n, p) := \{ (j_1, \ldots, j_m) \in J \mid \sum_{i = 1}^m \zeta_{n+1}^{j_i} = 0 \text{ in } \overline{\bb F}_p \}.
\]
Set $R_{\c L}(T) := \sum T^{j_1 + j_2 + \cdots + j_m}$ where the sum runs over all $(j_1, \ldots, j_m) \in J$. Write $R_{\c L}(T) / (1 + T + \cdots + T^n) = \sum_{i=0}^\infty h_i(\c L, n) T^i$.

\begin{theorem}
Suppose $d(\c L, n, p) = 0$. Then $L(\c L \Kl_n / \bb F_q, T)$ is a polynomial of degree $\frac{1}{n+1} \# J$ whose $q$-adic Newton polygon lies on or above the $q$-adic Newton polygon of $\prod (1 - q^i T)^{h_i(\c L, n)}$.
\end{theorem}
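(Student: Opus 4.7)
The plan is to run the same cohomological machinery used for Theorem \ref{T: main thm} and replace the symmetric power index set $\{(i_0, \ldots, i_n) : \sum_j i_j = k\}$ by the more general set $J$ throughout. The relative cohomology of the hyper-Kloosterman family furnishes an overconvergent $F$-isocrystal $\c H$ on $\bb G_{m, \bb F_q}$ of rank $n+1$ whose reciprocal Frobenius eigenvalues at a closed point $\bar t$ are $\pi_0(\bar t), \ldots, \pi_n(\bar t)$. Applying the linear algebra functor $\c L$ yields an overconvergent $F$-isocrystal $\c L \c H$ of rank $\# J$ whose eigenvalues at $\bar t$ are the products $\pi_{j_1}(\bar t) \cdots \pi_{j_m}(\bar t)$ for $(j_1, \ldots, j_m) \in J$, and the Lefschetz trace formula identifies $L(\c L \Kl_n / \bb F_q, T)$ with the alternating product of the characteristic polynomials of Frobenius acting on the compactly supported rigid cohomology of $\c L \c H$ on $\bb G_m$.

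Next, I would compute these cohomology spaces via a Dwork-style complex built from the basis $\c B_{\c L}$ of $\c L V$. The weight of each basis vector $v_{j_1} \otimes \cdots \otimes v_{j_m}$ is $j_1 + \cdots + j_m$, and $R_{\c L}(T)$ is precisely the generating function of these weights; the division by $1 + T + \cdots + T^n$ reflects the $(n+1)$-fold symmetry generated by the monomial $x_1 \cdots x_n$ in the Laurent polynomial $x_1 + \cdots + x_n + t/(x_1 \cdots x_n)$. The combinatorial hypothesis $d(\c L, n, p) = 0$ is the direct analogue of $d_k(n, p) = 0$: it ensures that no tuple in $J$ produces a nontrivial residue under the $\mu_{n+1}$-symmetry, and therefore $H^0$ and $H^2$ vanish. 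Consequently $L(\c L \Kl_n / \bb F_q, T)$ equals the characteristic polynomial of Frobenius on $H^1$, a polynomial of degree $\tfrac{1}{n+1} \# J$, where the factor $\tfrac{1}{n+1}$ comes from the $\mu_{n+1}$-quotient encoded by $1 + T + \cdots + T^n$.

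Finally, Newton-over-Hodge---the standard $p$-adic bound that the Newton polygon of Frobenius on a Dwork complex lies on or above the associated Hodge polygon---upgrades the dimension count to the desired slope estimate. The Hodge filtration has jump $i$ with multiplicity equal to the coefficient of $T^i$ in $R_{\c L}(T)/(1 + T + \cdots + T^n)$, namely $h_i(\c L, n)$, so the associated Hodge polygon is exactly the $q$-adic Newton polygon of $\prod (1 - q^i T)^{h_i(\c L, n)}$. The main obstacle is verifying that the vanishing of $H^0$ and $H^2$ carries through for an arbitrary $\c L$: one must isolate precisely where the symmetric-power indexing was used in the proof of Theorem \ref{T: main thm} and confirm that the only combinatorial input at the singular fibres $t = 0$ and $t = \infty$ is controlled by the non-vanishing of $\sum_i \zeta_{n+1}^{j_i}$ in $\overline{\bb F}_p$. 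Once this point is checked, the remaining steps---constructing the Dwork complex, reading off the Hodge numbers, and invoking Newton-over-Hodge---go through mutatis mutandis.
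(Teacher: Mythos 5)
Your plan is essentially the paper's own route: the theorem is obtained by rerunning the $\Sym^k$ machinery with the index set $J$ in place of $I_k$, and the hypothesis $d(\c L, n, p) = 0$ enters exactly where you flagged it, namely in the analogue of Corollary \ref{C: G inj}, where the Leibniz extension of $\oG$ to $\c L$ of the relative cohomology is diagonalized over $\overline{\bb F}_q(t^{1/(n+1)})$ with eigenvalues $\bigl(\sum_{i=1}^m \zeta_{n+1}^{j_i}\bigr) t^{1/(n+1)}$ on the image of the eigen-tensor basis, so the connection is injective, the relevant $H^0$ vanishes (the paper's base complex has only $H^0$ and $H^1$, not an $H^2$), and the Poincar\'e-series count gives degree $\frac{1}{n+1}\#J$. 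The only caveat is one of framing: the paper does not invoke rigid cohomology or an off-the-shelf ``Newton over Hodge'' theorem --- the slope bound is the output of the explicit chain-level estimates (Theorem \ref{T: main rel cohom reduction}, Theorem \ref{T: sym power cohom}, Theorem \ref{T: Sym Frob Estimates}) fed into Dwork's argument, and these carry over verbatim because the basis vectors of $\c L V$ are monomials in $e_0, \ldots, e_n$, so the multiplicativity argument of Lemma \ref{L: Sym Frob} applies unchanged with weight $w = j_1 + \cdots + j_m$.
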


For example, this result holds for all linear algebra operations $\c L$ on the classical 1-variable Kloosterman family ($n = 1$) when $m_{\c L}$ is odd and $p > m_{\c L}$.

Last, we take note of some references that studied other families aside from the hyper-Kloosterman. For general multi-parameter families see \cite{MR3239170, MR3572279}; for moment $L$-functions of Calabi-Yau hypersurfaces see \cite{MR2385246}; for the Airy family \cite{MR2542216, MR2873140, SabbahYu-Airy}; and for the Legendre family of elliptic curves see \cite{Adolphson-$p$-adictheoryof-1976, Dwork-Heckepolynomials-1971, Haessig-EllCurve}.

{\bf Acknowledgements.} We thank Andy O'Desky, Vic Reiner, and Dennis Stanton for some very helpful discussions.

\section{The Trivial Factor $P(n, k, T)$}\label{S: trivial factor}

In this section we recall the results of Fu and Wan in \cite{MR2194679, FuWan-TrivialfactorsL-functions-} determining the trivial factor $P(n, k, T)$.  This factor may be broken down into three parts:
\[
P(n, k, T) = \frac{A_0(n, k, T) A_\infty(n, k, T)}{B(n, k, T)},
\]
with each factor on the right being described in a moment. Before giving their description, we point out that $P(n, k, T)$ is a polynomial except when $p$, $k$, and $n$ are all even, and in this case, the denominator of $P(n, k, T)$, after reduction, is a single linear term of the form $(1 - q^{(kn + 2)/2} T)$.

Define the non-negative integers $m_k(i)$ by the expansion
\[
\frac{(1 - x^{n+1}) \cdots (1 - x^{n+k-1})(1 - x^{n+k})}{(1 - x^2) \cdots (1 - x^{k-1})(1 - x^k)} = \sum_{i = 0}^\infty m_k(i) x^i.
\] 
Using this, define the factor $A_0(n, k, T)$ ``at the origin'' by

\[
A_0(n, k, T) := \prod_{i=0}^{\lfloor \frac{kn}{2} \rfloor} (1 - q^i T)^{m_k(i)}.
\] 
This factor was computed in \cite[Theorem 0.1]{FuWan-TrivialfactorsL-functions-}. 

 The  factor $A_\infty(n, k, T)$  was computed in \cite[Theorem 2.5]{MR2194679} and takes various forms depending on the parity of $n$ and $k$. We will use notation from \cite[Section 2]{MR2194679} however, note that we use a different definition of $n$ in our paper; our $n$ corresponds to $n+1$ in their paper. Let $\zeta$ be a primitive $(n+1)$-th root of unity in $\overline{\bb F}_q$. Set
\[
S_k(n, p) := \{ \ui := (i_0, i_1, \ldots, i_n) \in \bb Z_{\geq 0}^{n+1} \mid i_0 + \cdots + i_n = k, \text{ and } i_0 + i_1 \zeta + \cdots + i_n \zeta^n \equiv 0 \text{ in } \overline{\bb F}_q \}.
\]
Observe that the shift operator $\sigma$ defined by
\[
\sigma(i_0, i_1, \ldots, i_n) := (i_n, i_0, i_1, \ldots, i_{n-1})
\]
is stable on $S_k(n, p)$. Denote by $a_k(n, p)$ the number of $\sigma$-orbits in $S_k(n, p)$. Then
\[
A_\infty(n, k, T) = 
\begin{cases}
1 & \text{if $n$ is odd and $k$ odd}  \\
(1 - q^{kn/2} T)^{a_k(n, p)} & \text{if $n$ is even, and any $k$}.
\end{cases}
\]
Suppose now that $n$ is odd and $k$ is even. Let $V$ be a vector space over a field of characteristic zero with basis $\{e_0, \ldots, e_n\}$. Denote by $\Sym^k V$ the $k$-th symmetric power vector space of $V$. Using the multi-index notation $e^\ui = e_0^{i_0} \cdots e_n^{i_n}$ with $\ui = (i_0, \ldots, i_n) \in \bb Z_{\geq 0}^{n+1}$, a basis for this space is $\{ e^\ui \mid i_0 + \cdots + i_n = k \}$. Let $\f o$ be an orbit in $S_k(n, p)$, and $\ui \in \f o$. Define
\[
v_{\f o} := v_\ui := \sum_{\ell = 0}^n (-1)^{i_n + \cdots + i_{n - \ell}} e^{\sigma^\ell(\ui)}.
\]
Since $k$ is even, $v_{\sigma \ui} = (-1)^{i_n} v_\ui$ and thus $v_{\f o}$ is well-defined (it only depends on the orbit, not on the choice of $\ui$). Let $b_k(n, p)$ denote the number of orbits $\f o$ such that $v_{\f o} \not= 0$. Define the weight $w(\ui) := i_1 + 2 i_2 + \cdots + n i_n$. Observe that since $n+1$ and $k$ are even, then if $w(\ui)$ is odd then $w(\sigma \ui)$ is odd, and thus all elements in an orbit have odd weight, or all even weights. We say an orbit $\f o$ is odd if $w(\ui)$ is odd for some, and hence all, $\ui \in \f o$. Let $c_k(n, p)$ denote the number of odd orbits $\f o$ such that $v_{\f o} \not= 0$. Then, in the case of both $n$ is odd and $k$ is even,
\begin{align*}
A_\infty&(n, k, T) \\
&=
\begin{cases}
(1 - q^{kn/2} T)^{b_k(n, p)} & \text{if } 2(n+1) \mid (q-1) \\
(1 + q^{kn/2} T)^{c_k(n, p)}(1 - q^{kn/2} T)^{b_k(n, p) - c_k(n, p)} & \text{if } 2(n+1) \nmid (q-1) \text{ and either } 4 \mid (n+1) \text{ or } 4 \mid k \\
(1 - q^{kn/2} T)^{c_k(n, p)}(1 + q^{kn/2} T)^{b_k(n, p) - c_k(n, p)} & \text{if } 2(n+1) \nmid (q-1), 4 \mid (n+1) \text{ and } 4 \mid k.
\end{cases}
\end{align*}

The factor $B(n, k, T)$ was computed in \cite[Theorem 0.3]{FuWan-TrivialfactorsL-functions-} and equals 1 in most cases: 
\[
B(n, k, T) =
\begin{cases}
(1 - q^{kn/2} T)(1 - q^{(kn+2)/2} T) & \text{if $p$, $k$, and $n$ are all even} \\
1 & \text{otherwise}.
\end{cases}
\]
Notice that the factor $(1 - q^{kn/2} T)$ will cancel with the identical term which appears in $A_\infty(n, k, T)$ since $a_k(n, p) \geq 1$. On the other hand, the factor $(1 - q^{(kn+2)/2} T)$ in $B(n, k, T)$ cannot be cancelled in $L(\Sym^k \Kl_n / \bb F_q, T)$ since all the reciprocal roots in $M(n, k, T)$ have complex absolute value equal to $q^{(kn+1)/2}$, and no reciprocal root of $A_0$ or $A_\infty$ takes this value. Thus, the $L$-function in reduced form is indeed a rational function with one linear term in the denominator.

\section{Cohomological formula}\label{S: cohom formula}

In this section, we construct cohomology spaces $H^1$ and $H^0$ with Frobenius maps $\bar \beta_k$, and prove the following cohomological formula.

\begin{theorem}\label{T: cohom formula}
\[
L(\Sym^k \Kl_n / \bb F_q, T) = \frac{\det(1 - \bar \beta_k T \mid H^1)}{\det(1 - q \bar \beta_k T \mid H^0)}.
\]
\end{theorem}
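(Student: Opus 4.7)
The plan is to follow the Dwork--Monsky--Washnitzer framework for families of exponential sums over $\bb G_m$. The strategy is to build a relative overconvergent $F$-isocrystal on $\bb G_m / \bb F_q$ whose fibers compute $L(\Kl_n / \bb F_q, \bar t, T)$, to take its $k$-th symmetric power, and then to apply the trace formula on the one-dimensional base $\bb G_m$.

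Concretely, I would fix Dwork's splitting function $\theta$ and form the function $F(x, t) = \theta(x_1) \cdots \theta(x_n)\, \theta(t/(x_1 \cdots x_n))$ on a $p$-adic Banach space of overconvergent power series in $x_1, \ldots, x_n$ with coefficients in the dagger algebra $\c A^\dagger$ of overconvergent functions on $\bb G_m$ in the variable $t$. The Koszul complex attached to the Dwork connections along the fiber $x$-variables computes a relative cohomology concentrated in top degree that is free of rank $n+1$ over $\c A^\dagger$; call the resulting module $\c H$. It inherits a Gauss--Manin connection $\nabla_t$ and a Frobenius $\alpha$ whose characteristic polynomial at the Teichm\"uller lift of each $\bar t$ recovers $\prod_{i=0}^n (1 - \pi_i(\bar t) T)$. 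Passing to $\Sym^k \c H$ with the induced connection $\Sym^k \nabla_t$ and Frobenius $\beta_k := \Sym^k \alpha$ produces a free $\c A^\dagger$-module of rank $\binom{n+k}{n}$ whose fiberwise $L$-function at each closed point $\bar t$ matches the local factor of $L(\Sym^k \Kl_n / \bb F_q, T)$.

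Since $\bb G_m$ is a smooth affine curve, the cohomology of the $F$-isocrystal $(\Sym^k \c H, \Sym^k \nabla_t, \beta_k)$ is computed by the two-term de Rham complex $[\Sym^k \c H \xrightarrow{\Sym^k \nabla_t} \Sym^k \c H]$, with $H^0$ the kernel and $H^1$ the cokernel, and $\beta_k$ induces endomorphisms $\bar \beta_k$ on each. Applying the Dwork--Monsky--Washnitzer trace formula on $\bb G_m / \bb F_q$ and combining the local contributions yields
\[
L(\Sym^k \Kl_n / \bb F_q, T) = \frac{\det(1 - \bar \beta_k T \mid H^1)}{\det(1 - q \bar \beta_k T \mid H^0)};
\]
the factor of $q$ on the $H^0$ side is the Tate twist that arises from identifying $H^1$ as the cokernel of $\nabla_t \otimes dt : \c H \to \c H \otimes \Omega^1_{\bb G_m}$, equivalently from Poincar\'e duality between $H^0$ and $H^2_c$ on the curve.

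The main obstacle is the careful bookkeeping required in the overconvergent setting: one has to verify that $\Sym^k \c H$ is again overconvergent (not merely formal), that $\beta_k$ is completely continuous so that the trace formula applies, and that the Frobenius normalization on $H^0$ picks up exactly the displayed $q$-twist. A secondary technical point is establishing acyclicity of the relative Koszul complex in non-top degree for the hyper-Kloosterman polynomial, so that $\c H$ really is free of rank $n+1$ and no lower-degree fiberwise cohomology contributes spuriously to the formula.
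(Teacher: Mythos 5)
Your proposal follows essentially the same route as the paper: a relative Dwork cohomology of rank $n+1$ over the base $\bb G_m$, its $k$-th symmetric power with the induced connection and Frobenius, and then the Dwork trace formula on the base yielding the two-term complex whose $H^0$ and $H^1$ give the stated formula. The only points to make precise are that the completely continuous operator entering the trace formula is $\psi_t^a \circ \Sym^k(\bar\alpha_a)$ (the paper's $\beta_k$), not $\Sym^k \alpha$ alone, and that the $q$ on $H^0$ comes from the commutation relation $\beta_k \circ \partial_t = q\, \partial_t \circ \beta_k$ rather than from a duality argument.
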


The rest of this section is devoted to a proof of this result. Fix a prime $p$ and let $q = p^a$ where $a \geq 1$. Let $\bb Q_q$ be the unramified extension of $\bb Q_p$ of degree $a$ and denote by $\bb Z_q$ its ring of integers. Let $E(T) := \text{exp}(\sum_{j=0}^{\infty} \frac{T^{p^j}}{p^j}) \in (\mathbb{Q} \cap \mathbb{Z}_p)[[T]]$ be the Artin-Hasse series, and let $\gamma \in \overline{\mathbb{Q}}_p$ be a root of $\sum_{j=0}^{\infty} \frac{T^{p^j}}{p^j}$ such that $\ord_p \gamma = 1/(p-1)$. Fix a $q$-th root of $\gamma$, and denote by $\f O_q$ the ring of integers of $\bb Q_q(\gamma^{1/q})$; denote its residue field by $\bb F_q$.

Denote by $\Delta \subset \bb R^n$ the polytope defined by the convex hull of the standard basis $(1, 0, \ldots, 0)$ through $(0, \ldots, 0, 1)$, along with the point $(-1, -1, \ldots, -1)$; this is the Newton polytope of $x_1 + \cdots + x_n + t/(x_1 \cdots x_n)$ viewed as a Laurent polynomial in the variables $\{x_1, \cdots, x_n \}$. For $u = (u_1, \ldots, u_n) \in \bb Z^n$ define 
\begin{align*}
m(u) &:=  \max\{0, -u_1, \ldots, -u_n\} \\
w(u) &:= u_1 + \cdots + u_n + (n+1) m(u).
\end{align*}
We note that $w$ is the usual polyhedral weight function coming from $\Delta$. The following properties hold:

\begin{proposition}\label{P: w and m}
For every $u, v \in \bb Z^n$ and $r \in \bb Z_{\geq 0}$, we have
\begin{enumerate}
\item $w(u) = 0$ if and only if $u = 0$.
\item $w(ru) = r w(u)$. 
\item $w(u + v) \leq w(u) + w(v)$, with equality if and only if $u$ and $v$ are cofacial with respect to $\Delta$.
\item Properties 2 and 3 also hold for $m(u)$.
\end{enumerate}
\end{proposition}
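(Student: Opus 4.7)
The plan is to work directly from the definitions and exploit the fact that $w$ is the gauge function of $\Delta$: for $u \in \bb R^n$, $w(u)$ is the smallest $c \geq 0$ with $u \in c \Delta$. I would first record this by parametrizing $c\Delta$ in barycentric coordinates $(\lambda_0, \lambda_1, \ldots, \lambda_n)$ summing to $c$, assigning weight $\lambda_0$ to the vertex $-\mathbf{1}$ and $\lambda_j$ to $e_j$. A point $y \in c\Delta$ then has $y_j = \lambda_j - \lambda_0$, so $\lambda_0 = (c - \sum_j y_j)/(n+1)$ and the nonnegativity conditions $\lambda_i \geq 0$ reduce to $c - \sum_j y_j \geq (n+1)\max(0, -y_1, \ldots, -y_n)$. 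Minimizing $c$ recovers exactly $w$. This identification also describes the cofacial cones explicitly: the $n+1$ facets of $\Delta$ give rise to the nonnegative orthant $\{x : x_j \geq 0 \text{ for all } j\}$ (the cone over the facet opposite $-\mathbf{1}$) and, for each $j^\ast \in \{1,\ldots,n\}$, the cone $\{x : x_{j^\ast} \leq 0,\ x_{j^\ast} \leq x_j \text{ for all } j\}$ (opposite $e_{j^\ast}$).

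For (1), I would note that $m(u) \geq -u_j$ for every $j$ implies $\sum_j u_j \geq -n\,m(u)$, hence $w(u) \geq m(u) \geq 0$; equality $w(u) = 0$ forces $m(u) = 0$ and each $u_j = 0$, so $u = 0$. Property (2) and its $m$-analog follow immediately since $r \geq 0$ commutes with $\max$. For the subadditivity in (3), the coordinatewise bound $\max(0, -(u_j + v_j)) \leq \max(0, -u_j) + \max(0, -v_j) \leq m(u) + m(v)$ yields $m(u+v) \leq m(u) + m(v)$; since the linear part of $w$ is genuinely additive, this upgrades to $w(u+v) \leq w(u) + w(v)$. Property (4) for $m$ is therefore already contained in these arguments.

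The one step requiring real care is the cofacial characterization of equality. Since the linear part is additive, $w(u+v) = w(u) + w(v)$ if and only if $m(u+v) = m(u) + m(v)$, and I would split into two cases. If $m(u+v) = 0$, nonnegativity of $m$ forces $m(u) = m(v) = 0$, so both $u$ and $v$ lie in the nonnegative orthant, the cone over the facet opposite $-\mathbf{1}$. If $m(u+v) > 0$ and is attained at some index $j^\ast$, the inequalities $m(u) \geq -u_{j^\ast}$ and $m(v) \geq -v_{j^\ast}$ must be equalities, forcing the $j^\ast$-coordinate to be the (nonpositive) minimum in both $u$ and $v$; this places both in the cone over the facet opposite $e_{j^\ast}$. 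The converse, that $u,v$ in a common cofacial cone gives equality, is a short direct check in each of the $n+1$ cones. This case split is the only nonroutine step; everything else is bookkeeping against the definitions.
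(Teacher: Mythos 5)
Your proposal is correct. The paper in fact states Proposition~\ref{P: w and m} without proof, treating it as a standard property of the polyhedral weight function attached to $\Delta$, so there is no argument in the paper to compare against; what you give is the standard verification, carried out correctly. In particular, the barycentric-coordinate computation identifying $w$ as the gauge of $\Delta$, the descriptions of the $n+1$ facet cones, and the key equality analysis (reducing $w(u+v)=w(u)+w(v)$ to $m(u+v)=m(u)+m(v)$ and then splitting on whether $m(u+v)=0$ or the maximum is attained at some index $j^\ast$, forcing $-u_{j^\ast}=m(u)$ and $-v_{j^\ast}=m(v)$) are all sound, and the converse follows from the linearity of $w$ on each facet cone exactly as you indicate.
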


Dwork's splitting  function $\theta(T) :=  E(\gamma T) =\sum_{j=0}^{\infty} \theta_j T^j$ has coefficients in $\bb Z_p[\gamma]$ satisfying $\text{ord}_p(\theta_j) \geq \frac{j}{p-1}$. Define
\[
F(t, x) := \theta(x_1) \cdots \theta(x_n) \cdot \theta(t/(x_1 \cdots x_n)), 
\]
and note that we may write $F(t, x) = \sum_{u \in \bb Z^n} B(u) x^u$ with
\begin{equation}\label{E: Frob coeff est}
B(u) := \sum \theta_{m_1} \theta_{m_2} \cdots \theta_{m_n} \theta_l t^l,
\end{equation}
where the sum runs over all $(m_1, \ldots, m_n, l) \in \bb Z_{\geq 0}^{n+1}$ such that $l \geq m(u)$ and $m_i - l = u_i$ for $1 \leq i \leq n$.

Define the spaces
\[
\c O_{p^k} := \left\{ \xi := \sum_{r \geq 0} a(r) \gamma^{r (n+1) / p^k} t^r \mid a(r) \in \f O_q, a(r) \rightarrow 0 \text{ as } \ r \rightarrow \infty \right \},
\]
and
\[
\c C_{p^k} := \left\{ \zeta := \sum_{u \in \bb Z^n} \xi(u) \gamma^{w(u)} t^{p^k m(u)} x^u \mid \xi(u) \in \c O_{p^k}, \ \xi(u) \rightarrow 0 \text{ as } w(u) \rightarrow \infty \right \}.
\]
We will often write $\c O$ and $\c C$ when $k = 0$. With the sup-norms $\| \xi \| := \sup |a(r)|$ and $\| \zeta \| := \sup \| \xi(u) \|$ these are $p$-adic Banach modules over $\f O_q$ and $\c O_{p^k}$, respectively. The convexity of the functions $w(u)$ and $m(u)$ mentioned above in fact shows that the modules are in fact Banach algebras over $\f O_q$ and $\c O_{p^k}$ respectively.

\bigskip\noindent{\bf Relative cohomology.}  Define 
\[
G(t, x) := \prod_{j=0}^\infty F(t^{p^j}, x^{p^j}),
\]
and note that $F(t, x) = \frac{G(t, x)}{G(t^p, x^p)}$. For $i \geq 0$ set $\gamma_i := \sum_{j = 0}^i \gamma^{p^j} / p^j = -\sum_{j = i+1}^\infty \gamma^{p^j}/p^j$. The latter shows $\ord_p(\gamma_i) = \frac{p^{i+1}}{p-1} - (i+1)$. Set $K(t, x) := x_1 + \cdots + x_n + t/(x_1 \cdots x_n)$, and define $H(t, x) := \sum_{i = 0}^\infty \gamma_i K(t^{p^i}, x^{p^i})$. For each $1 \leq l \leq n$, define the differential 
\begin{align*}
D_{t^{p^k}, l} :&= \frac{1}{G(t^{p^k}, x)} \circ x_l \frac{\partial}{\partial x_l} \circ G(t^{p^k}, x) \\
&= x_l \frac{\partial}{\partial x_l} + x_l \frac{\partial H(t^{p^k}, x)}{\partial x_l}.
\end{align*}
One checks that $\gamma_i p^i x_l \frac{\partial K(t^{p^{i+k}}, x^{p^i})}{\partial x_l} \in \c C_{p^k}$ for all $i \geq 0$. It follows that $x_l \frac{\partial H(t^{p^k}, x)}{\partial x_l} \in \c C_{p^k}$ so that $D_{t^{p^k}, l}$ is an endomorphism of $\c C_{p^k}$ over $\c O_{p^k}$. As the $D_{t^{p^k}, l}$ commute with one another we may construct a Koszul complex:  set $\Omega^0(\c C_{p^k}, D_{t^{p^k}}) := \c C_{p^k}$, and for $1 \leq i \leq n$,
\[
\Omega^i(\c C_{p^k}, D_{t^{p^k}}) := \bigoplus_{1 \leq j_1 < j_2 < \cdots < j_i \leq n} \c C_{p^k} \frac{dx_{j_1}}{x_{j_1}} \wedge \cdots \wedge \frac{dx_{j_i}}{x_{j_i}}
\]
with boundary map $D_{t^{p^k}}: \Omega^i \rightarrow \Omega^{i+1}$ defined by
\[
D_{t^{p^k}}(\xi \frac{d x_{j_1}}{x_{j_1}} \wedge \cdots \wedge \frac{d x_{j_i}}{x_{j_i}}) := \left( \sum_{l=1}^n D_{t^{p^k}, l}(\xi) \frac{d x_l}{x_l} \right) \wedge \frac{d x_{j_1}}{x_{j_1}} \wedge \cdots \wedge \frac{d x_{j_i}}{x_{j_i}}.
\]
Denote by $H^i(\c C_{p^k}, D_{t^{p^k}})$ the associated cohomology spaces of the complex $\Omega^\bullet(\c C_{p^k}, D_{t^{p^k}})$. It was shown in \cite{MR3572279} that $H^i = 0$ for every $i \not= n$, and $H^n(\c C_{p^k}, D_{t^{p^k}})$ is a free $\c O_{p^k}$-module of rank $n+1$. Furthermore, if $\xi_0(t, x), \ldots, \xi_n(t, x) \in \c C$ is a basis of $H^n(\c C, D_t)$, then $\xi_0(t^{p^k}, x), \ldots, \xi_n(t^{p^k}, x) \in \c C_{p^k}$ is a basis of $H^n(\c C_{p^k}, D_{t^{p^k}})$.

\bigskip\noindent{\bf Relative Frobenius.} The function $\psi_x: \sum a(u) x^u \mapsto \sum a(pu) x^u$ defines a map $\psi_x: \c C_{p^k} \rightarrow \c C_{p^{k+1}}$. Define
\begin{align*}
\alpha_1(t^{p^k}) :&= \frac{1}{G(t^{p^{k+1}}, x)} \circ \psi_x \circ G(t^{p^k}, x) \\
&= \psi_x \circ F(t^{p^k}, x).
\end{align*}
It follows from (\ref{E: Frob coeff est}) (see also Lemma \ref{L: chain level est} below) that $\alpha_1(t^{p^k}): \c C_{p^{k}} \rightarrow \c C_{p^{k+1}}$. Next, define $\alpha_a: \c C \rightarrow \c C_q$ by
\begin{align*}
\alpha_a :&= \alpha_1(t^{p^{a-1}}) \cdots \alpha_1(t^p) \alpha_1(t) \\
&= \frac{1}{G(t^q, x)} \circ \psi_x^a \circ G(t, x).
\end{align*}
Since $q D_{l, t^q} \circ \alpha_a = \alpha_a \circ D_{l, t}$ for each $1 \leq l \leq n$, we may define a chain map $\Frob^\bullet(\alpha_a):  \Omega^\bullet(\c C, D_t) \rightarrow \Omega^\bullet(\c C_q, D_{t^q})$ by
\begin{equation}\label{E: Fr chain map}
\Frob^i(\alpha_a) := \sum_{1 \leq j_1 < \cdots < j_i \leq n} q^{n-i} \alpha_a \frac{dx_{j_1}}{x_{j_1}} \wedge \cdots \wedge \frac{dx_{j_i}}{x_{j_i}}.
\end{equation}
Since $H^i = 0$ for every $i \not= n$, we need only focus on $\bar \alpha_a(t) := \Frob^n(\alpha_a): H^n(\c C, D_t) \rightarrow H^n(\c C_q, D_{t^q})$.

\bigskip\noindent{\bf Kloosterman $L$-function on the fibres.} Let $\bar t \in \overline{\bb F}_q^*$ with $\deg(\bar t) := [\bb F_q(\bar t) : \bb F_q]$. Using Dwork's splitting function $\theta$ define the additive character $\Theta: \bb F_q \rightarrow \overline{\bb Q}_p$ via $\Theta := \theta(1)^{\Tr_{\bb F_q / \bb F_p}(\cdot)}$, and $\Theta_{\bar t} := \Theta \circ \Tr_{\bb F_q(\bar t) / \bb F_q}(\cdot)$. For each positive integer $m$ define the Kloosterman sum over the base field $\bb F_q(\bar t)$:
\[
\Kl_n(\bar t, m) := \sum_{\bar x \in (\bb F_{q^{m \deg(\bar t)}}^* )^n} \Theta_{\bar t} \circ \Tr_{\bb F_{q^{m \deg(\bar t)}} / \bb F_q(\bar t)}( \bar x_1 + \cdots + \bar x_n + \frac{\bar t}{\bar x_1 \cdots \bar x_n}),
\]
and its associated $L$-function
\[
L(\Kl_n / \bb F_q, \bar t, T) := \exp\left( \sum_{m=1}^\infty \Kl_n(\bar t, m) \frac{T^m}{m} \right).
\]

Let $\hat t$ be the Teichm\"uller representative of $\bar t$, and denote by $\c O_{\hat t}$ the ring $\c O_{q^{\deg(\bar t)}}|_{t = \hat t}$, that is, the ring $\c O_{q^{\deg(\bar t)}}$ specialized at $t = \hat t$. Specialization of $t \mapsto \hat t$ is a ring map from $\c O_{q^{\deg(\bar t)}}$ to $\c O_{\hat t}$. Similarly, denote by $\c C_{\hat t}$ the Banach $\c O_{\hat t}$-module obtained by specializing the space $\c C_{q^{\deg(\bar t)}}$ at $t = \hat t$. Last, set $\alpha_{\hat t} := \alpha_{a \deg(\bar t)}(\hat t)$ and note it is an endomorphism of $\c C_{q^{\deg(\bar t)}}$.

Next, denote by $\Omega^\bullet(\c C_{\hat t}, D_{\hat t})$ the complex obtained by specializing the complex $\Omega^\bullet(\c C_{q^{\deg(\bar t)}}, D_{t^{q^{\deg(\bar t)}}})$ at $t = \hat t$, and define the chain map $\Frob^\bullet( \alpha_{\hat t})$ on $\Omega^\bullet(\c C_{\hat t}, D_{\hat t})$ as in (\ref{E: Fr chain map}). Dwork's trace formula (the first equality) along with the chain map (the second equality) tells us that
\begin{align*}
\Kl_n(\bar t, m) &= (q^{m \deg(\bar t)} - 1)^n \Tr( \alpha_{\hat t} \mid \c C_{\hat t}) \\
&= \sum_{i = 0}^n (-1)^i \Tr( \Frob^i( \alpha_{\hat t} )^m \mid H^i( \c C_{\hat t}, D_{\hat t})).
\end{align*}
Since cohomology is acyclic except in the top dimension, setting $\bar \alpha_{\hat t} := \Frob^n( \alpha_{\hat t} )$, we see that
\[
\Kl_n(\bar t, m) = (-1)^n \Tr( \bar \alpha_{\hat t}^m \mid H^n( \c C_{\hat t}, D_{\hat t})).
\]
Further, since $H^n(\c C_{\hat t}, D_{\hat t})$ is of rank $n+1$ at each fibre $\hat t$,
\begin{align}\label{E: eigenvalues of alpha}
L(\Kl_n / \bb F_q, \bar t, T)^{(-1)^{n+1}} &= \det(1 - \bar \alpha_{\hat t} T \mid H^n( \c C_{\hat t}, D_{\hat t})) \notag \\
&= (1 - \pi_0(\bar t) T) \cdots (1 - \pi_n(\bar t) T)
\end{align}
where the reciprocal roots $\{ \pi_i(\bar t) \}_{i=0}^n$ are the eigenvalues of $\bar \alpha_{\hat t}$ acting on $H^n(\c C_{\bar t}, \c D_{\bar t})$

\bigskip\noindent{\bf Symmetric power cohomology.} Define the differential
\begin{align*}
\partial_{t^{p^m}} :&= \frac{1}{G(t^{p^m}, x)} \circ t \frac{\partial}{\partial t} \circ G(t^{p^m}, x) \\
&= t \frac{\partial}{\partial t} + t \frac{\partial H(t^{p^m}, x)}{\partial t} ,
\end{align*}
and note $\partial_{t^{p^m}}$ is an endomorphism of $\c C_{p^m}$. Since $\partial_{t^{p^m}}$ commutes with each $D_{t^{p^m}, l}$ it induces a connection map $\partial_{t^{p^m}}$ on $H^n(\c C_{p^m}, D_{t^{p^m}})$. Also, since $t \frac{d}{dt}$ and $\psi_x$ commute, we see that $\partial_{t^q} \alpha_a(t) = \alpha_a(t) \partial_t$, which carries over to the cohomology space $H^n$:
\begin{equation}\label{E: partial commute no p}
\partial_{t^q} \bar \alpha_a(t) = \bar \alpha_a(t) \partial_t.
\end{equation}

Consider now the $k$-th symmetric power of $H^n(\c C_q, D_{t^q})$ over $\c O_q$:
\[
\c S_{k, t^q} := \Sym^k_{\c O_q} H^n(\c C_q, D_{t^q}).
\]
Extend the map $\partial_{t^q}$ to this space as follows: elements in $\c S_{k, t^q}$ are linear combinations over $\c O_q$ of monomials of the form $u_1 \cdots u_k$, where $u_1, \ldots, u_k \in H^n(\c C_q, D_{t^q})$. Define $\partial_{t^q}$ by linearly extending over $\c O_q$ the action on monomials defined by
\[
\partial_{t^q}(u_1 \cdots u_k) := \sum_{i=1}^k u_1 \cdots \hat u_i \cdots u_k \partial_{t^q}(u_i),
\]
where $\hat u_i$ indicates that $u_i$ has been removed from the product. Define the complex $\Omega^\bullet(\c S_{k, t^q}, \partial_{t^q})$ by:
\[
\Omega^0 := \c S_{k, t^q} \qquad \text{and} \qquad \Omega^1 := \c S_{k, t^q} \frac{dt}{t},
\]
with boundary map $\partial_{t^q}: \Omega^0 \rightarrow \Omega^1$ defined by $\xi \mapsto \partial_{t^q}(\xi) \frac{dt}{t}$. Denote the associated cohomology spaces by $H^i(\c S_{k, t^q}, \partial_{t^q})$ for $i = 0, 1$.

\bigskip\noindent{\bf Cohomological formula.} For $1 \leq m \leq a$ define $\psi_t: \c O_{p^m} \rightarrow \c O_{p^{m-1}}$ by $\psi_t: \sum a(r) t^r \mapsto \sum a(pr) t^r$. Recall, if the image of $\xi_0(t, x), \ldots, \xi_n(t, x) \in \c C$ is a basis of $H^n(\c C, D_t)$ then the image of $\xi_0(t^q, x), \ldots, \xi_n(t^q, x) \in \c C_q$ is a basis of $H^n(\c C_{q}, D_{t^q})$. Let $\bar \xi_0(t, x), \ldots, \bar \xi_n(t, x)$, with $\xi_i(t, x) \in \c C$, be any basis of $H^n(\c C, D_t)$, then $\{ \bar \xi_0^{i_0} \cdots \bar \xi_n^{i_n} \mid i_0 + \cdots + i_n = k \}$ is a basis of $\c S_{k, t}$. Define $\psi_t: \c S_{k, t^{p^m}} \rightarrow \c S_{k, t^{p^{m-1}}}$ by linearly extending the action: for $\zeta \in \c O_{p^m}$ and $\zeta \bar \xi_0(t^{p^m}, x)^{i_0} \cdots \bar \xi_n(t^{p^m}, x)^{i_n} \in \c S_{k, t^{p^m}}$ define
\[
\psi_t( \zeta \bar \xi_0(t^{p^m}, x)^{i_0} \cdots \bar \xi_n(t^{p^m}, x)^{i_n} ) := \psi_t(\zeta) \bar \xi_0(t^{p^{m-1}}, x)^{i_0} \cdots \bar \xi_n(t^{p^{m-1}}, x)^{i_n}.
\]
From the definition,
\begin{align*}
\partial_{t^{p^m}}( &\zeta \bar \xi_0(t^{p^m}, x)^{i_0} \cdots \bar \xi_n(t^{p^m}, x)^{i_n} )  \\
&= t \frac{d}{dt}(\zeta)\bar \xi_0(t^{p^m}, x)^{i_0} \cdots \bar \xi_n(t^{p^m}, x)^{i_n} + \sum_{j=1}^n i_j \zeta \bar \xi_0(t^{p^m}, x)^{i_0} \cdots \bar \xi_j(t^{p^m}, x)^{i_j -1} \cdots \bar \xi_n(t^{p^m}, x)^{i_n} \partial_{t^{p^m}}( \bar \xi_j(t^{p^m}, x)).
\end{align*}
It follows that $\psi_t \circ \partial_{t^{p^m}} = p \partial_{t^{p^{m-1}}} \circ \psi_t$. The map $\bar \alpha_a(t): H^n(\c C, D_t) \rightarrow H^n(\c C_q, D_{t^q})$ defines a map $\Sym^k(\bar \alpha_a): \c S_{k, t} \rightarrow \c S_{k, t^q}$ as follows: linearly extend over $\c O$ the action on monomials $u_1 \cdots u_k \in \c S_{k, t}$, with $u_1, \ldots, u_k \in H^n(\c C, D_{t})$, defined by $\Sym^k(\bar \alpha_a)(u_1 \cdots u_k) := \bar \alpha_a(u_1) \cdots \bar \alpha_a(u_k)$. From (\ref{E: partial commute no p}) it follows that $\partial_{t^q} \circ \Sym^k(\bar \alpha_a) = \Sym^k(\bar \alpha_a) \circ \partial_t$. Similarly, the map $\bar \alpha_{\hat t}$ on $H^n( \c C_{\hat t}, D_{\hat t})$ defines an endomorphism $\Sym^k \bar \alpha_{\hat t}$ of $\c S_{k, \hat t} := \Sym^k_{\c O_{\hat t}} H^n(\c C_{\hat t}, D_{\hat t})$.

Set $\beta_k := \psi_t^a \circ \Sym^k(\bar \alpha_a): \c S_{k, t} \rightarrow \c S_{k, t}$. Observe that
\[
\beta_k \circ \partial_t = \psi_t^a \circ \Sym^k(\bar \alpha_a) \circ \partial_t = \psi_t^a \circ \partial_{t^q} \circ \Sym^k(\bar \alpha_a) = q \partial_t \circ \psi_t^a \circ \Sym^k(\bar \alpha_a) = q \partial_t \circ \beta_k.
\]
Hence, $\beta_k$ induces an endomorphism $\bar \beta_k$ on $H^i(\c S_{k, t}, \partial_t)$, and a chain map on the exact sequence:
\[
\begin{CD}
0 @>>> H^0 @>>> \c S_{k, t} @>\partial_t>> \c S_{k, t} @>>> H^1 @>>> 0 \\
@. @V q^m \bar\beta_k^mVV @Vq^m \beta_k^mVV @VV\beta_k^mV @VV\bar \beta_k^mV @. \\
0 @>>> H^0 @>>> \c S_{k, t} @>\partial_t>> \c S_{k, t} @>>> H^1 @>>> 0.
\end{CD}
\]
Using that the alternating sum of traces is zero, we have
\begin{equation}\label{E: Alt trace}
(q^m - 1) Tr(\beta_k^m \mid \c S_{k, t}) = q^m Tr(\bar \beta_k^m \mid H^0) - Tr(\bar \beta_k^m \mid H^1).
\end{equation}
Setting
\[
\bar \alpha_{a, m}(t) := \bar \alpha_a(t^{q^{m-1}}) \cdots \bar \alpha_a( t^q) \bar \alpha_a(t), 
\]
then Dwork's trace formula gives
\begin{equation}\label{E: Dw trace}
(q^m - 1) Tr(\beta_k^m \mid \c S_{k, t}) = \sum_{\hat t \in \overline{\bb Q}_p, \hat t^{q^{m}-1} = 1} \Tr\left(\Sym^k  \bar \alpha_{a, m}(\hat t)  \mid \c S_{k, \hat t} \right).
\end{equation}
Note, when $\hat t = \Teich(\bar t)$ with $r := \deg(\bar t)$, then $\bar \alpha_{a, sr}(\hat t) = \bar \alpha_{a, r}(\hat t)^s = \bar \alpha_{\hat t}^s$. Consequently, $\Sym^k \bar \alpha_{a, sr}(\hat t) = \left(\Sym^k \bar \alpha_{a, r}(\hat t) \right)^s =  \left(\Sym^k \bar \alpha_{\hat t} \right)^s$. We may now prove the main result of this section.

\begin{proof}[Proof of Theorem \ref{T: cohom formula}]
From (\ref{E: Alt trace}), (\ref{E: Dw trace}), and the observation after (\ref{E: Dw trace}), we see that:
\begin{align}\label{E: cohom formula}
\frac{\det(1 - \bar \beta_k T \mid H^1)}{\det(1 - q \bar \beta_k T \mid H^0)} &= \exp \sum_{m = 1}^\infty (q^m - 1) \Tr(\beta_k^m \mid \c S_k) \frac{T^m}{m} \notag \\
&=  \exp \sum_{m = 1}^\infty \sum_{\hat t^{q^{m-1}} = 1} \Tr\left( \Sym^k  \bar \alpha_{a, m}(\hat t)  \mid \c S_{k, \hat t} \right) \frac{T^m}{m} \notag \\
&= \exp \sum_{r = 1}^\infty \sum_{\substack{\bar t \in \overline{\bb F}_q^*, r = \deg(\bar t) \\ \hat t = \Teich(\bar t)}} \sum_{s = 1}^\infty \left( \Sym^k \bar \alpha_{a, r}(\hat t) \right)^s \frac{T^{s r}}{s r} \notag \\
&= \prod_{r \geq 1} \prod_{\substack{\bar t \in \overline{\bb F}_q^*, r = \deg(\bar t) \\ \hat t = \Teich(\bar t)}} \det(1 -  \Sym^k \bar \alpha_{\hat t} T^r \mid \c S_{k, \hat t})^{-1/r}.
\end{align}

From (\ref{E: eigenvalues of alpha}), $\bar \alpha_{\hat t}$ has eigenvalues $\pi_0(\bar t), \ldots, \pi_n(\bar t)$, and thus $\Sym^k \bar \alpha_{\hat t}$ has eigenvalues $\pi_0(\bar t)^{i_0} \cdots \pi_n(\bar t)^{i_n}$ for $i_0 + \cdots + i_n = k$. Hence,
\[
\det(1 -  \Sym^k \bar \alpha_{\hat t} T \mid \c S_{k, \hat t}) = \prod_{i_0 + \cdots + i_n = k} (1 - \pi_0(\bar t)^{i_0} \cdots \pi_n(\bar t)^{i_n} T).
\]
This means (\ref{E: cohom formula}) equals
\[
\prod_{r \geq 1} \prod_{\substack{\bar t \in \overline{\bb F}_q^*, r = \deg(\bar t) \\ \hat t = \Teich(\bar t)}} \prod_{i_0 + \cdots + i_n = k} (1 - \pi_0(\bar t)^{i_0} \cdots \pi_n(\bar t)^{i_n} T^r)^{-1/r} = L(\Sym^k \Kl_n / \bb F_q, T),
\]
proving the result.
\end{proof}

\section{Cohomology}

\subsection{Relative cohomology}\label{S: Rel cohom}

In this section we study the complex $\Omega^\bullet(\c C_{p^k}, D_{t^{p^k}})$ by lifting results on the reduction modulo the uniformizer $\tilde \gamma := \gamma^{1/q}$ of $\f O_q$. We begin by considering the reduction of $\c C_{p^k}$ modulo $\tilde \gamma$ with respect to the orthonormal basis $\{ \gamma^{w(u)} t^{p^k m(u)} x^u \}$; this reduction is defined precisely below.

Set $\oO_{p^k} := \bb F_q[t]$; although there is no $p^k$ on the righthand side, the usefulness of the subscript $p^k$ will be apparent in a moment when we define the reduction map on $\c O_{p^k}$. Next, set $\c R_{p^k} := \bb F_q[ \{ t^r \cdot t^{p^k m(u)} x^u \mid r \geq 0, u \in \bb Z^n\}]$ and define an increasing filtration ($i \geq 0$)
\[
\Fil^i \c R_{p^k} := \{ \sum a_u t^{p^k m(u)} x^u \mid a_u \in \oO_{p^k} \text{ and } w(u) \leq i \}.
\]
Denote its associated graded ring by $\oC_{p^k} := \bigoplus_{i \geq 0} \Fil^i \c R_{p^k} / \Fil^{i-1} \c R_{p^k}$, and observe by Proposition \ref{P: w and m} that multiplication in $\oC_{p^k}$ satisfies
\[
(t^r \cdot t^{{p^k} m(u)} x^u) \cdot (t^s \cdot t^{{p^k} m(v)} x^v) = 
\begin{cases}
t^{r+s} \cdot t^{{p^k} m(u+v)} x^{u+v} & \text{if $u$ and $v$ are cofacial} \\
0  & \text{otherwise.}
\end{cases}
\]
Consequently, the reduction maps 
\[
\c O_{p^k} \rightarrow \oO_{p^k} \quad \text{defined by} \quad a(r) \gamma^{r (n+1)/p^k} t^r \mapsto \overline{a(r)} t^r
\]
with $\overline{a(r)} \in \bb F_q$ the image of $a(r) \in \f O_q$ in the residue field, and
\[
\c C_{p^k} \rightarrow \oC_{p^k} \quad \text{defined by} \quad \eta \gamma^{w(u)} t^{p^k m(u)} x^u \mapsto \overline{\eta} t^{p^k m(u)} x^u
\]
with $\eta \in \c O_{p^k}$ and $\overline{\eta} \in \oO_{p^k}$, are both ring homomorphisms, and further the latter gives an isomorphism $\c C_{p^k} / \tilde \gamma \c C_{p^k} \cong \oC_{p^k}$.

On $\oO_{p^k}$, define the increasing filtration
\[
\Fil^d \oO_{p^k} := 
\begin{Bmatrix}
\text{$\bb F_q$-space generated by $t^r$ } \\ \text{such that $r(n+1)/p^k \leq d$}
\end{Bmatrix}
\]
and on $\oC_{p^k}$, define 
\[
F^{(d,i)} \oC_{p^k} := 
\begin{Bmatrix}
\text{$\bb F_q$-space generated by $t^r \cdot t^{p^k m(u)} x^u$} \\ \text{such that $r(n+1)/p^k \leq d$ and $w(u) \leq i$}
\end{Bmatrix}
\]
Set $\oD_{t^{p^k}, l}^{(1)} := x_l \frac{\partial}{\partial x_l} + K_l(t^{p^k}, x)$, where  $K_l(t, x) := x_l \frac{\partial}{\partial x_l} K(t, x)$. We define the complex $\Omega^\bullet(\oC_{p^k}, \oD_{t^{p^k}}^{(1)})$ in a similar way to $\Omega^\bullet(\c C_{p^k}, D_{t^{p^k}})$, and denote its cohomology by $H^\bullet(\Omega^\bullet(\oC_{p^k}, \oD_{t^{p^k}}^{(1)}))$. By (\ref{E: relate D1 and D}) below, $D_{t^{p^k}} = \oD_{t^{p^k}}^{(1)}$ mod $\tilde \gamma$, and thus the reduction of $\Omega^\bullet(\c C_{p^k}, D_{t^{p^k}})$ mod $\tilde \gamma$ is isomorphic to $\Omega^\bullet(\oC_{p^k}, \oD_{t^{p^k}}^{(1)})$ as $\oO_{p^k}$-modules.

Similar to above, on $\c O_{p^k}$ define the increasing filtration
\[
\Fil^d \c O_{p^k} := 
\begin{Bmatrix}
\text{$\f O_q$-module generated by $\gamma^{r(n+1)/p^k} t^r$} \\ \text{such that $r(n+1)/p^k \leq d$}
\end{Bmatrix},
\]
and on $\c C_{p^k}$ define 
\[
F^{(d,i)} \c C_{p^k} := 
\begin{Bmatrix}
\text{ $\f O_q$-module generated by $\gamma^{(n+1)r/p^k + w(u)} t^r \cdot t^{p^k m(u)} x^u$} \\ \text{such that $r(n+1)/p^k \leq d$ and  $w(u) \leq i$}
\end{Bmatrix}.
\]
We note that the convexity for the functions $m$ and $w$ in the last property of Proposition 3.2 implies that
\[
F^{(d,i)}(\c C_{p^k}) \cdot F^{(e,j)}(\c C_{p^k}) \subseteq F^{(d+e,i+j)}(\c C_{p^k}),
\]
and the analogous relation holds for $F^{(d, i)}(\oC_{p^k})$ as well.

Define $D_{t^{p^k}, l}^{(1)} := x_l \frac{\partial}{\partial x_l} + \gamma K_l(t^{p^k}, x)$. 

\begin{lemma}
We have:
\begin{enumerate}
\item The complex $\Omega^\bullet(\c C_{p^k}, \c D_{t^{p^k}})$ is acyclic except for $H^n$ which is a free $\c O_{p^k}$-module of rank $n+1$. Setting $e_0 := 1$ and $e_i := \gamma^i x_1 x_2 \cdots x_i$ for $1 \leq i \leq n$, then $\{ e_0, e_1, \ldots, e_n \}$ is a basis of this space.
\item For $\xi \in F^{(d, i)}(\c C_{p^k})$ and $0 \leq l \leq \text{min} \{i,n\}$ there exist $a_l \in F^{d+i-l}(\c O_{p^k})$ and $\zeta_l \in F^{(d, i-1)}(\c C_{p^k})$ such that
\begin{equation}\label{E: D1 decomp}
\xi = \sum_{l=0}^{\min \{i,n\}} a_l e_l + \sum_{m=1}^n D_{t^{p^k}, l}^{(1)}(\zeta_m).
\end{equation}
\end{enumerate}
\end{lemma}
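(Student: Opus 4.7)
The plan is to prove both parts together by reducing modulo the uniformizer $\tilde\gamma$ of $\f O_q$, analyzing an associated Koszul complex on a Jacobian-type ring, and then lifting back to $\c C_{p^k}$ by successive approximation. Part 1 refines the already-known acyclicity statement of \cite{MR3572279} by exhibiting an explicit basis, and will follow as a corollary once the filtered decomposition in Part 2 is established.

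First I would work out the reduced picture. On $\oC_{p^k}$, the differential $\oD_{t^{p^k}, l}^{(1)}$ splits as $x_l \partial/\partial x_l + K_l(t^{p^k}, x)$, where $K_l(t^{p^k}, x) = x_l - t^{p^k}/(x_1 \cdots x_n)$. With respect to the weight filtration $F^{(d,i)}$, the Euler term $x_l \partial/\partial x_l$ preserves weight while multiplication by $K_l$ raises $w(u)$ by one, so the associated graded of $\Omega^\bullet(\oC_{p^k}, \oD_{t^{p^k}}^{(1)})$ is the Koszul complex on the monomial ring $\oC_{p^k}$ for multiplication by $(K_1, \ldots, K_n)$. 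Laurent non-degeneracy of the Kloosterman polynomial $K = x_1 + \cdots + x_n + t/(x_1 \cdots x_n)$ with respect to its Newton polytope $\Delta$ (standard for this family: on every face not containing the origin the Jacobian has no nontrivial torus zeros) then implies by the Adolphson--Sperber style computations that this graded Koszul complex is acyclic outside top degree, and its top cohomology is a free $\bb F_q[t]$-module of rank $(n+1)!\operatorname{vol}(\Delta) = n+1$ with monomial basis $\{1,\, x_1,\, x_1 x_2,\, \ldots,\, x_1 x_2 \cdots x_n\}$. These are exactly the reductions of $\{e_0, e_1, \ldots, e_n\}$ modulo $\tilde\gamma$.

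To prove Part 2 I would induct on the weight $i$. The base case $i = 0$ is immediate since $w(u) = 0$ forces $u = 0$, so $\xi = a_0$ with $a_0 \in \Fil^d(\c O_{p^k})$. For the inductive step, reduce $\xi \in F^{(d,i)}(\c C_{p^k})$ modulo $\tilde\gamma$ and apply the graded decomposition to obtain $\bar a_l^{(1)} \in \Fil^{d+i-l}(\oO_{p^k})$ and $\bar\zeta_m^{(1)} \in F^{(d, i-1)}(\oC_{p^k})$. Lift these arbitrarily to $a_l^{(1)}$ and $\zeta_m^{(1)}$ in the corresponding filtered pieces of the characteristic-zero spaces. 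The residual
\[
\xi - \sum_{l=0}^{\min\{i,n\}} a_l^{(1)} e_l - \sum_{m=1}^n D_{t^{p^k}, m}^{(1)}(\zeta_m^{(1)})
\]
lies in $\tilde\gamma F^{(d, i)}(\c C_{p^k})$. Dividing by $\tilde\gamma$ and iterating, the sum of all successive approximations converges $p$-adically, since each step gains a factor of $\tilde\gamma$ and the filtered pieces are $\tilde\gamma$-adically complete, yielding the decomposition (\ref{E: D1 decomp}).

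Part 1 follows as a corollary: Part 2 applied to representatives of $H^n(\c C_{p^k}, D_{t^{p^k}})$, together with the congruence $D_{t^{p^k}} \equiv D_{t^{p^k}}^{(1)}$ modulo $\tilde\gamma$ to be recorded as (\ref{E: relate D1 and D}), shows that $\{e_0, \ldots, e_n\}$ spans $H^n$; linear independence modulo boundaries follows by reducing any alleged relation mod $\tilde\gamma$ and invoking independence of $\{1, x_1, \ldots, x_1 \cdots x_n\}$ in the graded Jacobian ring. The main obstacle I expect is the filtration bookkeeping in the inductive step, in particular verifying that the graded decomposition can be performed with the stated indices $(d+i-l)$ for $a_l$ and $(d, i-1)$ for $\zeta_m$: one must carefully track how $K_l$ shifts the weight $i$ by one while preserving the $t$-filtration, so that the $\tilde\gamma$-gain at each iteration precisely matches the filtration loss and the iterates do not drift outside the claimed filtered pieces.
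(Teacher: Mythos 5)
Your proposal is correct in outline and shares the architecture of the paper's proof --- reduce modulo $\tilde\gamma$, establish the statement for the reduced complex $\Omega^\bullet(\oC_{p^k}, \oD_{t^{p^k}}^{(1)})$, then lift --- but it differs in where the reduced input comes from. The paper simply quotes \cite[Theorem 2.5]{MR3572279}, which already contains both the acyclicity/rank-$(n+1)$ statement with the monomial basis $\{1, x_1, x_1x_2, \ldots, x_1\cdots x_n\}$ and the filtered decomposition with the precise indices $\Fil^{i+d-l}(\oO_{p^k})$ and $F^{(d,i-1)}(\oC_{p^k})$, and then disposes of the lift as ``a standard lifting argument''; you instead re-derive the reduced statement from the associated graded Koszul complex on $(\bar K_1,\ldots,\bar K_n)$ via non-degeneracy of the Kloosterman Laurent polynomial, and you make the lifting explicit by successive approximation (which is legitimate: the constraints $r(n+1)/p^k\leq d$ and $w(u)\leq i$ make $F^{(d,i)}(\c C_{p^k})$ and $\Fil^{d+i-l}(\c O_{p^k})$ finitely generated over $\f O_q$, hence $\tilde\gamma$-adically complete, so your iterates converge inside the claimed filtered pieces). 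This buys self-containedness at the cost of exactly the filtration bookkeeping you flag, which the paper avoids by citation. Two corrections: the rank formula should be $n!\operatorname{vol}(\Delta)=n+1$, not $(n+1)!\operatorname{vol}(\Delta)$, since $\Delta\subset\bb R^n$ has volume $(n+1)/n!$; and your deduction of Part 1 from Part 2 needs a word of care, because Part 2 decomposes modulo the images of $D_{t^{p^k},l}^{(1)}$ while Part 1 concerns $D_{t^{p^k}}$ --- either invoke the freeness of $H^n(\c C_{p^k},D_{t^{p^k}})$ already cited from \cite{MR3572279} and argue by Nakayama that $e_0,\ldots,e_n$ reduce to the basis of $H^n(\oC_{p^k},\oD_{t^{p^k}}^{(1)})$, or run a recursion through the relation $D_{t^{p^k},l}^{(1)}=D_{t^{p^k},l}+\sum_{j\geq 1}p^{p^j-1}\eta_{p^j,l}$, which is precisely the content of the subsequent lemma and of Theorem \ref{T: main rel cohom reduction}, rather than of the present one.
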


\begin{proof}
It was shown in \cite[Theorem 2.5]{MR3572279} that the complex $\Omega^\bullet(\oC_{p^k}, \oD_{t^{p^k}}^{(1)})$ is acyclic except for $H^n$ which is a free $\oO_{p^k}$-module of rank $n+1$, with basis $\bar e_0 := 1$ and $\bar e_i := x_1 x_2 \cdots x_i$ for $1 \leq i \leq n$. It also showed, for $\bar \xi \in F^{(d, i)}(\oC_{p^k})$, there exist $\bar a_l \in \Fil^{i + d - l}(\oO_{p^k})$ and $\bar \zeta_l \in F^{(d, i-1)}(\oC_{p^k})$ such that 
\begin{equation}\label{E: filter on Fq}
\bar \xi = \sum_{l = 0}^{\min \{i,n\}} \bar a_l \bar e_l + \sum_{m=1}^n \oD_{t^{p^k}, m}^{(1)}(\bar \zeta_m).
\end{equation}
A standard lifting argument gives (\ref{E: D1 decomp}).
\end{proof}

\begin{lemma}
We have:
\begin{enumerate}
\item For each $j \geq 1$ and $1 \leq l \leq n$, there exist $\eta_{p^j,l} \in F^{(0, p^j)}(\c C_{p^k})$ such that
\begin{equation}\label{E: relate D1 and D}
D_{t^{p^k}, l}^{(1)} = D_{t^{p^k}, l} + \sum_{j \geq 1} p^{p^j - 1} \eta_{p^j, l}.
\end{equation}
\item For $\xi \in F^{(d, i)}(\c C_{p^k})$ there exist $a_l \in \Fil^{d+i-l}(\c O_{p^k})$, $\zeta_m \in F^{(d, i-1)}(\c C_{p^k})$, and $w_j \in F^{(d, p^j + i -1)}(\c C_{p^k})$ such that
\begin{equation}\label{E: D relation}
\xi = \sum_{l=0}^{\text{min} \{i,n\}} a_l e_l + \sum_{m=1}^n D_{t^{p^k}, l}(\zeta_m) + \sum_{j = 1}^\infty p^{p^j-1} w_j.
\end{equation}
\end{enumerate}
\end{lemma}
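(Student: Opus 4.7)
For part (1), the plan is a direct expansion of $D_{t^{p^k}, l}$ followed by a $p$-adic estimate. Starting from $H(t^{p^k}, x) = \sum_{i \geq 0} \gamma_i K(t^{p^{i+k}}, x^{p^i})$ and the identity $x_l \frac{\partial}{\partial x_l} K(t^{p^{i+k}}, x^{p^i}) = p^i K_l(t^{p^{i+k}}, x^{p^i})$ (which is immediate from $K(t,x) = x_1 + \cdots + x_n + t/(x_1 \cdots x_n)$), we obtain
\[
D_{t^{p^k}, l} = x_l \frac{\partial}{\partial x_l} + \sum_{i \geq 0} \gamma_i p^i K_l(t^{p^{i+k}}, x^{p^i}).
\]
Since $\gamma_0 = \gamma$, the $i = 0$ contribution together with $x_l \frac{\partial}{\partial x_l}$ is exactly $D_{t^{p^k}, l}^{(1)}$, yielding $D_{t^{p^k}, l}^{(1)} - D_{t^{p^k}, l} = -\sum_{i \geq 1} \gamma_i p^i K_l(t^{p^{i+k}}, x^{p^i})$.

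The heart of part (1) is then a $p$-adic check. Both monomials $x_l^{p^i}$ and $t^{p^{i+k}}/(x_1 \cdots x_n)^{p^i}$ of $K_l(t^{p^{i+k}}, x^{p^i})$ correspond to lattice points $u \in \bb Z^n$ of polyhedral weight $w(u) = p^i$, so $\gamma^{p^i} K_l(t^{p^{i+k}}, x^{p^i}) \in F^{(0, p^i)}(\c C_{p^k})$. Using the given estimate $\ord_p(\gamma_i) = p^{i+1}/(p-1) - (i+1)$, we compute
\[
\ord_p\!\left( \frac{\gamma_i p^i}{\gamma^{p^i}} \right) = \frac{p^{i+1} - p^i}{p-1} - 1 = p^i - 1,
\]
so the scalar $-p^{-(p^i - 1)} \gamma_i p^i / \gamma^{p^i}$ is a unit in $\f O_q$. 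Setting $\eta_{p^i, l} := -p^{-(p^i - 1)} \gamma_i p^i K_l(t^{p^{i+k}}, x^{p^i})$ then yields an element of $F^{(0, p^i)}(\c C_{p^k})$ satisfying the required identity.

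For part (2), the plan is to apply (\ref{E: D1 decomp}) from the previous lemma and substitute (\ref{E: relate D1 and D}). Given $\xi \in F^{(d, i)}(\c C_{p^k})$, write $\xi = \sum_{l = 0}^{\min\{i,n\}} a_l e_l + \sum_{m=1}^n D^{(1)}_{t^{p^k}, m}(\zeta_m)$ with $a_l \in \Fil^{d+i-l}(\c O_{p^k})$ and $\zeta_m \in F^{(d, i-1)}(\c C_{p^k})$, then expand each $D^{(1)}_{t^{p^k}, m}$ using part (1). The cross terms aggregate to $\sum_{j \geq 1} p^{p^j - 1} \sum_{m=1}^n \eta_{p^j, m} \zeta_m$. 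The multiplicative property $F^{(d_1, i_1)} \cdot F^{(d_2, i_2)} \subseteq F^{(d_1 + d_2, i_1 + i_2)}$ recalled earlier in this section gives $\eta_{p^j, m} \zeta_m \in F^{(d, p^j + i - 1)}(\c C_{p^k})$, so setting $w_j := \sum_{m=1}^n \eta_{p^j, m} \zeta_m$ produces an element of the required filtration degree.

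I do not anticipate serious obstacles. The only nontrivial computation is the $p$-adic estimate in part (1); part (2) is a routine substitution, and convergence of $\sum_{j \geq 1} p^{p^j - 1} w_j$ in $\c C_{p^k}$ is immediate since $p^j - 1 \to \infty$.
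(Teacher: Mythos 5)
Your proposal is correct and follows essentially the same route as the paper: expand $x_l\frac{\partial H}{\partial x_l}$ term by term, absorb the $i=0$ term into $D^{(1)}_{t^{p^k},l}$, factor $\gamma_j p^j = p^{p^j-1}\tau_j\gamma^{p^j}$ with $\tau_j$ a unit (your explicit valuation computation is exactly this), and for part (2) substitute into the previous lemma's decomposition and use $F^{(0,p^j)}\cdot F^{(d,i-1)}\subseteq F^{(d,p^j+i-1)}$. No gaps.
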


\begin{proof}
From the definitions, $D_{t^{p^k}, m}^{(1)} = D_{t^{p^k}, m} - \sum_{j \geq 1} \gamma_j p^j K_m((t^{p^k})^{p^j}, x^{p^j})$. The first assertion comes from rewriting $\gamma_j p^j K_m((t^{p^k})^{p^j}, x^{p^j})$ as follows. First, note that we may write $\gamma_j p^j =  p^{p^j - 1} \tau_j \gamma^{p^j}$ with $ord_p \tau_j = 0$. Then $\eta_{p^j, l} := - \tau_{j} \gamma^{p^j} K_m((t^{p^k})^{p^j}, x^{p^j}) \in F^{(0, {p^j})}(\c C_{p^k})$ gives  (\ref{E: relate D1 and D}).

The second assertion follows quickly from the first: using (\ref{E: D1 decomp}) and (\ref{E: relate D1 and D}),
\begin{align*}
\xi &= \sum_{l=0}^{\text{min} \{i,n\}} a_l e_l + \sum_{m=1}^n D_{t^{p^k}, l}^{(1)}(\zeta_m) \\ 
&= \sum_{l=0}^{\text{min} \{i,n\}} a_l e_l + \sum_{m=1}^n D_{t^{p^k}, m}(\zeta_m) + \sum_{j \geq 1} \sum_{m=1}^n p^{p^j-1} \eta_{p^j, m} \zeta_m.
\end{align*}
Since $ \eta_{p^j,m} \cdot \zeta_m \in F^{(0,p^j)}( \c C_{p^k}) \cdot F^{(d, i-1)}( \c C_{p^k}) \subseteq F^{(d, p^j + i -1)}( \c C_{p^k})$, the result follows by setting $w_j := \sum_{m=1}^n \eta_{p^j,l} \zeta_m$.
\end{proof}

For $b, c \in \bb R$, define the space
\[
\c O_{p^k}(b; c) := \left\{ \sum_{r = 0}^\infty a(r) \gamma^{r(n+1)/p^k} t^r \mid a(r) \in \f O_q \text{ and } \ord_p a(r)\ \geq br + c \right\}.
\]
Denote by $\c F$ the subset of $\prod_{i=1}^\infty \bb Z_{\geq 0}$ where every element $\bj \in \c F$ is of the form $\bj = (j_1, \ldots, j_s, 0, 0, \ldots)$ for some $s \geq 1$, and each $j_i \geq 1$. Define $s(\b j) := \max \{ i \mid j_i > 0 \}.$  When there is no confusion we will write $s$ for $s(\b j)$. Define $\rho(\bj) := p^{j_1} + \cdots + p^{j_s} - s$.

\begin{theorem}\label{T: main rel cohom reduction}
For $\xi \in F^{(d, i)}(\c C_{p^k})$, there exists $C(l, \xi) \in \c O_{p^k}(\frac{n+1}{p^k}; l - i - d)$ such that in $H^n( \Omega^\bullet(\c C_{p^k}, D_{t^{p^k}}))$ we have
\[
\xi = \sum_{l = 0}^n C(l, \xi) e_l   \qquad \mod D_{t^{p^k}}.
\]
The sum on the right is given explicitly by
\[
\sum_{l = 0}^n C(l, \xi) e_l = \sum_{l = 0}^{\min \{i,n\}} a(l, \xi) e_l +  \sum_{\bj \in \c F} \sum_l  p^{\rho(\b j)} a(l, \xi; \b j)e_l 
\]
where $a(l, \xi) \in \Fil^{i + d - l}( \c O_{p^k})$, $a(l, \xi; \b j) \in \Fil^{\rho(\b j) + i + d - l}( \c O_{p^k})$ and the inner sum in the second term on the right runs over $l \leq \min \{n, \rho(\bold{j}) +i\}$ for each $\bj \in \c F$.
\end{theorem}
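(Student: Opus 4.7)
The strategy is to iterate the decomposition (\ref{E: D relation}) and organize the resulting terms by multi-indices $\bj \in \c F$. Starting from $\xi \in F^{(d,i)}(\c C_{p^k})$, a first application of (\ref{E: D relation}) gives, modulo $D_{t^{p^k}}$,
\[
\xi \equiv \sum_{l=0}^{\min\{i,n\}} a(l, \xi) e_l + \sum_{j_1 \geq 1} p^{p^{j_1} - 1} w_{j_1},
\]
with $a(l, \xi) \in \Fil^{d+i-l}(\c O_{p^k})$ and $w_{j_1} \in F^{(d, p^{j_1} + i - 1)}(\c C_{p^k})$. This already produces the ``head'' $\sum_l a(l, \xi) e_l$ of the claimed expansion and leaves a remainder that lives in strictly larger filtration pieces.

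The next step is to iterate. Having processed all multi-indices of length $\leq s$, apply (\ref{E: D relation}) to each remaining $w_{\bj}$ with $\bj = (j_1, \ldots, j_s) \in \c F$; an easy induction using the lemma shows $w_{\bj} \in F^{(d, i + \rho(\bj))}(\c C_{p^k})$. This yields, modulo $D_{t^{p^k}}$, new $e_l$-contributions $a(l, \xi; \bj) e_l$ with $a(l, \xi; \bj) \in \Fil^{d + i + \rho(\bj) - l}(\c O_{p^k})$, valid for $0 \leq l \leq \min\{n, i + \rho(\bj)\}$, plus fresh remainders $p^{p^{j_{s+1}} - 1} w_{(j_1, \ldots, j_{s+1})}$. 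The accumulated $p$-adic factor in front of the $e_l$-term indexed by $\bj = (j_1, \ldots, j_s)$ is precisely $\prod_{r=1}^s p^{p^{j_r}-1} = p^{\rho(\bj)}$, matching the formula in the theorem term-by-term.

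The final task is convergence plus verification of the claimed Banach-space membership. For each $N$ only finitely many $\bj \in \c F$ satisfy $\rho(\bj) \leq N$, since such a bound constrains both the length $s(\bj)$ and each entry $j_r$; as $\rho(\bj) \to \infty$ the remainder tails vanish $p$-adically, so the iteration converges in $\c C_{p^k}$ to $\sum_l C(l, \xi) e_l$ with $C(l, \xi) = a(l, \xi) + \sum_{\bj \in \c F} p^{\rho(\bj)} a(l, \xi; \bj)$. For the growth estimate defining $\c O_{p^k}(\frac{n+1}{p^k}; l - i - d)$: each summand in $C(l, \xi)$, written as $\sum_r b(r) \gamma^{r(n+1)/p^k} t^r$, has its $t$-support bounded by $r(n+1)/p^k \leq d + i - l + \rho(\bj)$ (via the filtration, with $\rho = 0$ for the head term), while its coefficient satisfies $\ord_p b(r) \geq \rho(\bj)$ (from the explicit $p^{\rho(\bj)}$-factor together with $\f O_q$-integrality); these combine to give $\ord_p b(r) \geq \frac{n+1}{p^k} r + (l - i - d)$, which is exactly the defining inequality. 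The main obstacle is controlling the $\f O_q$-norms of the $a(l, \xi; \bj)$ through the recursion so that $p^{\rho(\bj)}$ really forces convergence in the specified Banach space rather than merely in a weaker topology; this in turn reduces to showing that the constructive splitting behind (\ref{E: D relation}) is bounded in the sup-norm, which is built into the filtered Banach-algebra structure of $\c C_{p^k}$ over $\c O_{p^k}$.
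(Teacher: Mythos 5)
Your proposal is correct and follows essentially the same route as the paper's proof: iterate the decomposition (\ref{E: D relation}), index the resulting terms by $\bj \in \c F$ with the filtration bookkeeping $w_{\bj} \in F^{(d,\, i+\rho(\bj))}(\c C_{p^k})$ and accumulated factor $p^{\rho(\bj)}$, and then combine the filtration bound $r(n+1)/p^k \leq d+i-l+\rho(\bj)$ with $\ord_p \geq \rho(\bj)$ to land in $\c O_{p^k}(\frac{n+1}{p^k};\, l-i-d)$. Your closing remark on sup-norm boundedness and convergence of the tails is a point the paper leaves implicit (the coefficients produced by the lifted decomposition lie in $\f O_q$, so integrality plus the $p^{\rho(\bj)}$ factor does the work), and it is correctly resolved as you indicate.
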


\begin{proof}
Let $\xi \in F^{(d, i)}(\c C_{p^k})$, then by (\ref{E: D relation}) and replacing the running index $j$ with $j_1$, there exist $a(l) \in \Fil^{d+i-l}(\c O_{p^k})$, $\zeta(m) \in F^{(d, i-1)}(\c C_{p^k})$, and $w_{j_1} \in F^{(d, p^{j_1} + i -1)}(\c C_{p^k})$ such that
\begin{equation}\label{E: Sperber ***}
\xi = \sum_{l=0}^{\min\{i,n\}} a(l) e_l + \sum_{m=1}^n D_{t^{p^k}, m} \zeta(m) + \sum_{j_1 = 1}^\infty p^{p^{j_1}-1} w_{j_1}.
\end{equation}
Since $w_{j_1} \in F^{(d, p^{j_1} + i - 1)}(\c C_{p^k})$, there exist $a(l; j_1) \in F^{d + p^{j_1} + i - 1 - l}(\c O_{p^k})$, $\zeta(m; j_1) \in F^{(d, p^{j_1} + i - 2)}(\c C_{p^k})$, and $w_{j_1, j_2} \in F^{(d, p^{j_1} + p^{j_2} + i - 2)}(\c C_{p^k})$ such that
\[
w_{j_1} = \sum_{l} a(l;j_1) e_l + \sum_{m=1}^n D_{t^{p^k}, m} \zeta(m; j_1) + \sum_{j_2 \geq 1} p^{p^{j_2}-1} w_{j_1, j_2}.
\]
Thus
\[
\sum_{j_1 = 1}^\infty p^{p^{j_1}-1} w_{j_1} = \sum_{j_1 \geq 1}  \sum_l p^{p^{j_1} -1} a(l; j_1) e_l + \sum_{m=1}^n D_{t^{p^k}, m} \left( \sum_{j_1 \geq 1} p^{p^{j_1}-1} \zeta(m;j_1) \right) + \sum_{j_1, j_2 \geq 1} p^{p^{j_1} + p^{j_2} - 2} w_{j_1, j_2}.
\]
with the inner sum in the first term on the right running over $0 \leq l \leq \text{min} \{n, p^{j_1} -1 + i \}$. Repeating this procedure with $w_{j_1, j_2}$ and continuing by induction, we obtain: there exist $a(l, \xi; j_1, \ldots, j_s) \in \Fil^{p^{j_1} + \cdots + p^{j_s} + i + d - l - s}( \c O_{p^k})$ such that
\[
\sum_{j_1 = 1}^\infty p^{p^{j_1}-1} w_{j_1} = \sum_{s = 1}^\infty \sum_{j_1,  \ldots, j_s \geq 1} \sum_{l \leq p^{j_1} + \cdots + p^{j_s} + i - s} p^{p^{j_1} + \cdots + p^{p^{j_s}} - s} a(l, \xi; j_1, \ldots, j_s) e_l  \qquad \text{mod } D_{t^{p^k}}.
\]
In other words: for each $\bold{j} \in \c F$ there exists $a(l, \xi; \bold{j}) \in \Fil^{\rho(\bold{j})+ i + d -l } ( \c O_{p^k})$ such that
\[
\sum_{j_1 = 1}^\infty p^{p^{j_1}-1} w_{j_1} = \sum_{\bold{j} \in \c F} \sum_{l}  p^{\rho(\bold{j})}a(l, \xi; \bold{j}) e_l  \qquad \text{mod } D_{t^{p^k}}.
\]
Since $a(l, \xi; \b j) \in \Fil^{\rho(\b j) + i + d - l }( \c O_{p^k})$, we may write 
\[
p^{\rho(\b j)} a(l, \xi; \b j) =\sum p^{\rho(\b j)} A_k(l, \xi; r) \gamma^{r(n+1)/p^k} t^r,
\]
where $A_k(l, \xi; r) \in \f O_q$ and the sum on the right side runs over $\{r \mid {r(n+1)/p^k \leq \rho(\b j) + i + d - l}\}$. We have then
\[
\ord_p  p^{\rho(\b j)}A_k(l, \xi; r) \geq \rho{(\b j)} \geq r \left( \frac{n+1}{p^k} \right) - d + l - i,
\]
where the rightmost inequality is a consequence of the restrictions in the summation above. There remains the summand $\sum_{l = 0}^{\min\{i,n\}} a(l) e_l$ in (\ref{E: Sperber ***}) with $a(l) \in \Fil^{d+i-l}(\c O_{p^k})$. Since $a(l) \in \Fil^{d+i-l}(\c O_{p^k})$, we may write $a(l) = \sum A(l; r) \gamma^{r(n+1)/p^k} t^r$ where $A(l; r) \in \f O_q$ and the sum runs over $r$ satisfying $r(n+1)/p^k \leq d + i - l$. Thus,
\[
\ord_p A(l; r) \geq 0 \geq \frac{r(n+1)}{p^k} - d - i + l,
\]
where the second inequality comes from the restriction of $r$ in the summation. This proves $C(l, \xi) \in \c O_{p^k}(\frac{n+1}{p^k}; l - i - d)$ as desired.
\end{proof}

\subsection{Connection maps on relative cohomology and reduced relative cohomology}\label{SS: connections}

Recall that $K(t, x) := x_1 + \cdots + x_n + t/(x_1 \cdots x_n)$ and $H(t, x) := \sum_{i = 0}^\infty \gamma_i K(t^{p^i}, x^{p^i})$. Define $\partial := t \frac{d}{dt} +  t \frac{d}{dt} H(t, x)$. Since $\partial$ commutes with each $D_{t, l}$ for $1 \leq l \leq n$, it induces a connection map $\partial$ on $H^n( \Omega^\bullet(\c C, D_{t}))$.  Define $\partial^{(1)} :=  t \frac{d}{dt} + \gamma t \frac{\partial}{\partial t} K(t, x)$. Define $G(t, x) := \gamma t \frac{\partial}{\partial t} K(t, x) = \gamma t/(x_1 \cdots x_n)$, and $\partial^{(1)} :=  t \frac{d}{dt} + G(t, x)$. 

\begin{lemma}\label{L: conn}
We have
\begin{enumerate}
\item There exist $\nu_{p^r} \in F^{(0, p^r)}(\c C)$ such that
\begin{equation}\label{E: relate partial1 and partial}
\partial^{(1)} = \partial + \sum_{r \geq 1} p^{p^r - 1} \nu_{p^r}.
\end{equation}
\item For each $0 \leq l \leq n$, we may write in $H^n(\c C, D_t)$
\[
\partial^{(1)}(e_l) = \partial(e_l) + \sum_{\b j \in \c F} \sum_{m \leq \min\{ \rho(\b j) + l +1, n\}} p^{\rho(\b j)} b(m, l; \b j) e_m \qquad \mod D_t
\]
for some $b(m, l; \b j) \in \Fil^{\rho(\b j) + l - m + 1}(\c O)$.
\end{enumerate}
\end{lemma}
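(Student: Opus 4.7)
The proof will closely mirror the method used for the analogous pair $D^{(1)}_{t^{p^k}, l}$ and $D_{t^{p^k}, l}$ in the preceding lemma, since $\partial^{(1)}$ and $\partial$ differ only in whether one uses the ``leading'' piece $G(t,x) = \gamma t/(x_1 \cdots x_n)$ or the full Dwork tail coming from $H(t,x)$. For part (1), the plan is to directly compute $\partial - \partial^{(1)}$ using $t \frac{d}{dt} K(t^{p^i}, x^{p^i}) = p^i t^{p^i}/(x_1 \cdots x_n)^{p^i}$. This yields
\[
\partial = t\frac{d}{dt} + \gamma \frac{t}{x_1 \cdots x_n} + \sum_{i \geq 1} \gamma_i p^i \frac{t^{p^i}}{(x_1 \cdots x_n)^{p^i}}.
\]
The first two terms match $\partial^{(1)}$, and for each $i \geq 1$ the identity $\gamma_i p^i = p^{p^i - 1} \tau_i \gamma^{p^i}$ with $\tau_i$ a $p$-adic unit (used in the preceding lemma) extracts the factor $p^{p^i - 1}$. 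Setting $\nu_{p^i} := -\tau_i \gamma^{p^i} t^{p^i}/(x_1 \cdots x_n)^{p^i}$ then gives (\ref{E: relate partial1 and partial}). For the claim $\nu_{p^i} \in F^{(0, p^i)}(\c C)$, note that $u = (-p^i, \ldots, -p^i)$ satisfies $m(u) = p^i$ and $w(u) = -np^i + (n+1)p^i = p^i$, so $\gamma^{p^i} t^{p^i}/(x_1 \cdots x_n)^{p^i}$ is precisely the scaled orthonormal basis element indexed by $(r, u) = (0, u)$ in $F^{(0, p^i)}(\c C)$.

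For part (2), apply the relation from part (1) to $e_l$: $\partial^{(1)}(e_l) = \partial(e_l) + \sum_{r \geq 1} p^{p^r - 1} \nu_{p^r} e_l$. Observing that $e_l \in F^{(0, l)}(\c C)$ (for $l = 0$ trivially; for $l \geq 1$ with $u = (1,\ldots,1,0,\ldots,0)$ we have $m(u) = 0$ and $w(u) = l$), multiplicativity of the filtration yields $\nu_{p^r} e_l \in F^{(0, p^r + l)}(\c C)$. Now invoke Theorem \ref{T: main rel cohom reduction} with $k = 0$, $d = 0$, $i = p^r + l$ to rewrite $\nu_{p^r} e_l$ modulo $D_t$ as a ``trivial'' part $\sum_{m \leq \min\{p^r + l, n\}} a_0(m,r) e_m$ with $a_0(m,r) \in \Fil^{p^r + l - m}(\c O)$, plus an ``error'' part $\sum_{\b j' \in \c F} \sum_m p^{\rho(\b j')} a_0(m,r;\b j') e_m$ with $a_0(m,r;\b j') \in \Fil^{\rho(\b j') + p^r + l - m}(\c O)$.

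The finish is a clean reindexing. After multiplying by $p^{p^r - 1}$ and summing over $r \geq 1$: for each trivial term set $\b j := (r, 0, 0, \ldots) \in \c F$, so $\rho(\b j) = p^r - 1$; for each error term set $\b j := (r, j'_1, \ldots, j'_{s'}, 0, \ldots) \in \c F$, so $\rho(\b j) = (p^r - 1) + \rho(\b j')$. As $r$ ranges over $\{1, 2, \ldots\}$ and $\b j'$ over $\c F$, $\b j$ traverses all of $\c F$ exactly once (the first case exhausting the $s(\b j) = 1$ elements, the second case the $s(\b j) \geq 2$ elements). In both cases the relevant coefficient lies in $\Fil^{\rho(\b j) + l - m + 1}(\c O)$ and the range of $m$ is exactly $m \leq \min\{\rho(\b j) + l + 1, n\}$, matching the stated conclusion. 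The only real obstacle is the careful bookkeeping of filtrations, powers of $\gamma$, and the index $m$ during the reindexing; no deeper input is required.
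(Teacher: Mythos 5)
Your proposal is correct and follows essentially the same route as the paper: the same identity $\gamma_r p^r = p^{p^r-1}\tau_r\gamma^{p^r}$ with $\nu_{p^r} := -\tau_r\gamma^{p^r}K_t(t^{p^r},x^{p^r})$ for part (1), and for part (2) the same application of Theorem \ref{T: main rel cohom reduction} to $\nu_{p^r}e_l \in F^{(0,p^r+l)}(\c C)$ followed by absorbing the factor $p^{p^r-1}$ into a reindexed $\b j \in \c F$ (your prepending of $r$ versus the paper's appending is immaterial, as is your bijectivity remark, since only the filtration bounds depending on $\rho(\b j)$ matter).
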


\begin{proof}
Observe that $\partial^{(1)} = \partial - \sum_{r = 1}^\infty \gamma_r p^r K_t(t^{p^r}, x^{p^r})$, where $K_t(t, x) := t \frac{\partial}{\partial t} K(t, x) = t / (x_1 \cdots x_n)$. We now rewrite $\gamma_r p^r K_t(t^{p^r}, x^{p^r})$ as follows. First, note that we may write $\gamma_r p^r =  p^{p^r - 1} \tau_r \gamma^{p^r}$ with $\ord_p \tau_r = 0$. Then $\nu_{p^r} := - \tau_{r} \gamma^{p^r} K_t(t^{p^r}, x^{p^r}) \in \Fil^{(0, {p^r})}(\c C)$ gives  (\ref{E: relate partial1 and partial}).

Next, from the first part, we have $\partial^{(1)}(e_l) = \partial(e_l) + \sum_{r \geq 1} p^{p^r - 1} \nu_{p^r} e_l$, with $\nu_{p^r} e_l \in  \Fil^{(0, p^r + l)}(\c C)$. From Theorem \ref{T: main rel cohom reduction}, we may write in $H^n(\c C, D_t)$,
\[
\nu_{p^r} e_l = \sum a(m, r, l) e_m +  \sum_{\b j \in \c F} \sum p^{\rho(\b j)} a(m, r, l; \b j) e_m,
\]
where the first sum runs over $0 \leq m \leq \min \{n, p^r + l\}$ and $a(m, r, l) \in \Fil^{p^r + l - m}(\c O)$, and the second sum runs over $0 \leq m \leq \min \{n, \rho(\b j) + p^r +l \}$ and $a(m, r, l; \b j) \in \Fil^{\rho(\b j) + p^r + l - m}(\c O)$, and $a(m, r, l)$ and $a(m, r, l; \b j)$ equal zero if $m > n$. Thus
\begin{equation}\label{E: some crazy sums}
\sum_{r \geq 1} p^{p^r - 1} \nu_{p^r} e_l = \sum_{r \geq 1} \sum p^{p^r - 1} a(m, r, l) e_m +  \sum_{r \geq 1} \sum_{\bold{j} \in \c F} \sum p^{\rho(\b j) + p^r - 1} a(m, r, l; \b j) e_m
\end{equation}
where the inner sum in the first term on the right runs over $0 \leq m \leq  \min \{n, p^r + l\}$, and the innermost sum in the second term on the right runs over $0 \leq m \leq \min \{n, \rho(\b j) + p^r +l\}$.

Consider now the term $p^{p^r - 1} a(m, r, l)$ in (\ref{E: some crazy sums}), where $a(m, r, l) \in \Fil^{p^r + l - m}(\c O)$. Setting  $\b r := (r, 0, 0, \ldots) \in \c F$, then we may write
\[
p^{p^r - 1} a(m, r, l) = p^{\rho(\b r)} b(m, l; \b r)
\]
with $b(m, l; \b r) := a(m, r, l) \in \Fil^{\rho(\b r) + l - m + 1}(\c O)$. Next, consider the term $p^{\rho(\b j) + p^r - 1} a(m, r, l; \b j)$ in (\ref{E: some crazy sums}) where $a(m, r, l; \b j) \in \Fil^{\rho(\b j) + p^r + l - m}(\c O)$. Writing $\b j = (j_1, \ldots, j_s, 0, 0, \ldots)$, set $\b j' := (j_1, \ldots, j_s, r, 0, 0, \ldots) \in \c F$. Then
\[
p^{\rho(\b j) + p^r - 1} a(m, r, l; \b j) = p^{\rho(\b j')} b(m, l; \b j')
\]
where $b(m, l; \b j') := a(m, r, l; \b j) \in \Fil^{\rho(\b j') + l - m + 1}(\c O)$. That is, the righthand side of (\ref{E: some crazy sums}) may be written in the form
\[
\sum_{\b j \in \c F} \sum p^{\rho(\b j)} b(m, l; \b j) e_m
\]
where $b(m, l; \b j) \in \Fil^{\rho(\b j) + l - m + 1}(\c O)$ and $b(m, l; \b j) = 0$ if $m > n$, and the inner sum runs over $m \leq \min\{n, \rho(\b j) + l + 1\}$
\end{proof}

We now turn to the connection on reduced relative cohomology. Define $\oG(t, x) := t \frac{\partial}{\partial t} K(t, x) = t/(x_1 \cdots x_n)$, and $\bar \partial^{(1)} :=  t \frac{d}{dt} + \oG(t, x)$. Since $\bar \partial^{(1)}$ commutes with each $\oD_{t, l}^{(1)}$ for $1 \leq l \leq n$, it induces a connection map $\bar \partial^{(1)}$ on $H^n( \Omega^\bullet(\oC, \oD_{t}^{(1)}))$. Observe that in $H^n( \Omega^\bullet(\oC, \oD_{t}^{(1)}))$, $\oG$ acts on the basis by $\oG(\bar e_i) = \bar e_{i+1}$ if $0 \leq i < n$ and $\oG(\bar e_n) = t \bar e_0$. Thus, acting on column vectors, the matrix of $\oG$ with respect to the basis $\c B =\{ \bar e_i \}_{i=0}^n$ of $H^n( \Omega^\bullet(\oC, \oD_{t}^{(1)}))$ takes the form
\[
\oG = 
\left(
\begin{array}{ccccc}
 0 &   & \cdots & & t  \\
 1 &  0 &  \cdots & & \\
  &  1 &    &  &\\
  & & \ddots &  &\\
  & & & 1 & 0  
\end{array}
\right)
\]
Note that $\det(\oG - \lambda I) = (-\lambda)^{n+1} + (-1)^{n} t$, and thus the eigenvalues of $\oG$ are $\lambda = t^{1/(n+1)} \zeta_{n+1}^i$ for $i = 0, 1, \ldots, n$, where $\zeta_{n+1}$ is a primitive $(n+1)$-th root of unity in $\overline{\bb F}_p$.

\subsection{Reduced symmetric power cohomology}\label{SS: reduced sym cohom}

In this section we study the reduction of the complex $\Omega^\bullet(\c C, D_{t})$ modulo $\tilde \gamma$. Define the reduced symmetric power complex as follows. Set $\oSkH := \Sym^k_{\oO} H^n(\oC, \oD_t^{(1)})$, a free $\oO$-module with basis $\Sym^k \c B := \{ \bar e_0^{i_0} \cdots \bar e_n^{i_n} \mid i_j \in \bb Z_{\geq 0}, i_0 + \cdots + i_n = k \}$. We extend the connection map $\bar \partial^{(1)}$ from Section \ref{SS: connections} to a connection map $\bar \partial^{(1)}_k: \oSkH \rightarrow \oSkH$ via
\[
\bar \partial^{(1)}_k(\bar \xi \bar e_0^{i_0} \cdots \bar e_n^{i_n} ) := t \frac{d}{dt}(\bar \xi) \bar e_0^{i_0} \cdots \bar e_n^{i_n}  + \sum_{l = 0}^n i_j \bar \xi  \cdot \bar e_0^{i_0} \cdots \bar e_j^{i_j - 1} \cdots \bar e_n^{i_n} \cdot \bar \partial^{(1)}(\bar e_j).
\]
Similarly, define the map $\nabla_{\oG}:  \oSkH \rightarrow \oSkH$ by
\[
\nabla_{\oG}(\bar \xi \bar e_0^{i_0} \cdots \bar e_n^{i_n} ) := \sum_{l = 0}^n i_j \bar \xi  \cdot \bar e_0^{i_0} \cdots \bar e_j^{i_j - 1} \cdots \bar e_n^{i_n} \cdot \oG(\bar e_j).
\]
Observe that $\bar \partial^{(1)} = t \frac{d}{dt} + \nabla_{\oG}$. \

Define the complex $\Omega^\bullet(\oSkH, \bar \partial^{(1)}_k)$ by
\[
\Omega^0(\oSkH, \bar \partial^{(1)}_k) := \oSkH  \qquad \text{and} \qquad
\Omega^1(\oSkH, \bar \partial^{(1)}_k) := \oSkH  \frac{dt}{t}
\]
with boundary map $\bar \partial^{(1)}$ defined by $\bar \partial^{(1)}(\eta) := \bar \partial^{(1)}(\eta) \frac{dt}{t}$. Similarly, define the complex $\Omega^\bullet(\oSkH, \nabla_\oG)$. For the rest of this section, we will focus on the latter complex.

From Section \ref{SS: connections}, over the field $\bb K := \overline{\bb F}_q(t^{1/(n+1)})$  we may diagonalize the matrix $\oG$ by $\oG = P \tG P^{-1}$ with
\[
\tG = t^{1/(n+1)}
\left(
\begin{array}{cccc}
1 & & & \\
 & \zeta_{n+1}  &  &\\ 
 & & \zeta_{n+1}^2 & \\
 & &  & \ddots
\end{array}
\right).
\]
Define $\nabla_{\tG}$ similarly to that of $\nabla_{\oG}$, and define $\Sym^k(P)$ on $\oSkH \otimes \bb K$ by
\[
\Sym^k(P)( \bar \xi \bar e_0^{i_0} \cdots \bar e_n^{i_n} ) := \bar \xi \cdot (P \bar e_0)^{i_0} \cdots (P \bar e_n)^{i_n},
\]
and from this define $\Sym^k(P)$ on $\Omega^\bullet(\oSkH\otimes \bb K, \nabla_{\oG})$.

\begin{lemma}\label{L: conj G}
\[
\nabla_{\oG} = \Sym^k P \circ \nabla_{\tG} \circ \Sym^k P^{-1}.
\]
\end{lemma}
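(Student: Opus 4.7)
The plan is to prove the identity by direct computation on a spanning set, exploiting two standard functorial properties: $\nabla_A$ is the Leibniz-rule extension of a linear map $A : V \to V$ to $\Sym^k V$, and $\Sym^k P$ is an algebra homomorphism on the symmetric algebra. Once these are made explicit, the conjugation formula is essentially automatic.

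\textbf{Step 1 (reduction to pure products).} Write $V := H^n(\Omega^\bullet(\oC,\oD_t^{(1)}))$, so $\oSkH \otimes \bb K$ is spanned over $\oO \otimes \bb K$ by pure products $v_1 v_2 \cdots v_k$ with $v_i \in V \otimes \bb K$. Because the map $\Sym^k(P)$ defined in the excerpt acts by $\oO$-linear extension of $(v_1,\ldots,v_k) \mapsto P(v_1) \cdots P(v_k)$, it is an $\oO \otimes \bb K$-algebra isomorphism of $\Sym^\bullet(V \otimes \bb K)$, with inverse $\Sym^k(P^{-1})$. Likewise $\nabla_{\oG}$ and $\nabla_{\tG}$ are $\oO \otimes \bb K$-linear. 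Thus it suffices to verify the identity on pure products $v_1 \cdots v_k$; the coefficient $\bar \xi \in \oO \otimes \bb K$ passes through both sides unchanged, precisely because neither $\nabla_{\oG}$ nor $\nabla_{\tG}$ contains a $t\,d/dt$ term.

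\textbf{Step 2 (Leibniz computation).} Applying the three maps in turn and using that $\Sym^k(P)$ is an algebra homomorphism, I would compute
\begin{align*}
\Sym^k(P) \circ \nabla_{\tG} \circ \Sym^k(P^{-1}) (v_1 \cdots v_k)
&= \Sym^k(P) \circ \nabla_{\tG}\bigl(P^{-1}(v_1)\cdots P^{-1}(v_k)\bigr) \\
&= \Sym^k(P) \sum_{i=1}^{k} P^{-1}(v_1) \cdots \tG(P^{-1}(v_i)) \cdots P^{-1}(v_k) \\
&= \sum_{i=1}^{k} v_1 \cdots (P \tG P^{-1})(v_i) \cdots v_k \\
&= \sum_{i=1}^{k} v_1 \cdots \oG(v_i) \cdots v_k \;=\; \nabla_{\oG}(v_1 \cdots v_k),
\end{align*}
where the last equality uses the hypothesis $\oG = P \tG P^{-1}$ and the definition of $\nabla_{\oG}$.

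\textbf{Obstacles.} There is essentially no hard step: the statement is a general functoriality property of the derivation extension, applied fibrewise over $\oO \otimes \bb K$. The only point requiring mild care is book-keeping with the $\oO$-coefficient $\bar \xi$ in an element $\bar\xi \, \bar e_0^{i_0}\cdots \bar e_n^{i_n}$, which one must note is inert under each of $\oG$, $\tG$, and $P$, so that it may be pulled outside the sum and the three operators act only on the monomial part. With that observation, the verification in Step 2 finishes the proof.
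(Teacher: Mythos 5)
Your proof is correct and takes essentially the same route as the paper's: a direct computation using the Leibniz-rule definition of $\nabla$, the multiplicativity of $\Sym^k P$, and $\oG = P \tG P^{-1}$ (the paper simply inserts $PP^{-1}$ factors into $\nabla_{\oG}(\bar\xi\, \bar e_0^{i_0}\cdots \bar e_n^{i_n})$ and reads off the composite, which is your Step 2 run in reverse on basis monomials rather than general pure products).
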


\begin{proof}
Since $\oG = P \tG P^{-1}$, we have
\begin{align*}
\nabla_{\oG}( \bar \xi \bar e_0^{i_0} \cdots \bar e_n^{i_n} ) &:= \sum_{j=0}^n i_j \bar \xi \bar e_0^{i_0} \cdots \bar e_j^{i_j-1} \cdots e_n^{i_n} (\oG \bar e_j) \\
&= \sum_{j=0}^n i_j \bar \xi(PP^{-1} \bar e_0)^{i_0} \cdots  (P P^{-1} \bar e_j)^{i_j-1}  \cdots (P P^{-1} \bar e_n)^{i_n} (P \tG P^{-1} \bar e_j) \\
&= \Sym^k P \circ \nabla_{\tG} \circ \Sym^k P^{-1}  ( \bar \xi \bar e_0^{i_0} \cdots \bar e_n^{i_n}).
\end{align*}
\end{proof}

Denote by $\oG_k$ the matrix of $\nabla_\oG$ on $\oSkH$ with respect to the basis $\Sym^k \c B$. Our basic observation is that the nonsingularity of the matrix $\bar G_k$, or equivalently the injectivity of $\nabla_\oG$, will enable us to compute cohomology; in particular proving $H^0(\Omega^\bullet(\oSkH, \nabla_{\oG})) = 0$. We may then lift this to characteristic zero and estimate the Frobenius, as we will see in Section \ref{S: Sym Frob}. A useful sufficient condition for this nonsingularity appeared in the work in Fu-Wan \cite{MR2194679}, and it also appears here. Specifically, define
\[
d_k(n, p) := \# \{ (i_0, \ldots, i_n) \in \bb Z_{\geq 0}^{n+1} \mid i_0 + \cdots + i_n = k \text{ and }  \sum_{j=0}^n i_j \zeta_{n+1}^j \equiv 0 \text{ in } \overline{\bb F}_p \}.
\]

Our main results will require $d_k(n, p) = 0$. Whether it is zero or not appears to be a delicate question. For example, observe that when $k = 2$ and $n+1$ is odd, then $d_2(n, p) = 0$ for all primes $p \geq 3$. However, when $n = 5$, $k = 5$, and $p \equiv 5$ mod 6, then the 6-th cyclotomic polynomial $x^2 - x + 1$ over $\bb Q$ remains irreducible over $\bb F_p$. Then, from
\[
(1 - x + x^2)(1 + 2x + 2x^2) = 1 + x + x^2 + 2x^4,
\]
we see that $(i_0, i_1, \ldots, i_5) = (1, 1, 1, 0, 2, 0)$ is among the points counted by $d_5(5, p)$, making it non-zero in this case. We are indebted to Andy O'desky for the following argument.

\begin{theorem}
Suppose $n + 1$ is a power of a prime other than $p$, and $\gcd(n+1, k) = 1$. Then $d_k(n, p) = 0$ for all $p$ sufficiently large.
\end{theorem}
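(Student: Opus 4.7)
The plan is to reduce the question to a finiteness statement in characteristic zero. I will work with a primitive $(n+1)$-th root of unity $\zeta := \zeta_{n+1}$ in $\overline{\bb Q}$. For each composition $\ui = (i_0, \ldots, i_n) \in \bb Z_{\geq 0}^{n+1}$ with $i_0 + \cdots + i_n = k$, associate the polynomial $f_\ui(x) := \sum_{j=0}^n i_j x^j \in \bb Z[x]$ and the algebraic integer $s_\ui := f_\ui(\zeta) \in \bb Z[\zeta]$. There are only finitely many such compositions ($\binom{n+k}{n}$ in all), and my goal is to show that for $p$ outside a finite ``bad'' set, no $s_\ui$ can reduce to zero in $\overline{\bb F}_p$.

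The main step is the claim that $s_\ui$ is nonzero in $\bb Z[\zeta]$ for every such composition. If $s_\ui = 0$, then the minimal polynomial $\Phi_{n+1}(x)$ of $\zeta$ divides $f_\ui(x)$ in $\bb Z[x]$, and evaluating at $x = 1$ gives $\Phi_{n+1}(1) \mid f_\ui(1) = k$. Since $n+1 = \ell^a$ is a power of a prime $\ell$, the classical identity $\Phi_{\ell^a}(1) = \ell$ forces $\ell \mid k$, contradicting the hypothesis $\gcd(n+1, k) = 1$. This is the heart of the argument, and is also exactly where the prime-power hypothesis on $n+1$ is used: for composite $n+1$ with two or more distinct prime factors one has $\Phi_{n+1}(1) = 1$ and this mechanism collapses, consistent with the $n=5$, $k=5$ counterexample given in the text.

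Once $s_\ui \neq 0$ is known, the norm $N_\ui := N_{\bb Q(\zeta)/\bb Q}(s_\ui)$ is a nonzero rational integer, so the set $S$ of primes dividing $\ell \cdot \prod_\ui N_\ui$ is finite. For $p \notin S$, the prime $p$ is unramified in $\bb Z[\zeta]$, and every quotient $\bb Z[\zeta] \twoheadrightarrow \bb Z[\zeta]/\mathfrak{p} \hookrightarrow \overline{\bb F}_p$ for a prime $\mathfrak{p} \mid p$ sends $\zeta$ to a primitive $(n+1)$-th root of unity in $\overline{\bb F}_p$; in particular, the root $\zeta_{n+1} \in \overline{\bb F}_p$ fixed in the definition of $d_k(n, p)$ arises this way from some $\mathfrak{p}$. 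Because $p \nmid N_\ui = \prod_\sigma \sigma(s_\ui)$, the element $s_\ui$ lies outside every prime $\mathfrak{p} \mid p$, so its image $f_\ui(\zeta_{n+1})$ in $\overline{\bb F}_p$ is nonzero. Hence $d_k(n, p) = 0$ for every $p \notin S$, which is the conclusion. Beyond recognizing the role of $\Phi_{\ell^a}(1) = \ell$, the remainder is a routine invocation of the principle that a nonzero algebraic integer is divisible by only finitely many rational primes.
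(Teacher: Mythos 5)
Your proof is correct and is essentially the paper's argument: the key step in both is that $s_\ui = 0$ would force $\Phi_{n+1}(x) \mid f_\ui(x)$, and evaluating at $x = 1$ with $\Phi_{\ell^a}(1) = \ell$ contradicts $\gcd(n+1,k) = 1$. The paper phrases the finiteness contrapositively (a nonzero algebraic integer vanishes modulo only finitely many primes), while you make the same principle explicit and effective via norms and the choice of prime above $p$; this is a difference of presentation, not of method.
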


\begin{proof}
We will show that if $d_k(n, p) \not= 0$ for infinitely many primes $p$, then $\gcd(n+1, k)$ must be greater than 1. Let ${\bf i} = (i_0, \ldots, i_n) \in \bb Z_{\geq 0}^{n+1}$, and denote by $f_{\bf i}(x)$ the polynomial $i_0 + i_1 x + i_2 x^2 + \cdots + i_n x^n$. As there are only finitely many such polynomials with nonnegative integer coefficients whose sum is a fixed $k$, if $d_k(n, p)$ is nonzero for infinitely many primes $p$, then at least one of the algebraic integers $f_{\bf i}(\zeta_{n+1})$ must vanish modulo $p$ for infinitely many primes $p$. However, this means this algebraic integer must itself be zero, which implies that the $(n+1)$-st cyclotomic polynomial $\Phi_{n+1}(x)$ divides $f_{\bf i}(x)$ over $\bb Q$. Setting $x = 1$, we see that $\Phi_{n+1}(1)$ divides $k$. As we are assuming $n+1$ is a prime power $\ell^r$, then $\Phi_{n+1}(1) = \ell$ so $\ell \mid \gcd(n+1, k)$. Hence, $\gcd(n+1, k) \geq 2$.

Note that when $n+1$ is not a prime power, $\Phi_{n+1}(1) = 1$, and so this argument fails to show anything. 
\end{proof}

\begin{corollary}\label{C: G inj}
If $d_k(n, p) = 0$ then $\nabla_{\oG}$ is injective on $\oSkH$, (so $\oG_k$ is nonsingular),  and thus $H^0(\Omega^\bullet(\oSkH, \nabla_{\oG})) = 0$.
\end{corollary}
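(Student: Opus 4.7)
The plan is to reduce the injectivity of $\nabla_{\oG}$ to a direct eigenvalue computation after diagonalizing $\oG$ over the field $\bb K = \overline{\bb F}_q(t^{1/(n+1)})$. First, by Lemma~\ref{L: conj G}, on $\oSkH \otimes_{\oO} \bb K$ we have $\nabla_{\oG} = \Sym^k P \circ \nabla_{\tG} \circ \Sym^k P^{-1}$, so after the base change $\oO \hookrightarrow \bb K$ the operator $\nabla_{\oG}$ is conjugate to $\nabla_{\tG}$ and hence injective iff $\nabla_{\tG}$ is. Since $\oO = \bb F_q[t]$ is a domain and $\oSkH$ is free of finite rank over $\oO$, it embeds into $\oSkH \otimes_{\oO} \bb K$, so injectivity over $\bb K$ implies injectivity on $\oSkH$ itself (and equivalently that $\det \oG_k \neq 0$ in $\oO$).

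Next I would compute $\nabla_{\tG}$ explicitly in an eigenbasis. Choose a $\bb K$-basis $v_0, \ldots, v_n$ of $H^n(\oC, \oD_t^{(1)}) \otimes_{\oO} \bb K$ consisting of eigenvectors of $\tG$, so that $\tG v_j = t^{1/(n+1)} \zeta_{n+1}^j v_j$ for $j = 0, \ldots, n$. The monomials $v_0^{i_0} \cdots v_n^{i_n}$ with $i_0 + \cdots + i_n = k$ form a $\bb K$-basis of $\Sym^k_{\bb K} H^n(\oC, \oD_t^{(1)}) \otimes \bb K$, and the derivation formula defining $\nabla_{\tG}$ gives, for each multi-index $\ui = (i_0, \ldots, i_n)$ with $|\ui| = k$,
\[
\nabla_{\tG}(v_0^{i_0} \cdots v_n^{i_n}) = \sum_{j=0}^n i_j\, (\tG v_j)\, v_0^{i_0} \cdots v_j^{i_j-1} \cdots v_n^{i_n} = t^{1/(n+1)} \Bigl( \sum_{j=0}^n i_j \zeta_{n+1}^j \Bigr) v_0^{i_0} \cdots v_n^{i_n}.
\]
Thus $\nabla_{\tG}$ is diagonal in the monomial basis, with eigenvalue $t^{1/(n+1)} \sum_j i_j \zeta_{n+1}^j$ attached to the vector indexed by $\ui$.

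Finally, injectivity of $\nabla_{\tG}$ amounts to the non-vanishing of every diagonal entry. Since $t^{1/(n+1)} \neq 0$ in $\bb K$, this is equivalent to $\sum_{j=0}^n i_j \zeta_{n+1}^j \not\equiv 0$ in $\overline{\bb F}_p$ for every $\ui$ with $|\ui| = k$, which is exactly the hypothesis $d_k(n,p) = 0$. Combining with the first paragraph, $\nabla_{\oG}$ is injective on $\oSkH$, i.e.\ $\oG_k$ is nonsingular, and consequently $H^0(\Omega^\bullet(\oSkH, \nabla_{\oG})) = \ker(\nabla_{\oG}) = 0$. There is no real obstacle here; the only mild point worth emphasizing is the descent from injectivity over $\bb K$ back to $\oO$, which is automatic from freeness of $\oSkH$.
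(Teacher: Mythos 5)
Your proof is correct and follows essentially the same route as the paper: conjugate $\nabla_{\oG}$ to $\nabla_{\tG}$ via Lemma \ref{L: conj G}, observe that $\nabla_{\tG}$ is diagonal on the eigenmonomials with eigenvalues $t^{1/(n+1)}\sum_j i_j\zeta_{n+1}^j$, and conclude injectivity from $d_k(n,p)=0$. Your explicit remark on descending injectivity from $\bb K$ back to $\oSkH$ via freeness over $\oO$ is a point the paper leaves implicit, but the argument is the same.
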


\begin{proof}
Let $\{ v_i \}_{i=0}^n$ be an eigenbasis of $\tG$ in $H^n(\oC, \oD_t^{(1)}) \otimes \bb K$ with eigenvalues $\zeta_{n+1}^l t^{1/(n+1)}$ for $l = 0, 1, \ldots, n$. Then, in $\oSkH \otimes \bb K$, we have
\begin{align*}
\nabla_{\tG}( v_0^{i_0} \cdots v_n^{i_n}) &= \sum_{j=0}^n i_j v_0^{i_0} \cdots v_j^{i_j - 1} (\widetilde G v_j) \cdots v_n^{i_n} \\
&=  \sum_{j=0}^n i_j v_0^{i_0} \cdots v_j^{i_j - 1} (\zeta_{n+1}^j t^{1/(n+1)} v_j) \cdots v_n^{i_n} \\
&= \left( \sum_{j=0}^n i_j \zeta_{n+1}^j \right) t^{1/(n+1)} (v_0^{i_0} \cdots v_n^{i_n}),
\end{align*}
showing $\nabla_{\tG}$ is injective, and thus by Lemma \ref{L: conj G} we have $\nabla_{\oG}$ injective on $\oSkH$.
\end{proof}

We may now calculate the dimension of $H^1(\Omega^\bullet(\oSkH, \nabla_{\oG}))$ over $\bb F_q$. For $\ui = (i_0, \ldots, i_n) \in \bb Z_{\geq 0}^{n+1}$ and $|\ui| := i_0 + \cdots + i_n$, define $w(\ui) := 0 \cdot i_0 + 1 \cdot i_1 + \cdots + n \cdot i_n$. Set $\bar e^{\ui} := \bar e_0^{i_0} \cdots \bar e_n^{i_n}$. For monomials $t^r \bar e^{\ui} \in \oSkH$, define the weight function $W(r, \ui) := (n+1)r + w(\ui)$. Define a filtration on $\oSkH$ by
\[
\Fil^{N} \oSkH := 
\begin{Bmatrix}
\text{$\bb F_q$-vector space generated by } \\ \text{$t^r e^\ui$  with $W(r, \ui) \leq N$ } 
\end{Bmatrix},
\]
and observe that $\nabla_{\oG}: \Fil^{N} \oSkH \rightarrow \Fil^{N+1} \oSkH \frac{dt}{t}$ is a weight 1 map. We will also use the notations $\oO$ and $\oSkH$ for the associated graded ring and module, and again note that $\nabla_{\oG}$ is a homogeneous map of weight 1. Define
\[
{\overline {\c S}}_k^{(N)}  := 
\begin{Bmatrix}
\text{ $\bb F_q$-vector space generated by } \\ \text{$t^r e^\ui$  with $W(r, \ui) = N$ }
\end{Bmatrix}, 
\]
the homogeneous component of $\oSkH$ of weight $N$. Assuming $d_k(n, p) = 0$, by Corollary \ref{C: G inj} we have an exact sequence
\[
\begin{CD}
0 @>>> \oSkH @>\nabla_{\oG} >> \oSkH \frac{dt}{t} @>>> \oSkH  \frac{dt}{t} / \nabla_{\oG} \oSkH @>>> 0.
\end{CD}
\]
Let $P(T)$ denote the Poincar\'e series of $\oSkH$, and $Q(T)$ the Poincar\'e series of the quotient $\oSkH  \frac{dt}{t} / \nabla_{\oG} \oSkH$. Since $\nabla_{\oG}$ is injective and of weight 1, the Poincar\'e series of $\text{Image}(\nabla_{\oG})$ equals $T P(T)$. The exact sequence shows $Q(T) = (1-T) P(T)$. Now, as an $\bb F_q[t]$-module, 
\[
\oSkH  \cong \bigoplus_{i_0 + \cdots + i_n = k} \bb F_q[t] e_0^{i_0} \cdots e_n^{i_n}.
\]
Since $t$ has weight $n+1$, the Poincar\'e series of $\bb F_q[t]$ is $1 + T^{n+1} + T^{2(n+1)} + \cdots = 1/(1 - T^{n+1})$. The weight of $e_0^{i_0} \cdots e_n^{i_n}$ is $\sum_{j = 0}^n j \cdot i_j$, so define $R(T) := \sum_{i_0 + \cdots + i_n = k} T^{\sum_{j = 0}^n j \cdot i_j}$. Then the Poincar\'e series of $\oSkH$ equals $R(T) / (1 - T^{n+1})$, and
\[
Q(T) = (1 - T) \left( \frac{R(T)}{1-T^{n+1}} \right) = \frac{R(T)}{1 + T + \cdots + T^n}.
\]

\begin{lemma}\label{L: poly}
If $\gcd(n+1, k) = 1$ then $\frac{R(T)}{1 + T + \cdots + T^n}$ is a polynomial.
\end{lemma}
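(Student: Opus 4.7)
The plan is to use a simple cyclic symmetry in the coefficients. Since $1 + T + \cdots + T^n$ is monic with simple roots in $\bb C$ (precisely the non-trivial $(n+1)$-st roots of unity), it suffices by Gauss's lemma to verify that $R(\zeta) = 0$ for each such $\zeta$, and the quotient then automatically lies in $\bb Z[T]$.

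Fix $\zeta \ne 1$ with $\zeta^{n+1} = 1$, and recall the cyclic shift $\sigma: \ui = (i_0, i_1, \ldots, i_n) \mapsto (i_n, i_0, \ldots, i_{n-1})$ already used in Section~\ref{S: trivial factor}. This is a bijection on the index set $\{\ui \in \bb Z_{\geq 0}^{n+1} : |\ui| = k\}$, and a direct calculation of $\sum_j j \cdot (\sigma \ui)_j$ yields the identity
\[
w(\sigma \ui) = w(\ui) + k - (n+1) i_n.
\]
Using $\zeta^{n+1} = 1$, the substitution $\ui = \sigma \uj$ in the defining sum for $R(\zeta)$ gives
\[
R(\zeta) = \sum_{\uj} \zeta^{w(\sigma \uj)} = \zeta^k \sum_{\uj} \zeta^{w(\uj)} = \zeta^k R(\zeta),
\]
so $(1 - \zeta^k) R(\zeta) = 0$.

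If $d > 1$ denotes the order of $\zeta$, then $d \mid n+1$, and the hypothesis $\gcd(n+1, k) = 1$ forces $d \nmid k$; hence $\zeta^k \ne 1$ and $R(\zeta) = 0$. Since this holds for every non-trivial $(n+1)$-st root of unity, $(1 + T + \cdots + T^n) \mid R(T)$ in $\bb Q[T]$, and the monicness of the divisor upgrades this to $\bb Z[T]$. The only computation of any substance is the shift identity above, so there is no real obstacle to this approach; one could alternatively prove the lemma by recognizing $R(T)$ as the Gaussian binomial coefficient $\binom{n+k}{k}_T$ and tracking the order of vanishing of each factor in its product formula, but the symmetry argument is shorter and better reflects the role of $\sigma$ elsewhere in the paper.
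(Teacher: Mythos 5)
Your argument is correct: the shift identity $w(\sigma\ui) = w(\ui) + k - (n+1)i_n$ checks out, the evaluation $R(\zeta) = \zeta^k R(\zeta)$ at each non-trivial $(n+1)$-st root of unity is valid since $\sigma$ permutes the index set, and $\gcd(n+1,k)=1$ indeed forces $\zeta^k \neq 1$, so $R$ vanishes at every root of the (separable, monic) divisor $1 + T + \cdots + T^n$; exact divisibility, even in $\bb Z[T]$, follows. (The invocation of ``Gauss's lemma'' is unnecessary — division by a monic integer polynomial already does the job, as you note at the end — and the integrality is not even needed for the statement.)

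Your route differs from the paper's, though the two are close in spirit. The paper works modulo $1 + T + \cdots + T^n$: writing $C_r$ for the set of $\ui$ with $w(\ui) \equiv r \pmod{n+1}$, it reduces the lemma to the claim that $\# C_r$ is independent of $r$, and proves this claim by exhibiting two reflection-type involutions, $\sigma_1(\ui) = (i_0, i_n, \ldots, i_1)$ giving a bijection $C_r \to C_{-r}$ and $\sigma_2(\ui) = (i_n, \ldots, i_1, i_0)$ giving $C_r \to C_{-k-r}$, then using $\gcd(n+1,k)=1$ so that the multiples of $k$ exhaust all residues. You instead evaluate $R$ at roots of unity and use the single cyclic shift $\sigma$ (note $\sigma_2 \circ \sigma_1$ is a cyclic shift, so the same underlying symmetry is at work). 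What each buys: your argument is shorter and makes the role of the hypothesis transparent in one line, $(1-\zeta^k)R(\zeta)=0$; the paper's argument isolates the combinatorial statement that the weights $w(\ui)$ are equidistributed modulo $n+1$ (equivalent to your vanishing statement by Fourier inversion on $\bb Z/(n+1)$), stated as an explicit counting fact via bijections. Either proof suffices for the use made of the lemma.
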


\begin{proof}
Set
\[
I_k := \{ (i_0, \ldots, i_n) \in \bb Z_{\geq 0}^{n+1} \mid i_0 + \cdots + i_n = k \},
\]
and note that
\[
R(T) := \sum_{\ui \in I_k} T^{w(\ui)}.
\]
Setting $d_m := \# \{ \ui \in I_k \mid w(\ui ) = m \}$, we may write
\[
R(T) = \sum_{m=0}^{nk} d_m T^m = \sum_{r=0}^{n} \sum_{j=0}^M d_{(n+1)j+r} T^{(n+1)j+r}
\]
with $M = \lfloor \frac{nk}{n+1} \rfloor$. Consider $R(T)$ modulo $(1+T+T^2 + \cdots + T^n)$. In this case, since $T^{n+1} \equiv 1$,
\begin{equation}\label{E: Q(T)div}
R(T) \equiv \sum_{r=0}^{n} \sum_{j=0}^M d_{(n+1)j+r} T^r \quad \mod(1 + T + \cdots + T^n).
\end{equation}
Setting $C_r := \{ \ui \in I_k \mid w(\ui) \equiv r \text{ mod}(n+1)\}$, then 
\[
\# C_r = \sum_{j=0}^M d_{(n+1)j+r}.
\]
We claim that $\# C_r$ is independent of $r$. Supposing this is true then (\ref{E: Q(T)div}) shows
\[
R(T) \equiv \# C_0 \cdot ( 1 + T + \cdots + T^n ) \equiv 0 \quad \mod(1 + T + \cdots + T^n)
\]
and $\frac{R(T)}{1 + T +T^2 + ...T^n}$ is a polynomial.

Let us now prove the claim. Since $\gcd(k,n+1) =1$ the set 
\[
\{ (k r \text{ mod}(n+1)) \mid r = 0, 1, \ldots, n \}
\]
is a permutation of the set $\{ r \mid r = 0,1,\ldots, n\}$. Thus, we need only show that the cardinality of $C_{kr}$ is independent of $r$ for $r=0, 1, \ldots, n$. To do this, first notice that the map
\[
\ui := (i_0, \ldots, i_n) \mapsto \sigma_1(\ui) := (i_0, i_{n}, i_{n-1}, \ldots, i_1)
\]
is an involution from $C_r$ to $C_{-r}$ since $w(\sigma_1(\ui)) \equiv - w(\ui)$ mod$(n+1)$. Thus, $C_r$ and $C_{-r}$ are bijective. Similarly, we have an involution
\[
\ui := (i_0, \ldots, i_{n}) \mapsto \sigma_2(\ui) := (i_{n}, i_{n-1}, \ldots, i_1, i_0)
\]
from $C_r$ to $C_{-k-r}$ since, using $i_0 + \cdots + i_{n} = k$, we have $w( \sigma_2(\ui)) \equiv -k - w( \ui)$ mod$(n+1)$. Hence, $C_r$ and $C_{-k-r}$ are bijective. Using $\sigma_2$ and $\sigma_1$ successively, we see that
\[
\# C_0 = \# C_{-k} = \# C_{k} = \# C_{-2k} = \# C_{2k} = \cdots
\]
proving the claim.
\end{proof}

\begin{corollary}\label{C: nabla dim}
When $d_k(n, p) = 0$, $H^1(\Omega^\bullet(\oSkH, \nabla_{\oG}))$ is an $\bb F_q$-vector space of dimension $\frac{1}{n+1}\binom{k + n}{n}$.
\end{corollary}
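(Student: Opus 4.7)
The plan is to combine the Poincar\'e series computation already carried out just before the corollary with Lemma \ref{L: poly} applied to $k$ coprime to $n+1$. First, since Corollary \ref{C: G inj} gives that $\nabla_{\oG}$ is injective on $\oSkH$, the complex $\Omega^\bullet(\oSkH, \nabla_{\oG})$ has $H^0 = 0$ and $H^1 \cong \oSkH\tfrac{dt}{t}\big/\nabla_{\oG}\oSkH$. The Poincar\'e series of this cokernel was already shown to equal $Q(T) = R(T)/(1+T+\cdots+T^n)$, so all that remains is to show $Q(T)$ is a polynomial and to evaluate $Q(1)$.

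The key intermediate step is to verify that the hypothesis $d_k(n,p) = 0$ forces $\gcd(n+1, k) = 1$, so that Lemma \ref{L: poly} applies. If instead $d := \gcd(n+1, k) > 1$, set $e := (n+1)/d$ and consider the tuple $\ui = (i_0, \ldots, i_n)$ with $i_{\ell e} = k/d$ for $\ell = 0, 1, \ldots, d-1$ and zeros elsewhere. Then $|\ui| = k$ and
\[
\sum_{j=0}^n i_j \zeta_{n+1}^j = \frac{k}{d} \sum_{\ell = 0}^{d-1} (\zeta_{n+1}^e)^\ell = 0,
\]
since $\zeta_{n+1}^e$ is a primitive $d$-th root of unity (with $d > 1$) in $\overline{\bb F}_p$. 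This makes $\ui$ a point counted by $d_k(n, p)$, contradicting the hypothesis. Hence $\gcd(n+1, k) = 1$, and Lemma \ref{L: poly} guarantees that $Q(T)$ is a polynomial.

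Finally, since $Q(T)$ is a polynomial and the grading makes $H^1$ a direct sum of finite-dimensional weight spaces whose dimensions are the coefficients of $Q(T)$, the total $\bb F_q$-dimension of $H^1$ equals $Q(1)$. Computing, $R(1) = \#\{(i_0, \ldots, i_n) \in \bb Z_{\geq 0}^{n+1} \mid i_0 + \cdots + i_n = k\} = \binom{n+k}{n}$ and $(1 + T + \cdots + T^n)|_{T = 1} = n+1$, so
\[
\dim_{\bb F_q} H^1 = Q(1) = \frac{R(1)}{n+1} = \frac{1}{n+1}\binom{n+k}{n},
\]
as claimed. The main (small) obstacle is the coprimality deduction in the middle paragraph; everything else is bookkeeping on the Poincar\'e series already produced in the preceding discussion.
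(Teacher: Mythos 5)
Your proof is correct and follows essentially the same route as the paper: injectivity of $\nabla_{\oG}$ from Corollary \ref{C: G inj}, the Poincar\'e series identity $Q(T)=R(T)/(1+T+\cdots+T^n)$ for the cokernel, and evaluation at $T=1$ to obtain $\frac{1}{n+1}\binom{n+k}{n}$. Your explicit check that $d_k(n,p)=0$ forces $\gcd(n+1,k)=1$ (so that Lemma \ref{L: poly} applies and $Q(T)$ is a genuine polynomial, making $Q(1)$ the finite dimension) is a step the paper leaves implicit, and your argument for it --- the tuple supported on the multiples of $(n+1)/d$ with entries $k/d$, which pairs to zero against the $d$-th roots of unity since $\zeta_{n+1}^{(n+1)/d}\neq 1$ and $p\nmid n+1$ --- is correct.
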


\begin{proof}
Note that
\[
Q(1) = \left. \frac{R(T)}{1+T+ \cdots +T^n} \right|_{T = 1} = \frac{1}{n+1} R(1) = \frac{1}{n+1}\binom{k + n}{n}
\]
proving the dimension formula.
\end{proof}

While the following result is not necessary for our purposes, it may be useful for future endeavors, so we record it here. It states that there exists a basis consisting of ``constant vectors'', that is, a basis of vectors independent of the parameter $t$.

\begin{theorem}\label{T: constant basis}
Suppose $d_k(n, p) = 0$. Then there exists a subset $B_k \subseteq I_k$ (see the proof of Lemma \ref{L: poly} for the definition of $I_k$) such that  $\{ \bar e^\ui := \bar e_0^{i_0} \cdots \bar e_n^{i_n} :  \ui \in B_k \}$ is a basis of $H^1(\Omega^\bullet(\oSkH, \nabla_{\oG}))$.
\end{theorem}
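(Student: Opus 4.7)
My plan is to prove that the natural $\bb F_q$-linear map
\[
\phi: V_0 := \mathrm{span}_{\bb F_q}\{\bar e^\ui : \ui \in I_k\} \longrightarrow H^1(\Omega^\bullet(\oSkH, \nabla_\oG)),
\]
sending $\bar e^\ui$ to its class, is surjective; the required $B_k$ is then any subset of $I_k$ picking an $\bb F_q$-basis of $H^1$ from among $\{\phi(\bar e^\ui)\}$.

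First I observe that $H^1$ is a finite-length $\bb F_q[[t]]$-module supported at $t = 0$. Indeed $\nabla_\oG$ is $\bb F_q[t]$-linear, so the matrix $\oG_k$ has entries in $\bb F_q[t]$, and the diagonalization of Section \ref{SS: connections} shows that the eigenvalues of $\tG$ are scalar multiples of $t^{1/(n+1)}$, so $\det \oG_k$ equals a nonzero scalar times a positive power of $t$. Consequently
\[
H^1 / tH^1 = \oSkH/(\nabla_\oG \oSkH + t\oSkH) = V_0 / \oG_k|_{t=0} V_0,
\]
and the composition $V_0 \to H^1 \to H^1/tH^1$ is the canonical quotient, hence surjective. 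By Nakayama's lemma, $\phi(V_0)$ generates $H^1$ as an $\bb F_q[[t]]$-module.

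To upgrade from $\bb F_q[[t]]$-generation to an $\bb F_q$-spanning, it suffices to show $t \phi(V_0) \subseteq \phi(V_0)$ in $H^1$: iterating then yields $t^j \phi(V_0) \subseteq \phi(V_0)$ for all $j \geq 0$, whence $\phi(V_0) = H^1$. The basic tool for this is the identity
\[
\nabla_\oG(\bar e^{\uj}) = \sum_{l=0}^{n-1} j_l\, \bar e^{\uj + \b 1_{l+1} - \b 1_l} + j_n\, t\, \bar e^{\uj + \b 1_0 - \b 1_n},
\]
so that choosing $\uj := \ui + \b 1_n - \b 1_0$ (valid when $i_0 \geq 1$) exhibits $(i_n + 1)\, t\, \bar e^\ui$ modulo $\nabla_\oG \oSkH$ as an $\bb F_q$-linear combination of constants in $V_0$; this handles the indices $\ui$ with $i_0 \geq 1$ and $p \nmid i_n + 1$.

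The hard part will be handling the remaining indices $\ui$ with $i_0 = 0$ or $p \mid i_n + 1$. For these I plan to combine the identity above with its higher-$t$-power analogues
\[
\nabla_\oG(t^s \bar e^{\uj}) = \sum_{l=0}^{n-1} j_l\, t^s\, \bar e^{\uj + \b 1_{l+1} - \b 1_l} + j_n\, t^{s+1}\, \bar e^{\uj + \b 1_0 - \b 1_n}
\]
for various $s \geq 0$ and $\uj \in I_k$, producing a system of linear relations in $H^1$ among the $t^r \bar e^{\ui'}$ that is rich enough to solve for each $t \bar e^\ui$ as a constant. The hypothesis $d_k(n, p) = 0$, which by Corollary \ref{C: G inj} is equivalent to the nonvanishing of the diagonal scalars $c_\uj = \sum_l j_l \zeta_{n+1}^l$, is precisely what keeps this system nondegenerate; verifying this in all edge cases is the most delicate part of the argument. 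Once $t \phi(V_0) \subseteq \phi(V_0)$ has been proved, $\phi$ is surjective, and selecting $B_k \subseteq I_k$ to index an $\bb F_q$-basis of $H^1$ from among $\{\phi(\bar e^\ui)\}$ completes the proof.
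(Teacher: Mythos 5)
Your reduction is set up correctly as far as it goes: since $\nabla_{\oG}$ is $\bb F_q[t]$-linear and $\det \oG_k$ is a nonzero scalar times a power of $t$, the cokernel $H^1$ is indeed killed by a power of $t$, and the Nakayama step is valid (though superfluous: the monomials $t^r\bar e^{\ui}$ already span $\oSkH$ over $\bb F_q$, so their classes span $H^1$ over $\bb F_q[t]$ with no argument at all). The entire content of the theorem is therefore exactly the closure statement $t\,\phi(V_0)\subseteq\phi(V_0)$, and this is the step your proposal does not prove. You establish it only in the easy case $i_0\geq 1$, $p\nmid i_n+1$, via $\nabla_{\oG}(\bar e^{\uj})$ with $\uj=\ui+1_n-1_0$, and for the remaining indices you describe a plan (``a system of linear relations \dots rich enough to solve for each $t\bar e^{\ui}$'') rather than an argument. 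That missing case is the heart of the theorem, not an edge case.

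Moreover, the plan as stated has no additional leverage: because $\nabla_{\oG}$ is $t$-linear, the ``higher-$t$-power analogues'' $\nabla_{\oG}(t^s\bar e^{\uj})=t^s\nabla_{\oG}(\bar e^{\uj})$ are just $\bb F_q[t]$-multiples of the basic relations, so the system you propose to solve is precisely the $\bb F_q[t]$-span of $\{\nabla_{\oG}(\bar e^{\uj})\}_{\uj\in I_k}$, i.e.\ the image itself; deciding whether $t\bar e^{\ui}$ lies in $\phi(V_0)+\nabla_{\oG}\oSkH$ is the original problem restated, and the nonvanishing of the diagonal scalars $\sum_l j_l\zeta_{n+1}^l$ does not make it solvable term by term. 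Concretely, for $n=1$ and $\ui=(0,k)$ the only basic relation involving $\bar e_1^k$ is $\nabla_{\oG}(\bar e_0\bar e_1^{k-1})=\bar e_1^{k}+(k-1)t\,\bar e_0^{2}\bar e_1^{k-2}$; multiplying it by $t$ and substituting it into itself returns the tautology $0\equiv 0$, so no constant expression for $t\bar e_1^{k}$ falls out of the obvious relations, and one must genuinely exploit the global structure. The paper closes this by a maximal-weight descent rather than a case analysis: choose a monomial basis of $H^1$, assume some $t^m\bar e^{\ui}$ in it is not equivalent to a sum of constant vectors and pick one of maximal weight $W(m,\ui)$; write $t^{m+1}\bar e^{\ui}=u+\nabla_{\oG}(\xi)$ with $u$ constant (possible by maximality, $\nabla_{\oG}$ having weight $1$), observe $t\nmid\xi$ (else one could divide by $t$ and contradict $t^m\bar e^{\ui}\neq 0$ in $H^1$), split $\xi=v+tw$ and $v=E+E'$ according to whether $\bar e_n$ occurs, and compare $t$-free parts to conclude $t^m\bar e^{\ui}=R+\nabla_{\oG}(w)$ with $R$ constant --- a contradiction. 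Some descent of this kind (in the weight or in the $t$-adic filtration) is what your argument is missing; without it the proof is incomplete.
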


\begin{proof}
We will refer to elements of the form $\bar e^\ui$ as constant vectors. Suppose no such basis of constant vectors exists. Fix a basis $B$, then there exists an element $t^m \bar e^\ui \in B$ which is not equivalent to a sum of constant vectors in $H^1$. Let $t^m \bar e^\ui$ have the largest weight $W(m, \ui)$ among all such non-constant basis terms in $B$; this may not be unique.

As $\nabla_{\oG}$ is a weight-1 map, the vector $t^{m+1} e^\ui$ can be written as $t^{m+1} e^\ui = u + \nabla_{\oG}(\xi)$ with $u$ a sum of constant vectors; this is possible since  if there is any non-constant basis term in $u$, then it would necessarily have a larger weight than $t^m e^\ui$ which we are assuming is impossible. Now, if $t \mid \xi$ then $t \mid \nabla_{\oG}(\xi)$, which means $t \mid u$, implying $u = 0$. If this is the case, then we have $t^{m} e^\ui =  \nabla_{\oG}(\xi/t)$, which is impossible. Thus, $\xi = v + tw$ where $v$  is a linear combination of constant vectors.

Next, write $v = E + E'$ where $E$ is a sum of constant vectors whose terms do not include $\bar e_n$, and $E'$ is a sum of constant vectors whose terms have $e_n$ to some power. Then $Q := \nabla_{\oG}(E)$ is a sum of constant vectors, and $\nabla_{\oG}(E') = P + tR$ where $P$ is a sum of constant vectors, and $R$ is a sum of constant vectors. Thus,
\[
t^{m+1} \bar e^\ui = u + Q + P + tR + t \nabla_{\oG}(w).
\]
Note that we must have $u + Q + P = 0$ since all other terms are divisible by $t$, hence
\[
t^{m+1} \bar e^\ui = tR + t \nabla_{\oG}(w),
\]
which implies $t^m \bar e^\ui = R + \nabla_{\oG}(w)$. That is, $t^m \bar e^\ui$ is equivalent to a sum of constant vectors in cohomology, a contradiction. 
\end{proof}

\begin{corollary}\label{C: nabla_G reduction}
Assume $d_k(n, p) = 0$. For $\bar \xi \in {\overline {\c S}}_k^{(N)}$, there exists $\bar a_{\ui} \in \bb F_q$ with $\ui \in B_k$ and $w(\ui) = N$, and $\bar \zeta \in {\overline {\c S}}_k^{(N-1)}$, such that
\[
\bar \xi = \sum_{\ui \in B_k, w(\ui) = N} \bar a_{\ui} \bar e^{\ui} + \nabla_{\oG}(\bar \zeta).
\]
\end{corollary}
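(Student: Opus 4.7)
The plan is to deduce this from Theorem \ref{T: constant basis} by carefully tracking the grading. Recall that $\oSkH$ carries a grading in which $t$ has weight $n+1$, $\bar e_j$ has weight $j$, and $\nabla_{\oG}$ is a homogeneous map of weight $1$. In particular, the quotient $H^1(\Omega^\bullet(\oSkH,\nabla_{\oG})) = \oSkH\,\tfrac{dt}{t}/\nabla_{\oG}\oSkH$ inherits a well-defined grading from $\oSkH$, and the map $\oSkH \to H^1$ sending $\bar\eta \mapsto \bar\eta\,\tfrac{dt}{t}$ mod $\nabla_{\oG}\oSkH$ respects weights. Since each basis element $\bar e^{\ui}$ furnished by Theorem \ref{T: constant basis} is homogeneous of weight $w(\ui)$, the weight-$N$ part of $H^1$ is precisely the $\bb F_q$-span of $\{\bar e^{\ui}\mid \ui \in B_k,\ w(\ui)=N\}$.

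Now let $\bar\xi \in {\overline{\c S}}_k^{(N)}$. By Theorem \ref{T: constant basis} applied to the class of $\bar\xi$ in $H^1$, there exist $\bar a_{\ui}\in \bb F_q$ for $\ui \in B_k$ and some $\bar\zeta'\in\oSkH$ with
\[
\bar\xi = \sum_{\ui \in B_k} \bar a_{\ui}\,\bar e^{\ui} + \nabla_{\oG}(\bar\zeta').
\]
Decompose $\bar\zeta' = \sum_{M\ge 0} \bar\zeta'_M$ into its homogeneous components, where $\bar\zeta'_M \in {\overline{\c S}}_k^{(M)}$. Because $\nabla_{\oG}$ raises weight by exactly $1$, the identity above splits as a direct sum of homogeneous identities. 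Projecting onto the weight-$N$ component gives
\[
\bar\xi = \sum_{\substack{\ui \in B_k \\ w(\ui)=N}} \bar a_{\ui}\,\bar e^{\ui} + \nabla_{\oG}(\bar\zeta'_{N-1}),
\]
and setting $\bar\zeta := \bar\zeta'_{N-1} \in {\overline{\c S}}_k^{(N-1)}$ yields the claim.

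The only substantive point to check is that the hypothesis $d_k(n,p)=0$ has been used, but this enters exactly through Theorem \ref{T: constant basis} (which in turn rests on the injectivity of $\nabla_{\oG}$ from Corollary \ref{C: G inj}); the extraction of homogeneous parts is then purely formal. There is no real obstacle — the result is essentially the graded refinement of Theorem \ref{T: constant basis}, and the argument reduces to observing that $\nabla_{\oG}$ is a homogeneous map of weight $1$ on the graded module $\oSkH$.
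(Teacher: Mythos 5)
Your proof is correct and matches the paper's intent: the corollary is stated there without proof, as an immediate consequence of Theorem \ref{T: constant basis} together with the fact that $\nabla_{\oG}$ is homogeneous of weight $1$ on the graded module $\oSkH$, which is precisely the graded-projection argument you spell out. Nothing is missing.
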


Since $\bar \partial^{(1)} = t \frac{d}{dt} + \nabla_{\overline{G}}$ it follows in a standard way, from Corollaries \ref{C: nabla dim} and \ref{C: nabla_G reduction}, that:

\begin{theorem}
Suppose $d_k(n, p) = 0$. Then the complex $\Omega^\bullet(\oSkH, \bar \partial^{(1)})$ has cohomology $H^0 = 0$, and $H^1$ an $\bb F_q$-vector space of dimension $\frac{1}{n+1}\binom{k+n}{n}$ with basis $\{ \bar e^{\ui} \}_{\ui \in B_k}$. Further, for $\bar \xi \in \Fil^N \oSkH$, there exists $\bar a_{\ui} \in \bb F_q$ with $\ui \in B_k$ with $w(\ui) \leq N$, and $\bar \zeta \in \Fil^{N-1} \oSkH$, such that
\begin{equation}\label{E: reduced sym}
\bar \xi = \sum_{\ui \in \overline{B}_k, w(\ui) \leq N} \bar a_{\ui} \bar e^{\ui} + \bar \partial^{(1)}(\bar \zeta).
\end{equation}
\end{theorem}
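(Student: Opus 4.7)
The plan is to leverage the decomposition $\bar \partial^{(1)} = t\frac{d}{dt} + \nabla_{\oG}$. Here $t\frac{d}{dt}$ preserves the weight filtration, since it acts on each monomial $t^r \bar e^\ui$ by the scalar $r$, while $\nabla_{\oG}$ is homogeneous of weight $+1$. Consequently, on the associated graded of $\oSkH$ with respect to $\Fil^\bullet$, the differential $\bar \partial^{(1)}$ induces exactly $\nabla_{\oG}$, and I will transfer the results of Section \ref{SS: reduced sym cohom} to $\bar \partial^{(1)}$ by a successive approximation in the weight filtration.

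For $H^0 = 0$, I would argue by contradiction using the top weight. Suppose $\bar \partial^{(1)}(\bar \xi) = 0$ with $\bar \xi \in \Fil^N$ and its weight-$N$ component $\bar \xi_N$ nonzero. Since $t\frac{d}{dt}$ sends $\Fil^N$ into $\Fil^N$ and $\nabla_{\oG}$ sends $\Fil^{N-1}$ into $\Fil^N$, the weight-$(N+1)$ component of $\bar \partial^{(1)}(\bar \xi)$ is exactly $\nabla_{\oG}(\bar \xi_N)$. Corollary \ref{C: G inj} then forces $\bar \xi_N = 0$, a contradiction.

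For the reduction (\ref{E: reduced sym}), I would induct on $N$. Given $\bar \xi \in \Fil^N \oSkH$, split $\bar \xi = \bar \xi_N + \bar \xi_{<N}$ into its weight-$N$ part and lower. Applying Corollary \ref{C: nabla_G reduction} to $\bar \xi_N \in \overline{\c S}_k^{(N)}$ yields
\[
\bar \xi_N = \sum_{\ui \in B_k,\, w(\ui) = N} \bar a_\ui \bar e^\ui + \nabla_{\oG}(\bar \zeta_N), \qquad \bar \zeta_N \in \overline{\c S}_k^{(N-1)}.
\]
Substituting $\nabla_{\oG}(\bar \zeta_N) = \bar \partial^{(1)}(\bar \zeta_N) - t\frac{d}{dt}(\bar \zeta_N)$ gives
\[
\bar \xi = \sum_{\ui \in B_k,\, w(\ui)=N} \bar a_\ui \bar e^\ui + \bar \partial^{(1)}(\bar \zeta_N) + \bigl(\bar \xi_{<N} - t\tfrac{d}{dt}(\bar \zeta_N)\bigr),
\]
where the parenthesized remainder lies in $\Fil^{N-1}$ since $t\frac{d}{dt}$ preserves weight. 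Invoking the inductive hypothesis on this remainder completes the step.

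Spanning of $H^1(\Omega^\bullet(\oSkH, \bar \partial^{(1)}))$ by $\{\bar e^\ui\}_{\ui \in B_k}$ is immediate from (\ref{E: reduced sym}). For linear independence, suppose $\sum_{\ui \in B_k} \bar c_\ui \bar e^\ui = \bar \partial^{(1)}(\bar \zeta)$. Let $N_{\max}$ be the largest $w(\ui)$ with $\bar c_\ui \neq 0$ and $N_\zeta$ the top weight appearing in $\bar \zeta$. The weight-$(N_\zeta+1)$ component of the right side is $\nabla_{\oG}(\bar \zeta_{N_\zeta})$, which by Corollary \ref{C: G inj} is nonzero unless $\bar \zeta_{N_\zeta} = 0$; comparison forces $N_\zeta + 1 \leq N_{\max}$. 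The weight-$N_{\max}$ equation then reads $\sum_{w(\ui)=N_{\max}} \bar c_\ui \bar e^\ui = \nabla_{\oG}(\bar \zeta_{N_\zeta})$, and Theorem \ref{T: constant basis} applied in the graded complex forces $\bar c_\ui = 0$ for $w(\ui) = N_{\max}$ (and also $\bar \zeta_{N_\zeta} = 0$, so the top weight of $\bar \zeta$ strictly decreases). Iterating yields all $\bar c_\ui = 0$, establishing the dimension $\frac{1}{n+1}\binom{k+n}{n}$. The main obstacle, though routine, is the weight bookkeeping between the weight-preserving $t\frac{d}{dt}$ and the weight-raising $\nabla_{\oG}$; the key point is exactly that they sit in different graded degrees, which allows all cancellations to be detected one weight at a time.
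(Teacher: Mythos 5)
Your proposal is correct and is exactly the ``standard'' filtered/graded argument the paper invokes: since $\bar \partial^{(1)} = t\frac{d}{dt} + \nabla_{\oG}$ with $t\frac{d}{dt}$ weight-preserving and $\nabla_{\oG}$ homogeneous of weight $1$, the results for $\nabla_{\oG}$ (Corollaries \ref{C: nabla dim} and \ref{C: nabla_G reduction}, injectivity from Corollary \ref{C: G inj}) transfer to $\bar\partial^{(1)}$ by exactly your weight-by-weight induction; the paper leaves these details unwritten, and you supply them correctly (the only cosmetic point is that in the independence step the conclusion $\bar\zeta_{N_\zeta}=0$ is a contradiction to the choice of $N_\zeta$ rather than a ``decrease,'' which is how the downward induction should be phrased).
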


\subsection{Symmetric power cohomology}

We now lift the results from the previous section. Define $\SkH := Sym^k_{\c O} H^n( \Omega^\bullet(\c C, D_{t})) $, a free $\c O$-module of rank $\binom{n+k}{n}$ with basis $\Sym^k \c B := \{ e^\ui := e_0^{i_0} \cdots e_n^{i_n} : | \ui | = k \}$, where $\ui = (i_0, \ldots, i_n) \in \bb Z_{\geq 0}^{n+1}$ and $|\ui| := i_0 + \cdots + i_n$. Define $w(\ui) := 0 \cdot i_0 + 1 \cdot i_1 + \cdots + n \cdot i_n$. For monomials $\gamma^{(n+1)r} t^r e^{\uu} \in \SkH$, we define the weight function $W(r, \uu) := (n+1)r + w(\uu)$. As every element of $\SkH$ is a finite sum of terms of the form $\xi e_0^{i_0} \cdots e_n^{i_n}$ with $\xi \in \c O$, we may extend the definition of $\partial$ to $\SkH$ via
\[
\partial( \xi e_0^{i_0} \cdots e_n^{i_n} ) := t \frac{d \xi}{dt} e^{\ui} + \sum_{l = 0}^n i_j \xi  \cdot e_0^{i_0} \cdots e_j^{i_j - 1} \cdots e_n^{i_n} \cdot \partial(e_j).
\]
Similarly, define $\partial^{(1)} := t \frac{d}{dt} + \nabla_G$ on $\SkH$.

Define the complex $\Omega^\bullet(\SkH, \partial)$ by
\[
\Omega^0(\SkH, \partial) := \SkH \qquad \text{and} \qquad \Omega^1(\SkH, \partial) := \SkH \frac{dt}{t}
\]
with boundary map $\partial \eta := \partial(\eta) \frac{dt}{t}$. Denote its cohomology by $H^i(\SkH, \partial)$ for $i = 0, 1$. 

Define an increasing filtration
\[
\Fil^{N} \SkH := 
\begin{Bmatrix}
\text{$\f O_q$-module generated by $\gamma^{(n+1)r} t^r e^\ui$ } \\ \text{ with $e^\ui \in \Sym^k \c B$, $r \geq 0$, and $W(r, \ui) \leq N$ } 
\end{Bmatrix}.
\]
Observe that the reduction of the complex $\Omega^\bullet(\SkH, \partial)$ mod $\tilde \gamma$ with respect to the orthonormal basis $\{\gamma^{(n+1)r} t^r e^\ui\}$ is isomorphic to the complex $\Omega^\bullet(\oSkH, \bar \partial^{(1)})$ from Section \ref{SS: reduced sym cohom}. As a consequence, we have:

\begin{lemma}\label{L: lift partial(1)}
For $\xi \in \Fil^N \SkH$, there exists $a_{\ui} \in \f O_q$ with $\ui \in B_k$ with $w(\ui) \leq N$, and $\zeta \in \Fil^{N-1} \SkH$, such that
\[
\xi = \sum_{\ui \in B_k, w(\ui) \leq N}  a_{\ui}  e^{\ui} + \partial^{(1)}(\zeta).
\]
\end{lemma}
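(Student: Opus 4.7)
The plan is a successive $\tilde\gamma$-adic approximation that lifts the reduced identity (\ref{E: reduced sym}) to characteristic zero. As noted in the paragraph immediately preceding the lemma, reduction modulo $\tilde\gamma$ with respect to the orthonormal basis $\{\gamma^{(n+1)r} t^r e^\ui\}$ identifies $\Omega^\bullet(\SkH, \partial^{(1)})$ with $\Omega^\bullet(\oSkH, \bar\partial^{(1)})$, and the filtrations correspond: $\gamma^{(n+1)r} t^r e^\ui$ reduces to $t^r \bar e^\ui \in \Fil^{W(r,\ui)} \oSkH$.

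Given $\xi \in \Fil^N \SkH$, I would reduce modulo $\tilde\gamma$ to get $\bar\xi \in \Fil^N \oSkH$ and apply (\ref{E: reduced sym}) to produce $\bar a_\ui^{(0)} \in \bb F_q$ (with $\ui \in B_k$, $w(\ui) \leq N$) together with $\bar\zeta^{(0)} \in \Fil^{N-1} \oSkH$ satisfying the reduced identity. Lift each $\bar a_\ui^{(0)}$ Teichm\"uller-style to $a_\ui^{(0)} \in \f O_q$ and choose any filtration-respecting lift $\zeta^{(0)} \in \Fil^{N-1} \SkH$ of $\bar\zeta^{(0)}$; then the error $\xi - \sum_\ui a_\ui^{(0)} e^\ui - \partial^{(1)}(\zeta^{(0)})$ reduces to zero modulo $\tilde\gamma$, so it has the form $\tilde\gamma\, \xi^{(1)}$. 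Iterating produces sequences $a_\ui^{(j)}, \zeta^{(j)}, \xi^{(j+1)}$, and setting
\[
a_\ui := \sum_{j \geq 0} \tilde\gamma^j a_\ui^{(j)}, \qquad \zeta := \sum_{j \geq 0} \tilde\gamma^j \zeta^{(j)},
\]
the $\tilde\gamma$-adic convergence (in $\f O_q$ and in the finitely generated, hence $\tilde\gamma$-adically complete, $\f O_q$-module $\Fil^{N-1} \SkH$) together with telescoping yields $\xi = \sum_{\ui} a_\ui e^\ui + \partial^{(1)}(\zeta)$.

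The main obstacle is a bookkeeping check that $\partial^{(1)}$ carries $\Fil^{N-1} \SkH$ into $\Fil^N \SkH$---the characteristic-zero analogue of the weight-$1$ property of $\bar\partial^{(1)}$ used in Section \ref{SS: reduced sym cohom}---so that the iteration genuinely stays within the filtered pieces demanded by the conclusion. Writing $\partial^{(1)} = t\frac{d}{dt} + \nabla_G$, the $t\frac{d}{dt}$ summand is weight $0$; for $\nabla_G$, since $G \cdot e_j \in F^{(0, j+1)}(\c C)$ at the chain level, Theorem \ref{T: main rel cohom reduction} expresses $G e_j$ in $H^n(\c C, D_t)$ as a weight-$(j+1)$ leading term plus $p$-adically damped corrections of shape $p^{\rho(\bj)} a(l;\bj) e_l$ of possibly higher weight. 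Because $p \in \tilde\gamma^{q(p-1)} \f O_q^*$, these damped corrections are absorbed into the $\tilde\gamma$-adic iteration without inflating the filtration of the limit; this is essentially the same device used in the proof of Lemma \ref{L: conn}, and the rest of the argument is routine.
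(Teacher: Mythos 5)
Your overall route --- reduce modulo $\tilde\gamma$, invoke (\ref{E: reduced sym}), lift, and iterate $\tilde\gamma$-adically --- is exactly what the paper's one-line ``standard lifting argument'' intends, so the strategy matches. The gap is at the step you yourself single out as the main obstacle, and your resolution of it does not hold up. The operator $\partial^{(1)} = t\frac{d}{dt} + \nabla_G$ does \emph{not} carry $\Fil^{N-1}\SkH$ into $\Fil^{N}\SkH$: for $j<n$ the class of $G e_j = \gamma^{j+1}t/(x_{j+1}\cdots x_n)$ in $H^n(\c C, D_t)$, written in the basis $e_0,\ldots,e_n$, is its weight-$(j+1)$ leading part plus correction terms whose coefficients are genuinely infinite series in $t$, controlled only by the estimates of Theorem \ref{T: main rel cohom reduction}; such an element lies in no finite filtration step $\Fil^M\SkH$ at all, and nothing in the paper (or in your argument) makes these corrections vanish. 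Consequently, already after your first approximation the error $\tilde\gamma\,\xi^{(1)}$ need not lie in $\Fil^N\SkH$ (nor in any $\Fil^M\SkH$ with controlled $M$), so (\ref{E: reduced sym}) cannot be re-applied at level $N$; if you apply it at a higher level $M>N$ you produce indices $\ui$ with $w(\ui)\leq M$ and lifts $\zeta^{(j)}\notin\Fil^{N-1}\SkH$, which destroys exactly the bounds the lemma asserts. The appeal to $p\in\tilde\gamma^{\,q(p-1)}\f O_q^{\times}$ conflates $p$-adic smallness with membership in the weight filtration: $\Fil^{N-1}\SkH$ is a finite free $\f O_q$-module cut out by a weight bound, multiplying a higher-weight term by powers of $\tilde\gamma$ never lowers its weight, and dividing the error by $\tilde\gamma$ at each step progressively strips the damping; so the corrections are not ``absorbed,'' and $\tilde\gamma$-adic completeness of $\Fil^{N-1}\SkH$ does not place the limit $\zeta=\sum_j\tilde\gamma^j\zeta^{(j)}$ in it. Nor is this the device of Lemma \ref{L: conn}: that lemma does not absorb the corrections, it records them explicitly as terms $p^{\rho(\bj)}b(m,l;\bj)e_m$ with $b(m,l;\bj)$ in \emph{higher} filtration steps.

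To repair the argument you must make the weight-versus-$p$-divisibility trade-off explicit --- i.e.\ carry along terms of the shape $p^{\rho(\bj)}w(\bj)$ with $w(\bj)\in\Fil^{N+\rho(\bj)}\SkH$ and recurse, which is precisely the mechanism of Lemma \ref{L: w_jr} and Theorem \ref{T: sym power cohom}; note you cannot defer to those results, since they take the present lemma as input. Alternatively, one can run the lifting purely on finite free modules: compose $(a_\ui,\zeta)\mapsto\sum a_\ui e^\ui+\partial^{(1)}(\zeta)$ with the projection of $\SkH$ onto the span of the orthonormal basis vectors of weight $\leq N$; the reduction of this truncated map modulo $\tilde\gamma$ is surjective onto $\Fil^N\oSkH$ by (\ref{E: reduced sym}) (truncation acts trivially there because $\bar\partial^{(1)}$ raises weight by at most one), so Nakayama over the complete discrete valuation ring $\f O_q$ gives surjectivity. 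But this yields the identity of the lemma only up to a $p$-divisibly damped tail of weight $>N$ coming from $\partial^{(1)}(\zeta)$, so you must either prove that tail vanishes or keep it and propagate it --- it has exactly the form needed in Lemma \ref{L: w_jr}, so the downstream argument is unaffected. As written, your proof does not establish $\zeta\in\Fil^{N-1}\SkH$ or the restriction $w(\ui)\leq N$, which are the entire content of the lemma beyond the basis statement.
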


\begin{proof}
This follows from a standard lifting argument of (\ref{E: reduced sym}).
\end{proof}

\begin{lemma}\label{L: w_jr}
For $\xi \in \Fil^N \SkH$, there exists $a_{\ui} \in \f O_q$, $\zeta \in \Fil^{N-1} \SkH$, and  $w(\bj) \in \Fil^{N + \rho(\bj)} \c S_k$  for  $\bj \in \c F,$ such that 
\begin{equation}\label{E: start of recursion}
\xi = \sum_{\ui \in B_k, w(\ui) \leq N}  a_{\ui}  e^{\ui} + \partial(\zeta)  +  \sum_{\bj \in \c F} p^{\rho(\bj)} w(\bj).
\end{equation}
\end{lemma}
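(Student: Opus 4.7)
The plan is straightforward: first invoke Lemma \ref{L: lift partial(1)} to reduce $\xi$ modulo $\partial^{(1)}$, and then convert $\partial^{(1)}(\zeta)$ into $\partial(\zeta)$ plus $p$-power corrections using the second part of Lemma \ref{L: conn}. Concretely, Lemma \ref{L: lift partial(1)} applied to $\xi \in \Fil^N \SkH$ yields $a_\ui \in \f O_q$ (with $\ui \in B_k$, $w(\ui) \leq N$) and $\zeta \in \Fil^{N-1} \SkH$ satisfying
\[
\xi = \sum_{\ui \in B_k,\, w(\ui) \leq N} a_\ui e^\ui + \partial^{(1)}(\zeta),
\]
so it only remains to express $\partial^{(1)}(\zeta) - \partial(\zeta)$ in the desired form.

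Both $\partial$ and $\partial^{(1)}$ act on $\SkH$ as derivations over $\c O$ extending the connection maps on $H^n(\c C, D_t)$ by the Leibniz rule, so their difference satisfies
\[
(\partial^{(1)} - \partial)(\eta e_0^{i_0} \cdots e_n^{i_n}) = \sum_{j = 0}^n i_j \, \eta \, e_0^{i_0} \cdots e_j^{i_j - 1} \cdots e_n^{i_n} \bigl( \partial^{(1)}(e_j) - \partial(e_j) \bigr)
\]
for $\eta \in \c O$. The second part of Lemma \ref{L: conn} expresses each bracket as $\sum_{\bj \in \c F} \sum_m p^{\rho(\bj)} b(m, j; \bj) e_m$ in $H^n(\c C, D_t)$, with $b(m, j; \bj) \in \Fil^{\rho(\bj) + j - m + 1}(\c O)$. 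Substituting and collecting by $\bj$ produces explicit $w(\bj) \in \SkH$ with
\[
\partial^{(1)}(\zeta) - \partial(\zeta) = \sum_{\bj \in \c F} p^{\rho(\bj)} w(\bj),
\]
so that combining with the first display yields the claimed formula.

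The one point requiring care is the filtration bound $w(\bj) \in \Fil^{N + \rho(\bj)} \SkH$. For any monomial $\gamma^{(n+1)r} t^r e^\ui$ contributing to $\zeta$ (of weight $(n+1)r + w(\ui) \leq N - 1$), the associated term in $w(\bj)$ is, up to a unit scalar, $\gamma^{(n+1)(r+s)} t^{r+s} e_0^{i_0} \cdots e_j^{i_j - 1} \cdots e_m^{i_m + 1} \cdots e_n^{i_n}$, where $\gamma^{(n+1)s} t^s$ runs through the support of $b(m, j; \bj)$ and thus satisfies $(n+1) s \leq \rho(\bj) + j - m + 1$. The weight of this term is then
\[
(n+1)(r + s) + w(\ui) + (m - j) \leq (N - 1) + (\rho(\bj) + j - m + 1) + (m - j) = N + \rho(\bj),
\]
confirming the claim. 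The main obstacle is entirely weight bookkeeping; conceptually, this is the natural symmetric-power analogue of the recursion performed on $\c C$ in the proof of Theorem \ref{T: main rel cohom reduction}, and it sets up the recursive expansion that will presumably be iterated in the subsequent argument to yield the full Frobenius estimates.
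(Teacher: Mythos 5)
Your proposal is correct and follows essentially the same route as the paper: apply Lemma \ref{L: lift partial(1)}, replace $\partial^{(1)}(e_j)$ by $\partial(e_j)$ plus the $p^{\rho(\bj)}b(m,j;\bj)e_m$ corrections from Lemma \ref{L: conn}, and verify the weight bound $N+\rho(\bj)$ by the same bookkeeping (your $(n+1)(r+s)+w(\ui)+(m-j)$ estimate is the paper's computation in slightly different packaging). No gaps.
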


\begin{proof}
From Lemma \ref{L: lift partial(1)}, there exists $a_{\ui} \in \f O_q$ with $\ui \in B_k$ with $w(\ui) \leq N$, and $\zeta \in \Fil^{N-1} \SkH$, such that
\[
\xi = \sum_{\ui \in B_k, w(\ui) \leq N}  a_{\ui}  e^{\ui} + \partial^{(1)}(\zeta).
\]
Write $\zeta = \sum C(r, \ui) \gamma^{(n+1)r} t^r e^\ui$ where $C(r, \ui) \in \f O_q$ and the sum runs over $r \geq 0$ and $\ui$ such that $(n+1)r + w(\ui) \leq N-1$. In particular, for each summand $W(t^r e^{\ui}) \leq N-1$. Then
\begin{align*}
\partial^{(1)}( \gamma^{(n+1)r} t^r e^\ui) &= t \frac{d}{dt}( \gamma^{(n+1)r} t^r) e^\ui + \gamma^{(n+1)r} t^r \partial^{(1)}(e^\ui) \\
&= t \frac{d}{dt}( \gamma^{(n+1)r} t^r) e^\ui + \gamma^{(n+1)r} t^r \sum_{l=0}^n i_l e_0^{i_0} \cdots e_l^{i_l - 1} \cdots e_n^{i_n} \cdot \partial^{(1)}(e_l).
\end{align*}
From Lemma \ref{L: conn}, we may write
\[
\partial^{(1)}(e_l) = \partial(e_l) + \sum_{\b j \in \c F} \sum_{m \leq \rho(\b j) + l + 1} p^{\rho(\b j)} b(m, l; \b j) e_m \qquad \mod D_t
\]
for some $b(m, l; \b j) \in \Fil^{\rho(\b j) + l - m + 1}(\c O)$ and $b(m, l; \b j) = 0$ if $m > n$. Plugging this into the above, we have 

\[
\partial^{(1)}( \gamma^{(n+1)r} t^r e^\ui) = \partial( \gamma^{(n+1)r} t^r e^\ui) + \sum_{\bj \in \c F} p^{\rho(\bj)} w(\bj)
\]
where
\[
w(\bj) := \sum_{l= 0}^n \sum_{m \leq \rho(\bj) + l + 1} \gamma^{(n+1)r} t^r i_l \cdot b(m, l; \bj) e_0^{i_0} \cdots e_l^{i_l - 1} \cdots e_n^{i_n} e_m
\]
with the weight of a summand satisfying 
\begin{align*}
W(b(m, l; \bj) \cdot t^r e_0^{i_0} \cdots e_l^{i_l - 1} \cdots e_n^{i_n} e_m) &\leq (\rho(\b j) + l - m + 1) + (N-1-l)  + m \\
&= N + \rho(\b j),
\end{align*}
so that $w(\bj) \in \Fil^{\rho(\bj) + N} \c S_k$. The result follows.
\end{proof}

\begin{theorem}\label{T: sym power cohom}
Suppose $d_k(n, p) = 0$.Then $H^0( \SkH, \partial ) = 0$, and $H^1( \SkH, \partial )$ is a free $\f O_q$-module of rank $\frac{1}{n+1} \binom{n+k}{n}$ with basis $B_k$. Further, for $\xi \in \Fil^N \SkH$ there exists $C(\ui, \xi) \in \f O_q$ satisfying $\ord_p C(\ui, \xi) \geq w(\ui) - N$ such that in $H^1( \SkH, \partial )$
\[
\xi = \sum_{\ui \in B_k} C(\ui, \xi) e^\ui  \qquad \text{mod } \partial.
\]
\end{theorem}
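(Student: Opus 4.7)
My plan has three parts: establish $H^0 = 0$ via a reduction mod $\tilde\gamma$ argument, derive the reduction formula for $H^1$ by iterating Lemma \ref{L: w_jr}, and use that formula to conclude $\{e^\ui\}_{\ui \in B_k}$ is an $\f O_q$-basis of the required rank. For $H^0 = 0$, Lemma \ref{L: conn} gives $\partial^{(1)} - \partial = \sum_{r \geq 1} p^{p^r - 1} \nu_{p^r}$; since $\ord_{\tilde\gamma}(p) = q(p-1) > 0$, this operator difference vanishes mod $\tilde\gamma$, so the reduction of $\partial$ on $\SkH$ mod $\tilde\gamma$ equals $\bar\partial^{(1)}$ on $\oSkH$. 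Given a nonzero $\xi \in \ker\partial$, I would factor $\xi = \tilde\gamma^m \xi'$ with $\bar\xi' \neq 0$; then $\partial \xi' = 0$ forces $\bar\partial^{(1)}(\bar\xi') = 0$, contradicting the vanishing of $H^0(\oSkH, \bar\partial^{(1)})$ established at the end of Section \ref{SS: reduced sym cohom}.

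For the reduction formula, the idea is to iterate Lemma \ref{L: w_jr} indefinitely. Starting from (\ref{E: start of recursion}) applied to $\xi \in \Fil^N \SkH$, each tail $w(\bj) \in \Fil^{N + \rho(\bj)} \SkH$ is fed back into the lemma, producing constant-vector contributions, a new coboundary, and second-generation tails $w(\bj_1, \bj_2) \in \Fil^{N + \rho(\bj_1) + \rho(\bj_2)} \SkH$ each carrying an additional prefactor $p^{\rho(\bj_2)}$; iterating, modulo $\partial$ one collects
\[
\xi \equiv \sum_{\ui \in B_k,\, w(\ui) \leq N} a_\ui^{(0)} e^\ui + \sum_{r \geq 1} \sum_{(\bj_1, \ldots, \bj_r) \in \c F^r} p^{\rho(\bj_1) + \cdots + \rho(\bj_r)} \sum_{\substack{\ui \in B_k \\ w(\ui) \leq N + \sum_i \rho(\bj_i)}} a_\ui(\bj_1, \ldots, \bj_r)\, e^\ui.
\]
Defining $C(\ui, \xi)$ as the resulting $e^\ui$-coefficient, the bound $\ord_p C(\ui, \xi) \geq w(\ui) - N$ follows summand-by-summand: either one has the leading term $a_\ui^{(0)}$ (nonzero only when $w(\ui) \leq N$), or one has the factor $p^{\sum_i \rho(\bj_i)}$ with $\sum \rho(\bj_i) \geq w(\ui) - N$ by the filtration constraint. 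Convergence of the series in $\f O_q$ is automatic: only finitely many $\bj \in \c F$ satisfy $\rho(\bj) \leq T$ for any fixed $T$, so only finitely many tuples $(\bj_1, \ldots, \bj_r)$ (summed over all lengths $r$) have $\sum \rho(\bj_i) = T$.

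For the basis claim, spanning is immediate from the reduction formula. For linear independence, suppose $\sum_{\ui \in B_k} C(\ui) e^\ui = \partial\eta$ with $C(\ui) \in \f O_q$ and $\eta \in \SkH$. Reducing mod $\tilde\gamma$ gives $\sum \bar C(\ui) \bar e^\ui = \bar\partial^{(1)}(\bar\eta)$ in $\oSkH$; since $\{\bar e^\ui\}_{\ui \in B_k}$ is a basis of $H^1(\oSkH, \bar\partial^{(1)})$, this forces $\bar C(\ui) = 0$ for every $\ui$ and $\bar\eta \in H^0(\oSkH, \bar\partial^{(1)}) = 0$. Hence $\tilde\gamma \mid C(\ui)$ and $\tilde\gamma \mid \eta$; dividing through by $\tilde\gamma$ and repeating gives $C(\ui) \in \bigcap_{m \geq 0} \tilde\gamma^m \f O_q = 0$. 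The rank then equals $\# B_k = \frac{1}{n+1}\binom{n+k}{n}$ by Corollary \ref{C: nabla dim}.

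The main obstacle is the bookkeeping in the recursion: cleanly tracking both the filtration level and the $p$-power prefactor across each iteration of Lemma \ref{L: w_jr}, ensuring $p$-adic convergence of the doubly-indexed sum over all lengths $r$ and all $r$-tuples in $\c F^r$, and verifying that the filtration constraint $w(\ui) \leq N + \sum_i \rho(\bj_i)$ at each stage collapses to the sharp bound $\ord_p C(\ui, \xi) \geq w(\ui) - N$. Once that recursion is set up correctly, the $H^0$ vanishing and the independence statement are both routine $\tilde\gamma$-adic lifting arguments.
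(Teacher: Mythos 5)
Your proposal is correct and follows essentially the same route as the paper: the $H^0$ vanishing and the freeness of $H^1(\SkH,\partial)$ with basis $\{e^\ui\}_{\ui\in B_k}$ come from the mod-$\tilde\gamma$ identification with $\Omega^\bullet(\oSkH,\bar\partial^{(1)})$ plus the standard lifting argument (which the paper leaves implicit and you spell out), and the estimate $\ord_p C(\ui,\xi)\geq w(\ui)-N$ is obtained exactly as in the paper by iterating Lemma \ref{L: w_jr} and using the filtration constraint $w(\ui)\leq N+\rho(\bj_1)+\cdots+\rho(\bj_s)$ on each summand.
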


\begin{proof}
Since the reduction of the complex $\Omega^\bullet(\SkH, \partial)$ mod $\tilde \gamma$ with respect to the orthonormal basis $\{\gamma^{(n+1)r} t^r e^\ui\}$ is isomorphic to the complex $\Omega^\bullet(\oSkH, \bar \partial^{(1)})$, and $\Omega^\bullet(\oSkH, \bar \partial^{(1)})$ has cohomology $H^0(\oSkH, \bar \partial^{(1)}) = 0$, and $H^1(\oSkH, \bar \partial^{(1)})$ an $\bb F_q$-vector space of dimension $\frac{1}{n+1}\binom{k+n}{n}$ with basis $\{ \bar e^{\ui} \}_{\ui \in B_k}$, a standard lifting argument shows $H^0( \SkH, \partial ) = 0$, and $H^1( \SkH, \partial )$ is a free $\f O_q$-module of rank $\frac{1}{n+1} \binom{n+k}{n}$ with basis $B_k$. 

Let $\xi \in \Fil^N \SkH$. By Lemma \ref{L: w_jr}, there exists $a_{\ui} \in \f O_q$ and $w(\bj_1) \in \Fil^{N + \rho(\bj)} \c S_k$ for each $\bj_1 \in \c F$ such that 
\[
\xi = \sum_{\ui \in B_k, w(\ui) \leq N}  a_{\ui}  e^{\ui}  +  \sum_{\bj_1 \in \c F} p^{\rho(\bj_1)} w(\bj_1) \qquad \mod \partial.
\]
We now recurse on $\bj_1$. By Lemma \ref{L: w_jr} there exist $a_\ui(\bj_1) \in \f O_q$ and $w(\bj_1, \bj_2) \in  \Fil^{N + \rho(\bj_1) + \rho(\bj_2)} \c S_k$ for each $\b j_2 \in \c F$ such that
\[
w(\bj_1) = \sum_{\ui \in B_k, w(\ui) \leq N + \rho(\bj_1)} a_\ui(\bj_1) e^\ui + \sum_{\bj_2 \in \c F} p^{\rho(\bj_2)} w(\bj_1, \bj_2).
\]
Then
\[
\xi = \sum_{\ui \in B_k, w(\ui) \leq N}  a_{\ui}  e^{\ui}  + \sum_{\bj_1 \in \c F} \sum_{\ui \in B_k, w(\ui) \leq N + \rho(\bj_1)} p^{\rho(\bj_1)} a_\ui(\bj_1) e^\ui + \sum_{\bj_1, \bj_2 \in \c F} p^{\rho(\bj_1) + \rho(\bj_2)} w(\bj_1, \bj_2).
\]
Continuing via induction, we have
\[
\xi = \sum_{\ui \in B_k, w(\ui) \leq N}  a_{\ui}  e^{\ui} + \sum_{s \geq 1} \sum_{\bj_1, \ldots, \bj_s \in \c F} \sum_{\substack{\ui \in B_k \\ w(\ui) \leq N + \rho(\bj_1) + \cdots + \rho(\bj_s)}} p^{\rho(\bj_1) + \cdots + \rho(\bj_s)} a_\ui(\bj_1, \ldots, \bj_s) e^\ui.
\]
The last summation runs over all $\ui$ such that $w(\ui) - N \leq  \rho(\bj_1) + \cdots + \rho(\bj_s)$, thus
\begin{align*}
\ord_p p^{\rho(\bj_1) + \cdots + \rho(\bj_s)} a_\ui(\bj_1, \ldots, \bj_s) &\geq \rho(\bj_1) + \cdots + \rho(\bj_s) \\
&\geq w(\ui) - N
\end{align*}
proving the theorem.
\end{proof}

\section{Frobenius}\label{S: Sym Frob}

\subsection{Estimates on the relative Frobenius}\label{S: Rel Frob}

Recall from Section \ref{S: cohom formula} that $\alpha_1(t^{p^r}): \c C_{p^r} \rightarrow \c C_{p^{r+1}}$, and since $p D_{l, t^{p^{r+1}}} \circ \alpha_1(t^{p^r}) = \alpha_1(t^{p^r}) \circ D_{l, t^{p^r}}$ for each $1 \leq l \leq n$, we may define a chain map $\Frob^\bullet(\alpha_1(t^{p^r})):  \Omega^\bullet(\c C_{p^r}, D_{t^{p^r}}) \rightarrow \Omega^\bullet(\c C_{p^{r+1}}, D_{t^{p^r+1}})$ by acting  on each factor $\c C_{p^r}\frac{dx_{j_1}}{x_{j_1}} \wedge \cdots \wedge \frac{dx_{j_i}}{x_{j_i}}$ of $\Omega^i(\c C_{p^r}, D_{t^{p^r}})$  via
\begin{equation}\label{E: chain map}
\Frob^i(\alpha_1(t^{p^r}))(\xi \frac{dx_{j_1}}{x_{j_1}} \wedge \cdots \wedge \frac{dx_{j_i}}{x_{j_i}})  = p^{n-i} \alpha_1(t^{p^r})(\xi) \frac{dx_{j_1}}{x_{j_1}} \wedge \cdots \wedge \frac{dx_{j_i}}{x_{j_i}}.
\end{equation}
For these complexes, $H^i = 0$ for all $i \not= n$. Set $\bar \alpha_1(t^{p^r}) := \Frob^n(\alpha_1(t^{p^r})): H^n(\c C_{p^r}, D_{t^{p^r}}) \rightarrow H^n(\c C_{p^{r+1}}, D_{t^{p^{r+1}}})$.
For notational convenience, we will write $\alpha_1$ for $\alpha_1(t)$. For each $0 \leq i \leq n$, with $e_i := \gamma^i x_1 x_2 \cdots x_i$, write
\begin{equation}\label{E: Frob on ei}
\alpha_1(e_i) = \sum_{u \in \bb Z^n} A(u, i) \gamma^{w(u)} t^{p m(u)} x^u \qquad \text{with } A(u, i) \in \c O_p.
\end{equation}

\begin{lemma}\label{L: chain level est}
For every $0 \leq i \leq n$ and $u \in \bb Z^n$, $A(u, i) \in \c O_p(\frac{n+1}{p}; w(u))$.
\end{lemma}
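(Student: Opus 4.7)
The plan is to unwind the definition of $\alpha_1(e_i)$ directly and then extract the coefficient $A(u,i)$ in the orthonormal basis $\{\gamma^{w(u)} t^{pm(u)} x^u\}$. Since $e_i = \gamma^i x^{\epsilon_i}$ with $\epsilon_i := (1,\ldots,1,0,\ldots,0)$ having $i$ ones, and $\alpha_1 = \psi_x \circ F(t,x)$, one gets
\[
\alpha_1(e_i) = \gamma^i \sum_{u \in \bb Z^n} B(pu - \epsilon_i) x^u,
\]
so matching with (\ref{E: Frob on ei}) gives the explicit identification $A(u,i) = \gamma^{i - w(u)} t^{-pm(u)} B(pu - \epsilon_i)$.

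Next I would use the explicit formula for $B(pu - \epsilon_i)$ from (\ref{E: Frob coeff est}): it is the sum over $l \geq m(pu - \epsilon_i)$ of $\theta_{pu_1 - (\epsilon_i)_1 + l} \cdots \theta_{pu_n - (\epsilon_i)_n + l} \theta_l t^l$. Substituting $r = l - pm(u)$ and expanding $A(u,i) = \sum_r a(r) \gamma^{r(n+1)/p} t^r$, one has
\[
a(r) = \gamma^{\,i - w(u) - r(n+1)/p}\, \theta_{pu_1 - (\epsilon_i)_1 + r + pm(u)} \cdots \theta_{pu_n - (\epsilon_i)_n + r + pm(u)}\, \theta_{r + pm(u)}.
\]
Applying $\ord_p \theta_j \geq j/(p-1)$ to the product of $\theta$'s yields a lower bound on $\ord_p a(r)$ of
\[
\frac{i - w(u) - r(n+1)/p}{p-1} + \frac{p \sum_j u_j - i + (n+1)(r + pm(u))}{p-1}.
\]

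The key step is the cancellation in the numerator. Using the identity $w(u) = \sum_j u_j + (n+1) m(u)$ (Proposition \ref{P: w and m}), the terms $p\sum u_j + (n+1)pm(u)$ collapse to $pw(u)$, and after combining with $-w(u)$ and $-r(n+1)/p + (n+1)r = (n+1)r(p-1)/p$, everything simplifies cleanly to
\[
\ord_p a(r) \;\geq\; w(u) + \frac{(n+1)r}{p}.
\]
This is precisely the estimate $A(u,i) \in \c O_p(\frac{n+1}{p}; w(u))$.

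The only substantive obstacle is ensuring the range condition $l \geq m(pu - \epsilon_i)$ is compatible with writing $r = l - pm(u) \geq 0$ so that the expansion is a power series in $t$ with non-negative exponents; since $m$ is convex (Proposition \ref{P: w and m}), one checks $m(pu - \epsilon_i) \geq pm(u) - m(\epsilon_i) = pm(u)$ so indeed $r \geq 0$. All the remaining steps are mechanical once the cancellation identity is in hand.
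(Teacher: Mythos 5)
Your proof is correct and follows essentially the same route as the paper: identify $A(u,i) = \gamma^{i-w(u)}t^{-pm(u)}B(pu-\epsilon_i)$, apply $\ord_p\theta_j \geq j/(p-1)$ termwise, and use $w(u)=\sum_j u_j+(n+1)m(u)$ to collapse the exponent to $w(u)+\frac{(n+1)r}{p}$, exactly as in the paper's computation (\ref{E: ord est}). The closing check that $l \geq m(pu-\epsilon_i) \geq pm(u)$ so $r \geq 0$ is a detail the paper leaves implicit, and it is verified correctly.
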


\begin{proof}
Recall from Section \ref{S: cohom formula} that $\alpha_1 := \psi_x \circ F(t, x)$, where $F(t, x) := \theta(x_1) \cdots \theta(x_n) \cdot \theta(t/(x_1 \cdots x_n))$, and $\theta(T) :=  E(\gamma T) =\sum_{j=0}^{\infty} \theta_j T^j$ has coefficients $\theta_j \in \bb Z_p[\gamma]$ satisfying $\text{ord}_p(\theta_j) \geq \frac{j}{p-1}$. Write $F(t, x) = \sum_{u \in \bb Z^n} B(u) x^u$ and note that
\[
B(u) := \sum \theta_{m_1} \theta_{m_2} \cdots \theta_{m_n} \theta_l t^l,
\]
where the sum runs over all $(m_1, \ldots, m_n, l) \in \bb Z_{\geq 0}^{n+1}$ such that $l \geq m(u)$ and $m_j - l = u_j$ for $1 \leq j \leq n$. It follows that $A(u, i) = \gamma^{i-w(u)} B(pu-e_i) t^{-p m(u)}$, where $pu - e_i$ means $pu - (1, 1, \ldots, 1, 0, \ldots, 0)$ and $i$ entries in the latter are 1. Then
\begin{align*}
\gamma^{i-w(u)} B(pu-e_i) t^{-p m(u)} &= \sum \theta_{m_1} \theta_{m_2} \cdots \theta_{m_n} \theta_l \gamma^{i - w(u)} t^{l - p m(u)} \\
&= \sum \left( \theta_{m_1} \theta_{m_2} \cdots \theta_{m_n} \theta_l \gamma^{i - w(u)} \gamma^{-(l-pm(u))(n+1)/p} \right) \gamma^{(l-pm(u))(n+1)/p)} t^{l - pm(u)},
\end{align*}
where both sums runs over all $(m_1, \ldots, m_n, l) \in \bb Z_{\geq 0}^{n+1}$ such that $l \geq m(pu - e_i)$ and $m_k - l = p u_k - 1$ for $1 \leq k \leq i$ and $m_k - l = p u_k$ for $i < k \leq n$.  Using this, and $w(u) = \sum_{i=1}^n u_i + (n+1) m(u)$, we see that
\begin{align}\label{E: ord est}
\ord_p  \left( \theta_{m_1} \theta_{m_2} \cdots \theta_{m_n} \theta_l \gamma^{i - w(u)} \gamma^{-(l-pm(u))/p} \right) &\geq \frac{1}{p-1}\left( \sum_{k=1}^n m_k + l + i - w(u) \right) - \frac{l-pm(u)}{p(p-1)}(n+1) \notag \\
&= \frac{1}{p-1}\left( p \sum_{k=1}^n u_k + (n+1)l  - w(u) \right) - \frac{l-pm(u)}{p(p-1)}(n+1) \notag \\
&= \frac{1}{p-1}\left( p(w(u) - (n+1)m(u)) + (n+1)l - w(u) \right) - \frac{l-pm(u)}{p(p-1)}(n+1)  \notag \\
&= w(u) + \frac{n+1}{p}(l - pm(u)).
\end{align}
Hence, $A(u, i) \in \c O_p(\frac{n+1}{p}; w(u))$.
\end{proof}

By definition, $\bar \alpha_1 := \Frob^n(\alpha_1(t)): H^n(\c C, D_{t}) \rightarrow H^n(\c C_{p}, D_{t^{p}})$ and so there exist $\tilde A(j, i) \in \c O_p$ such that
\[
\bar \alpha_1(e_i) = \sum_{j = 0}^n \tilde A(j, i) e_j \qquad \mod D_{t^p}
\]

\begin{theorem}\label{T: tilde A matrix}
$\tilde A(j,i) \in \c O_p(\frac{n+1}{p}; j)$. 
\end{theorem}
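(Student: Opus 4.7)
The plan is to apply the relative cohomology reduction result (Theorem \ref{T: main rel cohom reduction} with $k=1$) to the chain-level formula for $\alpha_1(e_i)$ provided by Lemma \ref{L: chain level est}, and then keep careful track of how the $p$-adic estimate on the Laurent coefficients $A(u,i)$ transfers to the filtration parameter $c$ of the space $\c O_p(b;c)$.

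First I would write out
\[
\alpha_1(e_i) = \sum_{u \in \bb Z^n} A(u,i) \gamma^{w(u)} t^{pm(u)} x^u, \qquad A(u,i) = \sum_{r \geq 0} a_r(u,i) \gamma^{r(n+1)/p} t^r,
\]
where by Lemma \ref{L: chain level est} we have $\ord_p a_r(u,i) \geq \frac{(n+1)r}{p} + w(u)$. Each basic monomial
\[
\mu_{r,u} := \gamma^{r(n+1)/p + w(u)} t^r \cdot t^{pm(u)} x^u
\]
is an orthonormal generator of $F^{(r(n+1)/p,\, w(u))}(\c C_p)$. Theorem \ref{T: main rel cohom reduction}, applied with $k=1$, $d = r(n+1)/p$, and $i = w(u)$, then produces
\[
\mu_{r,u} \equiv \sum_{l=0}^n C(l,\mu_{r,u})\, e_l \pmod{D_{t^p}}, \qquad C(l,\mu_{r,u}) \in \c O_p\Bigl(\tfrac{n+1}{p};\, l - w(u) - \tfrac{(n+1)r}{p}\Bigr).
\]
Inspection of the proof of Theorem \ref{T: main rel cohom reduction} shows that the reduction procedure (iterating (\ref{E: D relation}) and then splitting each piece via (\ref{E: D1 decomp})) is $\f O_q$-linear in its input. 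In particular, the reductions of the rescaled monomials $a_r(u,i) \mu_{r,u}$ can be chosen with $C(l,\, a_r(u,i) \mu_{r,u}) = a_r(u,i) C(l, \mu_{r,u})$.

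Next I would use the straightforward fact that if $\lambda \in \f O_q$ satisfies $\ord_p \lambda \geq c_2$ and $\xi \in \c O_p(b;c_1)$, then $\lambda \xi \in \c O_p(b; c_1 + c_2)$. Combining this with the above estimate and $\ord_p a_r(u,i) \geq \frac{(n+1)r}{p} + w(u)$ gives
\[
a_r(u,i)\, C(l,\mu_{r,u}) \in \c O_p\Bigl(\tfrac{n+1}{p};\, l - w(u) - \tfrac{(n+1)r}{p} + \tfrac{(n+1)r}{p} + w(u)\Bigr) = \c O_p\Bigl(\tfrac{n+1}{p};\, l\Bigr),
\]
so every term contributes a piece living in the desired space. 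Summing over $r \geq 0$ and $u \in \bb Z^n$, and noting that the sums converge in $\c O_p(\frac{n+1}{p}; j)$ because the contributions with large $w(u)$ or large $r$ have correspondingly large $p$-adic valuation (again from Lemma \ref{L: chain level est}), one obtains $\tilde A(j,i) = \sum_{u,r} a_r(u,i) C(j, \mu_{r,u}) \in \c O_p(\frac{n+1}{p}; j)$.

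The main obstacle is the bookkeeping step: verifying that the reduction procedure of Theorem \ref{T: main rel cohom reduction} can indeed be taken $\f O_q$-linearly and that the resulting double sum over $(r,u)$ is convergent in the Banach sense within $\c O_p(\frac{n+1}{p}; j)$. Linearity follows because (\ref{E: D relation}) and (\ref{E: D1 decomp}) are themselves linear decompositions built from the splittings of Lemma \ref{T: main rel cohom reduction}'s proof; convergence is controlled by the growth estimate on $A(u,i)$ together with the fact that the shift $l - w(u) - r(n+1)/p$ is only used before rescaling by $a_r(u,i)$, so the cancellation is exact. Once these points are secured, the proof is complete.
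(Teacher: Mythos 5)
Your proof is correct and takes essentially the same route as the paper: combine the chain-level estimate of Lemma \ref{L: chain level est} with the reduction estimate of Theorem \ref{T: main rel cohom reduction} and add the exponents, the sums converging because $\ord_p$ of the coefficients grows with $r$ and $w(u)$. The only cosmetic difference is that the paper applies Theorem \ref{T: main rel cohom reduction} with $d=0$ to the monomials $\gamma^{w(u)}t^{pm(u)}x^u$ and then uses $\c O_p$-linearity, writing $\tilde A(j,i)=\sum_u A(u,i)C(j,u)$ and invoking $\c O_p(\tfrac{n+1}{p};w(u))\cdot\c O_p(\tfrac{n+1}{p};j-w(u))\subseteq\c O_p(\tfrac{n+1}{p};j)$, whereas you expand $A(u,i)$ into individual $t$-monomials and apply the theorem with $d=r(n+1)/p$; both bookkeepings give the same estimate.
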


\begin{proof}
Since $\gamma^{w(u)} t^{p m(u)} x^u \in \Fil^{(0, w(u))} \c C_p$, by Theorem \ref{T: main rel cohom reduction} there exist $C(j, u) \in \c O_p(\frac{n+1}{p}; j - w(u))$ such that 
\[
\gamma^{w(u)} t^{p m(u)} x^u = \sum_{j = 0}^n C(j, u) e_j \qquad \mod D_{t^p}.
\] 
It follows from (\ref{E: Frob on ei}) and Lemma \ref{L: chain level est} that $\tilde A(j, i) = \sum_{u \in \bb Z^n} A(u, i) C(j, u) \in  \c O_p(\frac{n+1}{p}; j)$.
\end{proof}

For future reference, we record here the following.

\begin{corollary}
$\bar \alpha_1(e_0) \equiv e_0 \mod (t, \gamma^{n+1})$. 
\end{corollary}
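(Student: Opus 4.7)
The plan is to combine Theorem \ref{T: tilde A matrix} with a direct chain-level computation of the $u=0$ contribution to $\bar\alpha_1(e_0)$. Writing $\bar\alpha_1(e_0) = \sum_{j=0}^n \tilde A(j,0)\,e_j$ with $\tilde A(j,0) \in \c O_p(\tfrac{n+1}{p}; j)$, expand $\tilde A(j,0) = a_j(0) + \sum_{r\geq 1} a_j(r)\,\gamma^{r(n+1)/p} t^r$. The key numerical observation is that for $r \geq 1$, the coefficient of $t^r$, as an element of $\f O_q$, has $p$-adic valuation at least $r(n+1)/p + j + r(n+1)/[p(p-1)] = r(n+1)/(p-1) + j \geq (n+1)/(p-1) = \ord_p \gamma^{n+1}$. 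Hence the entire $r \geq 1$ tail of $\tilde A(j,0)$ lies in $t\c O_p \cap \gamma^{n+1}\c O_p \subseteq (t,\gamma^{n+1})\c O_p$, and the corollary reduces to showing $a_0(0) \equiv 1 \pmod{\gamma^{n+1}}$ and $a_j(0) \equiv 0 \pmod{\gamma^{n+1}}$ for $j \geq 1$.

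To compute the constant-in-$t$ part, I would use the chain-level identity $\alpha_1(e_0) = \psi_x(F(t,x))$ with $F(t,x) = \theta(x_1)\cdots\theta(x_n)\,\theta(t/(x_1\cdots x_n))$. Setting $t=0$ and applying $\psi_x$ gives $\alpha_1(e_0)|_{t=0} = \sum_{u \in \bb Z_{\geq 0}^n} \theta_{pu_1}\cdots\theta_{pu_n}\,x^u$. The $u=0$ term is $\theta_0^n = 1$, which provides the leading $e_0$ contribution. For $u \neq 0$, the coefficient $\theta_{pu_1}\cdots\theta_{pu_n}$ satisfies $\ord_p \geq p|u|/(p-1)$, so whenever $p|u| \geq n+1$ this coefficient already lies in $\gamma^{n+1} \f O_q$ and contributes only to $\gamma^{n+1}\c C_p$. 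The remaining small-$|u|$ terms (with $1 \leq |u| < (n+1)/p$) then need to be reduced to the basis $\{e_j\}$ via Theorem \ref{T: main rel cohom reduction}, using $\gamma^{|u|} x^u = \sum_j C(j,u) e_j$ with $C(j,u) \in \c O_p(\tfrac{n+1}{p}; j-|u|)$.

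The main obstacle is establishing the sharper vanishing modulo $\gamma^{n+1}$ for these small-$|u|$ terms, since the generic estimate $\ord_p a_j(0) \geq j$ from Theorem \ref{T: tilde A matrix} is by itself insufficient when $j(p-1) < n+1$. My plan is to work first with the leading differential $D_{t^p,l}^{(1)} = x_l\partial/\partial x_l + \gamma K_l(t^p,x)$, for which $D_{t^p,l}^{(1)}(1) = \gamma(x_l - t^p/(x_1\cdots x_n))$ gives $\gamma x_l \equiv 0 \pmod{(t, D^{(1)})}$, and iteratively show that each non-standard monomial reduces in the $D^{(1)}$-cohomology to boundaries plus terms of higher $\gamma$-valuation. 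The passage from $D^{(1)}$ to the true $D$ is controlled by the correction $\sum_{i\geq 1}\gamma_i p^i K_l(t^{p^{i+1}}, x^{p^i})$, whose coefficients have $\ord_p \geq p^2/(p-1) - 1$; this is large enough to be absorbed into $\gamma^{n+1}$ under the hypotheses implicit in the setup, so the $D^{(1)}$-reduction carries over to the full $D$-cohomology modulo $(t, \gamma^{n+1})$, yielding the desired congruence.
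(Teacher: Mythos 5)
Your first two paragraphs are, in substance, the paper's own proof: the paper disposes of the entries $\tilde A(j,0)$ with $j \neq 0$ by Theorem \ref{T: tilde A matrix}, and treats $\tilde A(0,0)$ exactly as you do, via $\tilde A(0,0) = A(0,0) + \sum_{u \neq 0} A(u,0)C(0,u)$ with $\ord_p A(u,0)C(0,u) \geq w(u) \geq 1$ together with the chain-level identity $A(0,0) = B(0) = 1 + \sum_{l \geq 1}\theta_l^{n+1} t^l$. The paper stops at that level of precision (divisibility by $t$ plus positive valuation, which yields literal $\gamma^{n+1}$-divisibility once $(n+1)/(p-1) \leq 1$, i.e. $p \geq n+2$); it does not attempt the finer analysis you undertake for small $p$.

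That finer analysis, your third paragraph, is where the proposal has a genuine gap. First, the claim that the discrepancy between $D_{t^p,l}$ and $D^{(1)}_{t^p,l}$ is ``absorbed into $\gamma^{n+1}$'' needs $\frac{p^2}{p-1} - 1 \geq \frac{n+1}{p-1}$, i.e. $p^2 - p + 1 \geq n+1$; nothing in the paper's setup supplies such a hypothesis, so for $p$ small relative to $n$ this step fails outright. Second, the proposed reduction ``mod $(t, D^{(1)})$'' proves the wrong statement: the entries $\tilde A(j,0)$ are defined by reducing over $\c O_p$, where $e_1 = \gamma x_1$ is a basis vector and not a boundary, and only afterwards specializing $t = 0$; in the quotient by $(t, D^{(1)})$ the relation $D^{(1)}_1(1) = \gamma(x_1 - t^p/(x_1\cdots x_n))$ makes $e_1$ itself vanish, so a congruence proved there is strictly weaker than the asserted congruence in $\bigoplus_j \c O_p e_j$ modulo $(t, \gamma^{n+1})$. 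Third, the hoped-for term-by-term ``sharper vanishing'' is false: the monomial $x_1$ (that is, $u = (1,0,\ldots,0)$) occurs in $\psi_x F|_{t=0}$ with coefficient $\theta_p$, and since $\theta_p x_1 = (\theta_p\gamma^{-1})e_1$ identically, it contributes $\theta_p\gamma^{-1}$ to the $e_1$-coefficient; for $p = 2$ this equals $\gamma$, of valuation $1 < n+1 = \ord_p\gamma^{n+1}$, so any congruence at the precision you aim for could only come from cancellation among the contributions of different $u$, which your plan does not identify. If you are content with the precision actually furnished by Theorem \ref{T: tilde A matrix} and the estimate $\ord_p A(u,0)C(0,u) \geq w(u)$ --- which is all the paper's own proof uses --- then your first two paragraphs already suffice and the third is unnecessary.
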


\begin{proof}
By Theorem \ref{T: tilde A matrix}, $\ord_p \tilde A(j, i) \geq 1$ for $j \not= 0$. Thus, we need only focus on $\tilde A(0, 0)$, and in this case, since $C(0, 0) = 1$,
\[
\tilde A(0, 0) = A(0, 0) + \sum_{u \in \bb Z^n, u \not= 0} A(u, i) C(j, u).
\]
As $u \not= 0$, $\ord_p A(u, i) C(j, u) \geq w(u) \geq 1$. The result follows since
\[
A(0, 0) = B(0) = 1 + \sum_{l \geq 1} \theta_l^{n+1} t^l.
\]
\end{proof}

\subsection{Estimates on the symmetric power Frobenius}

Let $\c S_{k, p^r} := \Sym^k_{\c O_{p^r}} H^n( \Omega^\bullet(\c C_{p^r}, D_{t^{p^r}}))$,  a free $\c O_{p^r}$-module of rank $\binom{n+k}{n}$ with basis $\Sym^k \c B := \{ e^\ui : |\ui| = k\}$ where $\ui = (i_0, \ldots, i_n) \in \bb Z_{\geq 0}^{n+1}$. To ease notation, define $I_k := \{ \ui = (i_0, \ldots, i_n) \in \bb Z_{\geq 0}^{n+1} \mid |\ui| := i_0 + \cdots + i_n = k \}$. For each $\ui \in I_k$, write
\begin{equation}\label{E: Sym alpha}
\Sym^k \bar \alpha_1(e^\ui) = \sum_{\uj \in I_k} \tilde A^{(k)}(\uj, \ui) e^\uj
\end{equation}
where $\tilde A^{(k)}(\uj, \ui) \in \c O_{p}$. 

\begin{lemma}\label{L: Sym Frob}
For every $\ui, \uj \in I_k$, $\tilde A^{(k)}(\uj, \ui) \in \c O_p(\frac{n+1}{p}; w(\uj))$.
\end{lemma}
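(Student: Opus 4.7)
The plan is to unwind the definition of the symmetric power Frobenius on monomials and reduce the estimate to a direct application of Theorem \ref{T: tilde A matrix}, using that the spaces $\c O_p(b; c)$ behave well under multiplication.

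First, by definition of $\Sym^k \bar \alpha_1$ on a monomial $e^{\ui} = e_0^{i_0} \cdots e_n^{i_n}$, we have
\[
\Sym^k \bar \alpha_1(e^{\ui}) = \prod_{l=0}^n (\bar \alpha_1 e_l)^{i_l} = \prod_{l=0}^n \left( \sum_{m=0}^n \tilde A(m, l) e_m \right)^{i_l}
\]
in $\c S_{k, p}$. Expanding each factor by the multinomial theorem produces a sum indexed by arrays $(c_{l,m})_{0 \leq l, m \leq n}$ of non-negative integers satisfying $\sum_m c_{l,m} = i_l$ for each $l$, giving
\[
\Sym^k \bar \alpha_1(e^{\ui}) = \sum_{(c_{l,m})} \left( \prod_{l=0}^n \binom{i_l}{c_{l,0}, \ldots, c_{l,n}} \right) \prod_{l,m} \tilde A(m, l)^{c_{l,m}} \cdot \prod_{m=0}^n e_m^{\sum_l c_{l,m}}.
\]
Grouping terms by the total exponent $j_m := \sum_l c_{l,m}$ and setting $\uj = (j_0, \ldots, j_n)$ (which satisfies $|\uj| = k$), we identify
\[
\tilde A^{(k)}(\uj, \ui) = \sum_{(c_{l,m}) : \sum_m c_{l,m} = i_l, \, \sum_l c_{l,m} = j_m} \left( \prod_l \binom{i_l}{c_{l,0}, \ldots, c_{l,n}} \right) \prod_{l, m} \tilde A(m, l)^{c_{l,m}}.
\]

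Next, I use the key multiplicativity property of the filtration: a direct computation on power series shows that if $\xi \in \c O_p(b; c)$ and $\eta \in \c O_p(b; c')$, then $\xi \eta \in \c O_p(b; c + c')$ (matching $r$-coefficients and applying the triangle inequality on $\ord_p$). Combined with Theorem \ref{T: tilde A matrix}, which gives $\tilde A(m, l) \in \c O_p\!\left( \frac{n+1}{p}; m \right)$ for every $m, l$, each product in the sum above satisfies
\[
\prod_{l, m} \tilde A(m, l)^{c_{l,m}} \in \c O_p\!\left( \frac{n+1}{p}; \sum_{l,m} m \, c_{l,m} \right) = \c O_p\!\left( \frac{n+1}{p}; \sum_{m} m \, j_m \right) = \c O_p\!\left( \frac{n+1}{p}; w(\uj) \right),
\]
where I used $\sum_l c_{l,m} = j_m$ in the first equality. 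The multinomial coefficients are integers, hence in $\f O_q$, so each summand lies in $\c O_p(\frac{n+1}{p}; w(\uj))$, and therefore so does the finite sum $\tilde A^{(k)}(\uj, \ui)$.

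This argument is purely combinatorial and multiplicative; the only substantive input is the chain-level estimate from Theorem \ref{T: tilde A matrix}. The only mild subtlety to check is the multiplicativity of $\c O_p(b; c)$ under the given product structure, but this is immediate from the definition of the norms. No genuine obstacle is expected.
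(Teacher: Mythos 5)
Your proposal is correct and takes essentially the same route as the paper: the paper proves the identical estimate by induction on $k$, peeling off one factor $\bar\alpha_1(e_m)$ at a time, which is just the recursive form of your one-shot multinomial expansion. Both arguments rest on exactly the same two inputs, namely Theorem \ref{T: tilde A matrix} and the additivity $\c O_p(\tfrac{n+1}{p}; c) \cdot \c O_p(\tfrac{n+1}{p}; c') \subseteq \c O_p(\tfrac{n+1}{p}; c + c')$ together with $\sum_{l,m} m\, c_{l,m} = w(\uj)$.
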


\begin{proof}
We proceed by induction on $k$, the case $k = 1$ being Theorem \ref{T: tilde A matrix}. Consider $e_0^{i_0} \cdots e_m^{i_m}$ such that $0 \leq m \leq n$ and $i_m \geq 1$, and set $\ui := (i_0, \ldots, i_m, 0, \ldots, 0)$. Also, set $\ui' := (i_0, \ldots, i_{m-1}, i_m - 1, 0, \ldots, 0)$, then
\[
\Sym^{k-1} \bar \alpha_1(e^{\ui'}) = \sum_{\uj' \in I_{k-1}} \tilde A^{(k-1)}(\uj', \ui') e^{\uj'}
\]
satisfies the induction hypothesis. Now,
\begin{align*}
\Sym^k  \bar \alpha_1(e^\ui) &= \Sym^{k-1} \bar \alpha_1(e^{\ui'}) \otimes \bar \alpha_1(e_m) \\
&= \left(  \sum_{\uj' \in I_{k-1}} \tilde A^{(k-1)}(\uj', \ui') e^{\uj'} \right) \cdot \left( \sum_{l=0}^n \tilde A(l, m) e_l \right) \\
&= \sum_{0 \leq l \leq n \text{ and } \uj' \in I_{k-1}} \tilde A^{(k-1)}(\uj', \ui') \tilde A(l,m) e_0^{j_0'} \cdots e_l^{j_l' + 1} \cdots e_n^{j_n'} \\
&= \sum_{\uj \in I_k} \tilde A^{(k)}(\uj, \ui) e^{\uj}
\end{align*}
where
\[
\tilde A^{(k)}(\uj, \ui) = \sum_{\substack{0 \leq l \leq n \text{ and } \uj' \in I_{k-1} \\ (j_0', \ldots, j_l' + 1, \ldots, j_n') = \uj}} \tilde A^{(k-1)}(\uj', \ui') \tilde A(l,m) 
\]
Under the condition in the last summation, note that $w(\uj') + l = w(\uj)$. Thus, by the induction hypothesis and Theorem \ref{T: tilde A matrix},
\[
\tilde A^{(k-1)}(\uj', \ui') \tilde A(l,m)  \in \c O_p(\tfrac{n+1}{p}; w(\uj')) \cdot \c O_p(\tfrac{n+1}{p}; l) \subseteq \c O_p(\tfrac{n+1}{p}; w(\uj))
\]
proving the assertion. 
\end{proof}

Assume now that $d_k(n, p) = 0$. Set $\beta_{k, 1} := \psi_t \circ \Sym^k(\bar \alpha_1): \c S_{k, t} \rightarrow \c S_{k, t}$, and since $\beta_{k, 1} \partial_t = p \partial_t \beta_{k, 1}$ there is an induced map $\bar \beta_{k, 1}$ on $H^1(\c S_{k, t}, \partial_t)$. For each $\ui, \uj \in B_k$ (where $B_k$ is the constant basis from Theorem \ref{T: constant basis}), there exist $B(\uj, \ui) \in \f O_q$ such that
\[
\bar \beta_{k, 1}(e^\ui) = \sum_{\uj \in B_k} B(\uj, \ui) e^{\uj}.
\]

\begin{theorem}\label{T: Sym Frob Estimates}
Suppose that $d_k(n, p) = 0$. For each $\ui, \uj \in B_k$, we have $\ord_p B(\uj, \ui) \geq w(\uj)$.
\end{theorem}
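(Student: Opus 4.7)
The plan is to assemble three ingredients: the chain-level Frobenius estimate from Lemma \ref{L: Sym Frob}, the action of $\psi_t$ on the growth condition, and then the basis-reduction estimate of Theorem \ref{T: sym power cohom}, applied monomial-by-monomial to the resulting $t$-expansion.

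First, from Lemma \ref{L: Sym Frob}, in $\c S_{k, t^p}$,
\[
\Sym^k \bar\alpha_1(e^\ui) \;=\; \sum_{\uj \in I_k} \tilde A^{(k)}(\uj, \ui)\, e^\uj, \qquad \tilde A^{(k)}(\uj, \ui) \in \c O_p(\tfrac{n+1}{p};\, w(\uj)).
\]
Since the basis elements $e_i = \gamma^i x_1 \cdots x_i$ are independent of $t$, the map $\psi_t: \c S_{k, t^p} \to \c S_{k, t}$ acts only on scalar coefficients, and a direct computation (writing $\tilde A^{(k)}(\uj, \ui) = \sum_r a(r) \gamma^{r(n+1)/p} t^r$ and applying $\sum a(r)t^r \mapsto \sum a(pr) t^r$) shows
\[
\psi_t(\tilde A^{(k)}(\uj, \ui)) \;=\; \sum_{r \geq 0} b_{\uj, \ui}(r)\, \gamma^{(n+1)r} t^r \;\in\; \c O(n+1;\, w(\uj)),
\]
i.e.\ $\ord_p b_{\uj, \ui}(r) \geq (n+1)r + w(\uj)$. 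Therefore in $\c S_{k, t}$,
\[
\beta_{k, 1}(e^\ui) \;=\; \sum_{\uj \in I_k} \sum_{r \geq 0} b_{\uj, \ui}(r)\, \gamma^{(n+1)r} t^r e^\uj.
\]

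Next, each monomial $\gamma^{(n+1)r} t^r e^\uj$ lies in $\Fil^{(n+1)r + w(\uj)} \c S_{k, t}$, so Theorem \ref{T: sym power cohom} applies to give
\[
\gamma^{(n+1)r} t^r e^\uj \;\equiv\; \sum_{\uk \in B_k} C_{r, \uj}(\uk)\, e^\uk \pmod{\partial_t}, \qquad C_{r, \uj}(\uk) \in \f O_q,
\]
with $\ord_p C_{r, \uj}(\uk) \geq w(\uk) - (n+1)r - w(\uj)$. Using $\f O_q$-linearity of the reduction, I will obtain
\[
B(\uk, \ui) \;=\; \sum_{\uj \in I_k} \sum_{r \geq 0} b_{\uj, \ui}(r)\, C_{r, \uj}(\uk),
\]
and the multiplicative estimate is clean: for each $(\uj,r)$,
\[
\ord_p\!\bigl(b_{\uj, \ui}(r)\, C_{r, \uj}(\uk)\bigr) \;\geq\; \bigl((n+1)r + w(\uj)\bigr) + \bigl(w(\uk) - (n+1)r - w(\uj)\bigr) \;=\; w(\uk).
\]

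The main obstacle, and the one point that needs care, is justifying the term-by-term decomposition above, since $\beta_{k, 1}(e^\ui)$ lies in no single $\Fil^N$ while Theorem \ref{T: sym power cohom} is formulated for elements of fixed finite weight. I resolve this by observing that convergence of the inner $r$-sum in $\f O_q$ follows from combining the non-tight bound $\ord_p b_{\uj, \ui}(r) \geq (n+1)r + w(\uj)\to\infty$ with the trivial bound $\ord_p C_{r, \uj}(\uk) \geq 0$ (since $C_{r,\uj}(\uk)\in\f O_q$); this forces the summands to tend $p$-adically to zero, so the series converges and, being bounded below termwise by $w(\uk)$, its limit satisfies $\ord_p B(\uk, \ui) \geq w(\uk)$, as required. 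The outer sum over $\uj\in I_k$ is finite and poses no additional issue.
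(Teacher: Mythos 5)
Your proposal is correct and follows essentially the same route as the paper: expand $\Sym^k \bar\alpha_1(e^\ui)$ via Lemma \ref{L: Sym Frob}, note that $\psi_t$ converts the growth condition $\c O_p(\frac{n+1}{p}; w(\uj))$ into $\ord_p \geq (n+1)r + w(\uj)$ on the coefficients of $\gamma^{(n+1)r}t^r$, reduce each monomial $\gamma^{(n+1)r}t^re^\uj$ to the basis $B_k$ via Theorem \ref{T: sym power cohom}, and add the two estimates, which telescope to $w(\uk)$. Your extra remark justifying term-by-term reduction and convergence of the sum over $r$ is a point the paper leaves implicit, but it is handled correctly and does not change the argument.
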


\begin{proof}
From (\ref{E: Sym alpha}) and Lemma \ref{L: Sym Frob}, we have $\Sym^k \bar \alpha_1(e^\ui) = \sum_{\uu \in I_k} \tilde A^{(k)}(\uu, \ui) e^\uu$ with $\tilde A^{(k)}(\uu, \ui) \in \c O_p(\frac{n+1}{p}; w(\uu))$. Writing $\tilde A^{(k)}(\uu, \ui) = \sum_{l \geq 0} \tilde A^{(k)}(\uu, \ui; l) \gamma^{(n+1)(l/p)} t^l$, then
\[
\beta_{k, 1}( e^\ui ) = \sum_{l \geq 0, \uu \in I_k} \tilde A^{(k)}(\uu, \ui; pl) \gamma^{(n+1)l} t^l e^\uu.
\]
Since $\gamma^{(n+1)l} t^l e^\uu \in \Fil^{W(l, \uu)} \c S_k$, where $W(l, \uu) = (n+1)l + w(\uu)$, by Theorem \ref{T: sym power cohom}, there exist $C(\uj; l, \uu) \in \f O_q$ satisfying $\ord_p C(\uj; l, \uu) \geq w(\uj) - (n+1)l - w(\uu)$ such that $\gamma^{(n+1)l} t^l e^\uu = \sum_{\uj \in B_k} C(\uj; l, \uu) e^{\uj}$ modulo $\partial_t$, and so $\beta_{k, 1}( e^\ui) = \sum_{\uj \in B_k} B(\uj, \ui) e^\uj$ 
with
\begin{equation}\label{E: B sum}
B(\uj, \ui)  = \sum_{l \geq 0, \uu \in I_k} \tilde A^{(k)}(\uu, \ui; pl) C(\uj; l, \uu).
\end{equation}
The result follows from
\[
\ord_p \tilde A^{(k)}(\uu, \ui; pl) C(\uj; l, \uu)  \geq \left(\tfrac{n+1}{p}\right)(pl) + w(\uu) + \left(w(\uj) - (n+1)l - w(\uu) \right) = w(\uj).
\]
\end{proof}

Recall $R(T) := \sum_{i_0 + \cdots + i_n = k} T^{i_1 + 2 i_2 + \cdots + n i_n}$. Write $R(T) / (1 + T + \cdots + T^n) = \sum_{i=0}^\infty h_i T^i$. Define the Hodge polygon as the lower convex hull of the points $(0, 0)$ and $( \sum_{i = 0}^N h_i, \sum_{i = 0}^N i h_i )$ for $N = 0, 1, 2, \ldots$. Note, this is the same as the $q$-adic Newton polygon of $\prod (1 - q^i T)^{h_i}$.

\begin{theorem}\label{T: NP over HP}
Suppose $d_k(n, p) = 0$. The $q$-adic Newton polygon of $L(Sym^k Kl_n / \bb F_q, T)$ lies on or above the Hodge polygon.
\end{theorem}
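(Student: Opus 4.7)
The plan is to assemble the results already established in Sections~\ref{S: cohom formula}--\ref{S: Sym Frob}. Under the hypothesis $d_k(n,p) = 0$, Theorem~\ref{T: sym power cohom} gives $H^0(\SkH, \partial) = 0$, so the cohomological formula of Theorem~\ref{T: cohom formula} yields
\[
L(\Sym^k \Kl_n / \bb F_q, T) = \det(1 - \bar\beta_k T \mid H^1),
\]
a polynomial of degree $\frac{1}{n+1}\binom{n+k}{n}$. Its reciprocal roots are thus the eigenvalues of $\bar\beta_k$ acting on the free $\f O_q$-module $H^1$, which carries the basis $\{e^\ui\}_{\ui \in B_k}$ produced by Theorem~\ref{T: constant basis}.

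The next step is to pin down the weight distribution of $B_k$. The graded Poincar\'e series of $H^1(\oSkH, \nabla_{\oG})$ computed in the lead-up to Corollary~\ref{C: nabla dim} equals $R(T)/(1 + T + \cdots + T^n) = \sum_{i \geq 0} h_i T^i$; since Theorem~\ref{T: constant basis} realises a basis by monomials $\bar e^\ui$ of weight $w(\ui)$, the number of $\ui \in B_k$ with $w(\ui) = i$ is exactly $h_i$. Consequently the sorted sequence of weights $w(\uj_{(1)}) \leq w(\uj_{(2)}) \leq \cdots$ has partial sums $\sum_{i=1}^\ell w(\uj_{(i)})$ tracing out precisely the Hodge vertices $(\sum_{j=0}^N h_j, \sum_{j=0}^N j h_j)$.

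The central $p$-adic input is Theorem~\ref{T: Sym Frob Estimates}: in the basis $\{e^\ui\}_{\ui \in B_k}$ the matrix $B$ of $\bar\beta_{k,1}$ satisfies $\ord_p B(\uj, \ui) \geq w(\uj)$. I would then invoke the standard determinant estimate: the coefficient of $T^\ell$ in $\det(1 - BT)$ is a signed sum of $\ell \times \ell$ principal minors indexed by subsets $S \subseteq B_k$ of size $\ell$, and each minor expands as a signed sum of products of $\ell$ entries from distinct rows of $B$. Every such product has $p$-adic valuation at least $\sum_{\uj \in S} w(\uj)$, so minimising over $S$ shows that the $p$-adic Newton polygon of $\det(1 - \bar\beta_{k,1}T)$ lies on or above the polygon through the points $(\sum_{j=0}^N h_j, \sum_{j=0}^N j h_j)$.

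Finally I pass from $\bar\beta_{k,1}$ to $\bar\beta_k$. Using $\beta_k = \psi_t^a \circ \Sym^k \bar\alpha_a$, $\bar\alpha_a = \bar\alpha_1(t^{p^{a-1}}) \cdots \bar\alpha_1(t)$, and the Dwork-theoretic compatibility $\Sym^k \bar\alpha_1(t) \circ \psi_t = \psi_t \circ \Sym^k \bar\alpha_1(t^p)$ (obtained by moving $\psi_t$ past the Frobenius $\psi_x \circ F(t,x)$), one rewrites $\beta_{k,1}^a$ iteratively as $\beta_k$ and concludes that $\bar\beta_k = \bar\beta_{k,1}^a$ on $H^1$. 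The eigenvalues of $\bar\beta_k$ are therefore the $a$-th powers of those of $\bar\beta_{k,1}$, so their $q$-adic valuations coincide with the $p$-adic valuations of the eigenvalues of $\bar\beta_{k,1}$; combined with the preceding paragraph this places the $q$-adic Newton polygon of $L(\Sym^k \Kl_n / \bb F_q, T)$ on or above the Hodge polygon. \emph{The main obstacle} is the identification $\bar\beta_k = \bar\beta_{k,1}^a$: one must verify the compatibility of $\psi_t$ with each $\bar\alpha_1(t^{p^j})$ while respecting the growth conditions defining $\c O_{p^j}$, and ensure the identity descends to the induced operators on $H^1$.
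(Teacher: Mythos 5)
Your proposal is correct and takes essentially the same route as the paper: the paper's proof likewise verifies $\beta_{k,1}^a = \beta_k$ (hence $\bar\beta_k = \bar\beta_{k,1}^a$) and then concludes from Theorem \ref{T: Sym Frob Estimates} via Dwork's standard minor/determinant argument, which you simply spell out. The identification of the weight multiplicities $\#\{\ui \in B_k : w(\ui) = i\} = h_i$ that you make explicit is the same step the paper leaves implicit when it cites Dwork's argument.
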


\begin{proof}
Observe that
\begin{align*}
\beta_{k, 1}^a &= \left( \psi_t \circ Sym^k \bar \alpha_1(t) \right) \circ \cdots \circ \left( \psi_t \circ Sym^k \bar \alpha_1(t) \right) \\
&= \psi_t^a \circ Sym^k \left( \bar \alpha_1(t^{p^{a-1}}) \bar \alpha_1(t^{p^{a-2}}) \cdots \bar \alpha_1(t) \right) \\
&= \psi_t^a \circ Sym^k \bar \alpha_a \\
&= \beta_k,
\end{align*}
and so $\bar \beta_{k, 1}^a = \bar \beta_k$. The result now follows from Theorem \ref{T: Sym Frob Estimates} and Dwork's argument \cite[Section 7]{MR0188215}. 
\end{proof}

\bibliographystyle{amsplain}
\bibliography{../../References/References}

\providecommand{\bysame}{\leavevmode\hbox to3em{\hrulefill}\thinspace}
\providecommand{\MR}{\relax\ifhmode\unskip\space\fi MR }
\providecommand{\MRhref}[2]{%
  \href{http://www.ams.org/mathscinet-getitem?mr=#1}{#2}
}
\providecommand{\href}[2]{#2}
\begin{thebibliography}{10}

\bibitem{Adolphson-$p$-adictheoryof-1976}
Alan Adolphson, \emph{A {$p$}-adic theory of {H}ecke polynomials}, Duke Math.
  J. \textbf{43} (1976), no.~1, 115--145. \MR{491508}

\bibitem{AS-hyperKloo-revisited}
Alan Adolphson and Steven Sperber, \emph{Hyperkloosterman sums revisited},
  Journal of Number Theory (2022).

\bibitem{MR463174}
P.~Deligne, \emph{Cohomologie \'{e}tale}, Lecture Notes in Mathematics, vol.
  569, Springer-Verlag, Berlin, 1977, S\'{e}minaire de g\'{e}om\'{e}trie
  alg\'{e}brique du Bois-Marie SGA $4\frac{1}{2}$. \MR{463174}

\bibitem{MR0188215}
Bernard Dwork, \emph{On the zeta function of a hypersurface. {II}}, Ann. of
  Math. (2) \textbf{80} (1964), 227--299. \MR{0188215 (32 \#5654)}

\bibitem{Dwork-Heckepolynomials-1971}
\bysame, \emph{On {H}ecke polynomials}, Inventiones math. \textbf{12} (1971),
  249--256.

\bibitem{Fresan2018HodgeTO}
Javier Fres{\'a}n, Claude Sabbah, and Jeng-Daw Yu, \emph{{H}odge theory of
  {K}loosterman connections}, arXiv: 1810.06454 (2020).

\bibitem{MR2194679}
Lei Fu and Daqing Wan, \emph{{$L$}-functions for symmetric products of
  {K}loosterman sums}, J. Reine Angew. Math. \textbf{589} (2005), 79--103.
  \MR{2194679}

\bibitem{FuWan-$L$-functionssymmetricproducts-2005}
\bysame, \emph{{$L$}-functions of symmetric products of the {K}loosterman sheaf
  over {${\bf Z}$}}, Math. Ann. \textbf{342} (2008), no.~2, 387--404.
  \MR{2425148}

\bibitem{MR2425148}
\bysame, \emph{{$L$}-functions of symmetric products of the {K}loosterman sheaf
  over {${\bf Z}$}}, Math. Ann. \textbf{342} (2008), no.~2, 387--404.
  \MR{2425148 (2009i:14022)}

\bibitem{FuWan-TrivialfactorsL-functions-}
\bysame, \emph{Trivial factors for {$L$}-functions of symmetric products of
  {K}loosterman sheaves}, Finite Fields Appl. \textbf{14} (2008), no.~2,
  549--570. \MR{2401995}

\bibitem{Haessig-EllCurve}
C.~Douglas Haessig, \emph{{D}work cohomology and {H}ecke polynomials}, In
  preparation.

\bibitem{MR2542216}
\bysame, \emph{{$L$}-functions of symmetric powers of cubic exponential sums},
  J. Reine Angew. Math. \textbf{631} (2009), 1--57. \MR{2542216 (2010g:11132)}

\bibitem{MR3694643}
\bysame, \emph{{$L$}-functions of symmetric powers of {K}loosterman sums (unit
  root {$L$}-functions and {$p$}-adic estimates)}, Math. Ann. \textbf{369}
  (2017), no.~1-2, 17--47. \MR{3694643}

\bibitem{MR4314031}
\bysame, \emph{Kloosterman sums and {H}ecke polynomials in characteristics 2
  and 3}, Finite Fields Appl. \textbf{76} (2021), Paper No. 101896, 14.
  \MR{4314031}

\bibitem{MR2873140}
C.~Douglas Haessig and Antonio Rojas-Le\'{o}n, \emph{{$L$}-functions of
  symmetric powers of the generalized {A}iry family of exponential sums}, Int.
  J. Number Theory \textbf{7} (2011), no.~8, 2019--2064. \MR{2873140}

\bibitem{MR3239170}
C.~Douglas Haessig and Steven Sperber, \emph{{$L$}-functions associated with
  families of toric exponential sums}, J. Number Theory \textbf{144} (2014),
  422--473. \MR{3239170}

\bibitem{MR3572279}
\bysame, \emph{Symmetric power {$L$}-functions for families of generalized
  {K}loosterman sums}, Trans. Amer. Math. Soc. \textbf{369} (2017), no.~2,
  1459--1493. \MR{3572279}

\bibitem{Robba-SymmetricPowersof-1986}
Philippe Robba, \emph{Symmetric powers of the {$p$}-adic {B}essel equation}, J.
  Reine Angew. Math. \textbf{366} (1986), 194--220. \MR{833018}

\bibitem{MR2385246}
Antonio Rojas-Leon and Daqing Wan, \emph{Moment zeta functions for toric
  {C}alabi-{Y}au hypersurfaces}, Commun. Number Theory Phys. \textbf{1} (2007),
  no.~3, 539--578. \MR{2385246}

\bibitem{SabbahYu-Airy}
{C}laude {S}abbah and {J}eng-{D}aw {Y}u, \emph{{H}odge properties of {A}iry
  moments}, arXiv: 2112.13405 (2021).

\bibitem{MR558257}
Steven Sperber, \emph{Congruence properties of the hyper-{K}loosterman sum},
  Compositio Math. \textbf{40} (1980), no.~1, 3--33. \MR{558257}

\end{thebibliography}

\end{document}